\newtheorem{theorem}{Theorem}
\newtheorem{corollary}{Corollary}
\newtheorem{lemma}{Lemma}
\newtheorem{proposition}{Proposition}
\theoremstyle{definition}
\newtheorem{definition}{Definition}
\newtheorem{example}{Example}
\newtheorem{remark}{Remark}
\newtheorem{assumption}{Assumption}
\newtheorem{fact}{Fact}
\newtheorem{problem}{Problem}
\numberwithin{equation}{section}
\newcommand{\setN}{\mathbb{N}}
\newcommand{\setR}{\mathbb{R}}
\newcommand{\setPR}{\mathbb{R}_{++}}
\newcommand{\setNNR}{\mathbb{R}_{+}}
\newcommand{\ip}[2]{\left\langle #1 , #2 \right\rangle}
\newcommand{\norm}[1]{\left\| #1 \right\|}
\newcommand{\ran}{\operatorname{ran}}
\newcommand{\nullsp}{\operatorname{null}}
\newcommand{\setLO}[2]{\mathcal{B}\left( #1 , #2 \right)}
\newcommand{\Id}{\operatorname{Id}}
\newcommand{\zeroMatrix}{\mathrm{O}}
\newcommand{\lipconst}[1]{\beta_{#1}}
\newcommand{\ri}{\operatorname{ri}}
\DeclareFontFamily{U}{mathx}{\hyphenchar\font45}
\DeclareFontShape{U}{mathx}{m}{n}{
      <5> <6> <7> <8> <9> <10>
      <10.95> <12> <14.4> <17.28> <20.74> <24.88>
      mathx10
      }{}
\DeclareSymbolFont{mathx}{U}{mathx}{m}{n}
\DeclareMathSymbol{\bigtimes}{1}{mathx}{"91}
\newcommand{\dom}{\operatorname{dom}}
\newcommand{\prox}{\operatorname{Prox}}
\newcommand{\fix}{\operatorname{Fix}}
\newcommand{\rec}{\operatorname{rec}}
\newcommand{\gra}{\operatorname{gra}}
\newcommand{\spH}{\mathcal{H}}
\newcommand{\spK}{\mathcal{K}}
\newcommand{\spX}{\mathcal{X}}
\newcommand{\spY}{\mathcal{Y}}
\newcommand{\spZ}{\mathcal{Z}}
\newcommand{\sptildeZ}{\widetilde{\mathcal{Z}}}
\newcommand{\spfrakZ}{\mathfrak{Z}}
\newcommand{\opP}{\mathfrak{P}}
\newcommand{\opL}{\mathfrak{L}}
\newcommand{\opC}{\mathfrak{C}}
\newcommand{\setS}{\mathcal{S}}
\newcommand{\minimize}[0]{\operatornamewithlimits{minimize\ }}
\newcommand{\argmin}{\operatornamewithlimits{argmin\ }}
\newcommand{\xstar}{{x^\star}}
\newcommand{\xdiamond}{x^\diamond}
\newcommand{\vdiamond}{v^\diamond}
\newcommand{\wdiamond}{w^\diamond}
\newcommand{\zdiamond}{z^\diamond}
\newcommand{\lrangle}[1]{\left\langle #1 \right\rangle}
\DeclareMathAlphabet\mathbfcal{OMS}{cmsy}{b}{n}
\newcommand{\Cz}{\Delta}
\newcommand{\fBeforeExtension}{\mathfrak{f}}
\begin{document}

\title{A Linearly involved Generalized Moreau Enhanced Model with Non-quadratic Smooth Convex Data Fidelity Functions}
\author{Wataru Yata, Keita Kume and Isao Yamada}
\maketitle
\renewcommand{\thefootnote}{\fnsymbol{footnote}}
\footnotetext[0]{This work was supported by JSPS Grants-in-Aid (19H04134, 23KJ0945, 24K23885).}
\renewcommand{\thefootnote}{\arabic{footnote}}

\begin{abstract}
      In this paper, we introduce an overall convex model incorporating a nonconvex regularizer.
      The proposed model is designed by extending the least squares term in the constrained LiGME model [Yata Yamagishi Yamada 2022] to  fairly general smooth convex functions for flexible utilization of non-quadratic data fidelity functions.
      Under an overall convexity condition for the proposed model, we present sufficient conditions for the existence of a minimizer of the proposed model and an inner-loop free algorithm with guaranteed convergence to a global minimizer of the proposed model.
      To demonstrate the effectiveness of the proposed model and algorithm, we conduct numerical experiments in scenarios of \textit{Poisson denoising problem} and \textit{simultaneous declipping and denoising problem}.
\end{abstract}

\section{Introduction} 
Estimation of a target signal $x^\star\in\spX$ from its noisy observation $y\in\spY$ following
\begin{align}
  \label{eq:linear-regression}
  y = Ax^\star +\varepsilon\in\spY
\end{align} 
is a central goal in inverse problems and signal processing
(see, e.g., \cite{nashed2002,theodoridis2020}), where $\spX$ and $\spY$ are finite-dimensional real Hilbert spaces (all Hilbert spaces in this paper should be understood as finite-dimensional real Hilbert spaces, i.e., Euclidean spaces), $A\in\setLO{\spX}{\spY}$ is a known linear operator and $\varepsilon\in\spY$ is noise.
A standard approach formulates the following convex optimization model~\eqref{eq:convex-opt} (see Section~\ref{sec:notation} and \ref{sec:Preliminaries-convex} for notation and technical terms) and assigns its minimizer to the estimate of~$\xstar$.
\begin{problem}
  \label{prob:convex-opt}
  For finite-dimensional real Hilbert spaces $\spX,\spY,\spZ$ and $\spfrakZ$, let 
  (a) $A\in \setLO{\spX}{\spY}$, and $f\in\Gamma_0(\spY)$ be differentiable over $\spY$ such that $\nabla f$ is $\lipconst{\nabla f}$-Lipschitz continuous over $\spY$; 
  (b) $\Psi\in\Gamma_0(\spZ)$ be \textit{coercive} and \textit{prox-friendly} and $(\mu, \opL)\in\setPR\times\setLO{\spX}{\spZ}$; (c) $\Cz\subset \spfrakZ$ be a \textit{simple} closed convex set and $\opC\in\setLO{\spX}{\spfrakZ}$.
  Assume the existence of $x\in\spX$ satisfying  $\opC x\in \Cz$ and $\Psi\circ\opL(x)< \infty$, i.e., $\dom(\Psi\circ \opL)\cap \opC^{-1}(\Cz)\neq\emptyset$. Then consider 
  \begin{equation}
    \label{eq:convex-opt}
    \minimize_{ \opC x \in\Cz} J_{\Psi\circ\opL}(x) \coloneqq f\circ A(x) + \mu \Psi\circ\opL (x).
  \end{equation}
\end{problem}
In the cost function $J_{\Psi\circ\opL}$ in \eqref{eq:convex-opt}, $f\circ A$ is a data fidelity function designed with $f$, called in this paper \emph{observation loss function}, according to the observation model \eqref{eq:linear-regression}.
A typical example of observation loss functions is the least squares function $f\coloneqq\frac{1}{2}\norm{y - \cdot}^2_{\spY}$
which is mainly motivated by the fact that this fidelity function $f$ is the negative log-likelihood function of the observation model \eqref{eq:linear-regression} under the assumption that $\varepsilon$ is the additive white Gaussian noise.
For non-Gaussian noise such as Poisson noise, many data fidelity functions have been proposed (see, e.g., \cite{byrne1993,chouzenoux2015,banerjee2024}). 

The second term $\Psi \circ \opL$ in \eqref{eq:convex-opt} is a regularizer for promoting prior knowledge on the target signal $x^\star$. 
In modern signal processing applications, many regularizers $\Psi\circ\opL$ have been designed based on a certain sparsity prior on $\xstar$. A standard convex regularizer for sparsity is the $\ell_1$ norm $\Psi\coloneqq \norm{\cdot}_1$ which is the convex envelope of $\ell_0$ pseudo-norm $\norm{\cdot}_0$ in the vicinity of zero vector.
The convexity of $\Psi\coloneqq\norm{\cdot}_1$ ensures the convexity of $J_{\Psi\circ\opL}$, which is a key for finding a global minimizer. 
The $\ell_1$ norm has been utilized widely, e.g., in the Lasso model \cite{tibshirani1996} and the total variation (TV) model \cite{rudin1992}.
However, the gap between $\norm{\cdot}_0$ and $\norm{\cdot}_1$ causes some underestimation effect which degrades the estimation accuracy \cite{candes2008}.

To suppress such an underestimation effect caused by $\norm{\cdot}_1$, nonconvex regularizers such as $\ell_p$-quasinorm ($0<p<1$) have been utilized as a sparsity promoting regularizer~$\Psi$, e.g., in \cite{chartrand2007}.
However, the severe nonconvexity of $\ell_p$-quasinorm makes the cost function nonconvex, and thus makes it difficult to design algorithms of guaranteed convergence to a global minimizer.
Indeed, even in the standard case of $f\coloneqq \frac{1}{2}\norm{y-\cdot}^2_{\spY}$, existing algorithms typically have a risk to be trapped into a local minimizer.

As a remedy of this dilemma, in a case of $f\coloneqq \frac{1}{2}\norm{y-\cdot}_{\spY}^2$,
the \textit{constrained LiGME (cLiGME) model}:
\begin{equation}
  \label{eq:cLiGME}
  \minimize_{\opC x\in\Cz} \frac{1}{2}\norm{y - Ax}^2_{\spY} + \mu \Psi_B \circ\opL (x)
\end{equation}
has been proposed \cite{cLiGME}, where
\begin{equation}
  \label{eq:GME}
  \Psi_B(\cdot) \coloneqq \Psi(\cdot) - \min_{v\in\spZ}\left[
    \Psi(v) + \frac{1}{2}\norm{B(\cdot - v)}^2_{\sptildeZ}
  \right]
\end{equation} 
is\footnote{
  The existence of a minimizer in the definition of $\Psi_B$ in \eqref{eq:GME} is guaranteed by the coercivity of $\Psi$ and the lower boundedness of $\frac{1}{2}\norm{B\cdot}^2_{\sptildeZ}$ \cite{LiGME} (see also Fact \ref{fact:existence-minimizer}).
} the \textit{generalized Moreau enhanced (GME) function} of convex $\Psi$ with a \textit{GME matrix} $B\in\setLO{\spZ}{\sptildeZ}$.
The GME regularizer $\Psi_B$ was introduced originally in the \textit{Linearly involved Generalized Moreau Enhanced (LiGME) model} \cite{LiGME} as an extension of the \textit{generalized minimax concave (GMC) penalty} \cite{selesnick2017}.
Indeed, the model \eqref{eq:cLiGME} reproduces the LiGME model \cite{LiGME} by setting $(\opC,\Cz)\coloneqq (\Id,\spX)$ and the GMC model by 
$(\Psi,\opL,\opC,\Cz) \coloneqq (\norm{\cdot}_1, \Id,\Id,\spX)$.
A remarkable feature of GMC and GME regularizers is that we can balance the nonconvexity of $\Psi_B$ and the convexity of the cost function in \eqref{eq:cLiGME} with a strategical tuning of $B$ \cite{Chen2023}.
Thanks to the convexity of the cost function, iterative algorithms for LiGME and cLiGME models have been established in \cite[Thoerem 1]{LiGME} and \cite[Theorem 3.1]{cLiGME} with guaranteed convergence to a global minimizer under its existence. 
These algorithms are designed based on proximal splitting techniques without using inner loops.
Applications of GMC, LiGME and cLiGME models have been found, e.g., in transient artifact suppression \cite{feng2020},
bearing fault diagnosis \cite{hou2021}, 
damage identification \cite{sun2022}, 
magnetic resonance imaging \cite{kitahara2021}, grouped variable selection \cite{liu2023}, discrete-valued signal estimation
\cite{shoji2025} and nonconvex enhancement of multi-layered regularizer \cite{katsuma2025}.
We refer readers to \cite{shabili2021,kuroda2024,heng2025} for further developments of GMC and GME strategies using inner-loop free algorithms in a case of\footnote{
  An extension of the data fidelity function to Bregman divergence is found in \cite[Section IV.A]{shabili2021}, where an iterative algorithm was given only in a case of $f = \frac{1}{2}\norm{y-\cdot}^2_{\spY}$.
} $f\coloneqq \frac{1}{2}\norm{y-\cdot}^2_{\spY}$.
The reference \cite{zhang2023} extended the data fidelity function in the LiGME model to a general convex function, where a DC type iterative algorithm, involving inner loops, has been utilized (see Remark~\ref{remark:research-question}).
Another research direction from GMC \cite{selesnick2017} is found in \cite{zhang2025} where the least angle regression (LARS) algorithm \cite{efron2004,tibishirani2013} established originally for the Lasso model has been extended for the GMC model with a specially designed GME matrix $B$.

In this paper, we address the following research questions:
\begin{enumerate}[(Q1)]
  \item Can we extend the cLiGME model \eqref{eq:cLiGME} and its inner-loop free algorithm for applications to non-quadratic data fidelity cases according to non-Gaussian noise~$\varepsilon$~?
  \item What conditions are sufficient for an extended model to have a minimizer~?
\end{enumerate}

\begin{remark}[Related works on research questions]
  \label{remark:research-question}
  \quad
  \begin{enumerate}[(a)]
    \item Extension of the data fidelity function allows us to utilize non-quadratic data fidelity functions for non-Gaussian noise cases.
    In \cite{zhang2023}, 
    for utilization of a non-quadratic data fidelity function, LiGME model was extended as an instance of \textit{difference-of-convex (DC) optimization problems} \cite{tao1997,thi2018} under an \textit{overall convexity condition} of the cost function.
    \cite{zhang2023} also proposed a DC type iterative algorithm
    based on the so-called "simplified DCA"
    \cite{tao1997}
    with use of inner loops, where every cluster point of the sequence generated by the algorithm in \cite{zhang2023} is guaranteed to be a global minimizer.
    On the other hand, in a case of $f\coloneqq \frac{1}{2}\norm{y - \cdot}^2_{\spY}$, \cite{LiGME,cLiGME} proposed inner-loop free algorithms based on proximal splitting techniques with guaranteed convergence of the whole sequence to a global minimizer.

    \item 
    The existence of a minimizer is certainly a key for successful applications of optimization models because we hope to use a minimizer as an estimate of the target signal. 
    Indeed, in \cite{LiGME,cLiGME,zhang2023}, the existence of a minimizer is assumed implicitly \cite{LiGME,cLiGME} or explicitly \cite{zhang2023}.
    So far, the existence of a minimizer for the cLiGME model is guaranteed based on standard conditions such as the nonsingularity of $A^*A$ or the compactness of $\opC^{-1}(\Cz)$ \cite[Footnote 3]{Chen2023}.
    In this paper, by exploiting more special properties of the cost function and the constraint in \eqref{eq:cLiGME}, we derive widely applicable sufficient conditions to ensure the existence of a minimizer of the model \eqref{eq:cLiGME}.
  \end{enumerate}
\end{remark}
In this paper, we consider the following model as a nonconvex enhancement of the model \eqref{eq:convex-opt}.
\begin{problem}[constrained LiGME model for non-quadratic Smooth Convex data Fidelities (cLiGME-SCF)]
  \label{prob:cLiGME-w-general-fidelity}
  In the settings of Problem \ref{prob:convex-opt}, consider the following optimization model:
  \begin{equation}
    \label{eq:cLiGME-w-general-fidelity}
    \minimize_{\substack{\opC x \in\Cz}} J_{\Psi_B\circ\opL}(x)\coloneqq f\circ A(x) + \mu \Psi_B\circ\opL (x)
  \end{equation}
  with the GME function $\Psi_B$ of $\Psi$ in \eqref{eq:GME}
  under
  \begin{equation}
    \label{eq:assumption-proper}
    \dom (\Psi \circ \opL) \cap \opC^{-1}(\Cz)\neq\emptyset
  \end{equation}
  (Note: $\dom (J_{\Psi_B\circ \opL}) = \dom(\Psi\circ \opL)$ will be shown in Proposition \ref{prop:overall-convexity}(b)).
\end{problem}

\begin{remark}[The model \eqref{eq:cLiGME-w-general-fidelity} is a generalization of the models \eqref{eq:convex-opt} and \eqref{eq:cLiGME}]
  The model \eqref{eq:cLiGME-w-general-fidelity} reproduces the naive convex model \eqref{eq:convex-opt} by letting $B=\zeroMatrix_{\spZ,\sptildeZ}\in\setLO{\spZ}{\sptildeZ}$. 
  By letting $f\coloneqq \frac{1}{2}\norm{y-\cdot}^2_{\spY}$, the model~\eqref{eq:cLiGME-w-general-fidelity} reproduces the cLiGME model~\eqref{eq:cLiGME}.
\end{remark}

Regarding the research questions (Q1) and (Q2), we present in 
Theorem \ref{thm:existence-solution} sufficient conditions for the existence of a minimizer of the model \eqref{eq:cLiGME-w-general-fidelity} and in Theorem \ref{thm:convergence-analysis} and Algorithm \ref{alg:proposed} an inner-loop free algorithm for the model~\eqref{eq:cLiGME-w-general-fidelity}.

The remainder of this paper is organized as follows\footnote{A preliminary version of this paper will be presented in \cite{yata2025}.}: after introducing notation and necessary notions of convex analysis, monotone operator theory and fixed point theory in Section \ref{sec:Preliminaries} (see also Appendix \ref{appendix:known-facts} for helpful facts for mathematical discussions in this paper), 
we present in Section \ref{sec:convexity-cond} a sufficient condition, called \textit{the overall convexity condition}, for the convexity of $J_{\Psi_B\circ\opL}$ (see Proposition \ref{prop:overall-convexity}).
Under the overall convexity condition, we present sufficient conditions for the existence of a minimizer of the model \eqref{eq:cLiGME-w-general-fidelity} (see Theorem \ref{thm:existence-solution}).
Even in a case where $f= \frac{1}{2}\norm{y - A\cdot}_{\spY}^2$ (in this case, the model~\eqref{eq:cLiGME-w-general-fidelity} reproduces the cLiGME model~\eqref{eq:cLiGME}), the proposed sufficient conditions for the existence of a minimizer are novel and reproduce the previously known conditions \cite[Footnote 3]{Chen2023}.
Under the overall convexity condition and the existence of a minimizer, in Section \ref{sec:algorithm}, we present an inner-loop free algorithm that approximates iteratively a global minimizer of the proposed model (see Theorem~\ref{thm:convergence-analysis}).
In Section \ref{sec:fidelity}, we discuss favorable properties of the observation loss function for the proposed model.
Even in a case where the observation loss function enjoys such conditions only over the constraint set, we propose a reformulation of the observation loss function for achieving favorable conditions over the entire space (see Proposition \ref{prop:separable-extension} and Corollary \ref{corollary:existence-minimizer-extension}).
In Section \ref{sec:numerical}, we demonstrate the effectiveness of the proposed methods by numerical experiments in scenarios of Poisson denoising problem (see Section~\ref{sec:poisson-denosing}) and simultaneous denoising and declipping problem (see Section~\ref{sec:declipping}).

\section{Preliminaries}
\label{sec:Preliminaries}
\subsection{Notation}
\label{sec:notation}
{
Symbols $\setN_0$, $\setR$, $\setNNR$ and $\setPR$ denote respectively all nonnegative integers, all real numbers, all nonnegative real numbers and all positive real numbers.
Let $\spH$ and $\spK$ be finite-dimensional real Hilbert spaces.
A Hilbert space $\spH$ is equipped with an inner product $\ip{\cdot}{\cdot}_\spH$ and its induced norm $\norm{\cdot}_\spH\coloneqq \sqrt{\ip{\cdot}{\cdot}_{\spH}}$.
The zero element of $\spH$ is  $0_{\spH}$.
For Hilbert spaces $\spH$ and $\spK$, $\spH\times \spK$ defines a new Hilbert space equipped with 
{
\thickmuskip=0.5\thickmuskip
\medmuskip=0.5\medmuskip
\thinmuskip=0.5\thinmuskip
\arraycolsep=0.5\arraycolsep
\begin{align*}
  \mbox{vector addition: }
  &(\spH\times\spK)\times(\spH\times\spK)\to(\spH\times\spK):((x_1,y_1),(x_2,y_2))\mapsto (x_1+x_2, y_1+y_2),\\
  \mbox{scalar multiplication: } 
  &\setR\times (\spH\times\spK)\to (\spH\times\spK) : (\alpha, (x,y))\mapsto (\alpha x, \alpha y),\\
  \mbox{inner product: } &\ip{\cdot}{\cdot}_{\spH\times \spK}: (\spH\times\spK) \times (\spH\times\spK)\to\setR:((x_1,y_1),(x_2,y_2))\mapsto\ip{x_1}{x_2}_\spH + \ip{y_1}{y_2}_\spK,\\
  \mbox{induced norm: } &\norm{\cdot}_{\spH\times \spK}: (\spH\times \spK)\to\setR:(x,y)\mapsto\sqrt{\ip{(x,y)}{(x,y)}_{\spH\times \spK}}.
\end{align*}
}%

$\setLO{\spH}{\spK}$ denotes the set of all linear operators from $\spH$ to $\spK$.
For a linear operator $L\in \setLO{\spH}{\spK}$, $\norm{L}_{\mathrm{op}}$ denotes the operator norm of $L$ (i.e., $\norm{L}_{\mathrm{op}} \coloneqq\sup_{x\in\spH, \norm{x}_{\spH}\leq 1} \norm{Lx}_\spK$),
$L^*\in\setLO{\spK}{\spH}$ denotes the adjoint operator of $L$ 
(i.e., $(\forall x\in \spH)(\forall y \in\spK)\, 
\ip{Lx}{y}_\spK = \ip{x}{L^*y}_\spH$), and $L^\dagger\in\setLO{\spK}{\spH}$ denotes the Moore-Penrose pseudo inverse of $L$ (see, e.g., \cite[Definition 3.28]{CAaMOTiH}).
The identity operator is denoted by
$\Id$ and the zero operator from $\spH$ to $\spK$ by $\zeroMatrix_{\spH,\spK}$. In particular, we use $\zeroMatrix_{\spH}$ for the zero operator from $\spH$ to $\spH$.
We express the positive definiteness and the positive semidefiniteness of
a self-adjoint operator $L\in \setLO{\spH}{\spH}$ 
as $L\succ \zeroMatrix_\spH$ and $L\succeq  \zeroMatrix_\spH$, respectively. We also use $L\succneqq \zeroMatrix_{\spH}$ to express the positive semidefiniteness of nonzero linear operator $L$.
Any $L\succ\zeroMatrix_{\spH}$ can be used to define a new Hilbert space $(\spH,\ip{\cdot}{\cdot}_{L},\norm{\cdot}_L)$ with the inner product $\ip{\cdot}{\cdot}_{L}\coloneqq \ip{L \cdot}{\cdot}_{\spH}$ and its induced norm $\norm{\cdot}_L\coloneqq \sqrt{\ip{\cdot}{\cdot}_{L}}$. 

For a set $S\subset\spH$,
$\operatorname{aff}(S)\coloneqq\{ \sum_{i=1}^k \lambda_i x_i\in\spH \mid k\in\setN, (\forall i \in\{1,2,\ldots,k\})\ \lambda_i\in\setR,x_i\in S, \sum_{i=1}^k\lambda_i=1\}$ is the affine hull of $S$, and $\ri (S)\coloneqq \{x\in\spH\mid (\exists \alpha\in\setPR) \ {B}(x;\alpha)\cap \operatorname{aff}(S)\subset S \}$ is the relative interior of $S$, where ${B}(x;\alpha)\coloneqq\{y\in\spH\mid \norm{x-y}_{\spH}<\alpha\}$ is an open ball of radius $\alpha>0$ centered at $x\in\spH$.
For a linear operator $L\in\setLO{\spH}{\spK}$ and sets $S_{\spH}\subset\spH$ and $S_{\spK}\subset\spK$, $L(S_{\spH})\coloneqq \{Lx \in\spK\mid x\in S_{\spH}\}$ is the image of $S_{\spH}$ under $L$, and $L^{-1}(S_{\spK})\coloneqq \{x\in\spH \mid Lx\in S_{\spK}\}$ is the preimage of $S_{\spK}$ under $L$.
In particular, $\nullsp L$ denotes the null space of $L$ (i.e., $\nullsp L\coloneqq L^{-1}(\{0_{\spK}\})$) and $\ran L$ denotes the range space of $L$ (i.e., $\ran L\coloneqq L(\spH)$).

For a vector $x\in\setR^n$, the $i$-th component of $x$ is denoted by $[x]_i\in\setR$.
Likewise, for a matrix $A \in \setR^{m\times n}$, $(i,j)$-th entry of $A$ is denoted by $[A]_{i,j}\in\setR$.
}
\subsection{Selected elements in convex analysis, monotone operator theory and fixed point theory}
\label{sec:Preliminaries-convex}
A set $C\subset\spH$ is said to be convex if $(\forall x,y\in C, \alpha\in (0,1))\ \alpha x + (1-\alpha) y \in C$.
A function $\varphi:\spH\to (-\infty,\infty]$ is said to be
(a) proper if $\dom \varphi \coloneqq \{x\in\spH \mid \varphi(x)<\infty\}\neq \emptyset$, 
(b) lower semicontinuous
if $ \operatorname{lev}_{\leq\alpha} \varphi \coloneqq \{ x\in \spH | \varphi(x)\leq \alpha\}$ 
is closed for every $\alpha\in \setR$,
(c) convex if 
$\varphi(\alpha x + (1-\alpha) y)\leq \alpha \varphi(x)+(1-\alpha) \varphi(y) $
for every $x,y \in \dom \varphi$ and $\alpha\in(0,1)$.
The set of all proper lower semicontinuous convex functions defined on $\spH$ is denoted by $\Gamma_0 (\spH)$.

We use the following functions/operators associated with $\varphi\in\Gamma_0(\spH)$.

\noindent
\textbf{(Legendre-Fenchel conjugate)} 
The \textit{Legendre-Fenchel conjugate} of $\varphi\in\Gamma_0(\spH)$ is defined as 
\begin{equation}
  \varphi^*:\spH\to(-\infty, \infty]: y \mapsto\sup_{x\in\spH}\left( \ip{x}{y}_{\spH} - \varphi(x)\right).
\end{equation}
For any $\varphi\in\Gamma_0(\spH)$, $\varphi^*\in\Gamma_0(\spH)$ is guaranteed.

\noindent \textbf{(Subdifferential)}
For a function $\varphi\in\Gamma_0(\spH)$, the \textit{subdifferential} of $\varphi$ is defined as
\begin{equation}
  \partial \varphi:\spH\to 2^{\spH}:x\mapsto \{u\in\spH\mid  (\forall y\in\spH)\ \varphi(x)+ \ip{y-x}{u}_{\spH}\leq \varphi(y)\}.
\end{equation}

\noindent \textbf{(Proximity operator)}
The \textit{proximity operator} $\prox_{\varphi}:\spH\to\spH$ of $\varphi\in\Gamma_0(\spH)$ is defined as 
\begin{equation}
  \prox_{\varphi}:\spH\to\spH:x\mapsto \argmin_{v\in\spH}\left[
    \varphi(v) + \frac{1}{2}\norm{v-x}^2_\spH
  \right].
\end{equation}
$\varphi\in\Gamma_0(\spH)$ is said to be \textit{prox-friendly} if
$\prox_{\gamma\varphi}$ is available as a computable operator for every $\gamma>0$. For $\varphi\in\Gamma_0(\spH)$, $\prox_{\varphi^*} = \Id - \prox_{\varphi}$ holds \cite[Proposition 24.8(ix)]{CAaMOTiH}.
For a nonempty closed convex set $C\subset \spH$, the proximity operator $\prox_{\iota_C}$ of the \textit{indicator function} of $C$:
\begin{equation}
  \iota_C:\spH\to(-\infty,\infty]:x\mapsto \begin{cases}
    0 & x\in C\\
    \infty & x\notin C
  \end{cases}
\end{equation} 
coincides with the \textit{metric projection} $P_C$ onto $C$:
\begin{equation}
  P_C:\spH\to\spH:x\mapsto \argmin_{y\in C}\norm{x-y}_{\spH}.
\end{equation}
A closed convex set $C$ is said to be \textit{simple} if $P_C$ is available as a computable operator.

\noindent \textbf{(Recession function)}
For $\varphi\in\Gamma_0(\spH)$, its \textit{recession function} $\operatorname{rec}(\varphi)\in\Gamma_0(\spH)$ is defined as
\begin{equation}
  \label{eq:properties-recession}
  \operatorname{rec}(\varphi):\spH\to (-\infty,\infty]:x \mapsto \sup_{v\in\dom \varphi} (\varphi(v+x) - \varphi(v)) = \lim_{\alpha\to+\infty} \frac{\varphi(u + \alpha x)}{\alpha},
\end{equation}
where the last equation holds independently of the choice $u\in\dom\varphi$ \cite[Proposition 9.30(iii)]{CAaMOTiH}. From the definition, $\operatorname{rec}(\varphi)(0_{\spH}) =0$ holds.
For $\psi\in\Gamma_0(\spK)$ and $L\in\setLO{\spH}{\spK}$ satisfying $\dom \psi\cap \ran L\neq \emptyset$, $\operatorname{rec} (\psi\circ L) = \operatorname{rec}(\psi)\circ L$ holds \cite[Proposition 9.30(vii)]{CAaMOTiH}.

By using the recession function, we introduce the concept of coercivities that have been exploited for ensuring the existence of a minimizer of optimization models (see Fact \ref{fact:existence-minimizer} below).

\noindent
\textbf{(Coercivity)} 
A function $\varphi\in\Gamma_0(\spH)$ is said to be \textit{coercive} \cite[Definition 2.1]{Auslender1996} if 
\begin{equation}
  \label{eq:def-coercive}
  (\forall x \in\spH\setminus \{0_{\spH}\})\quad \operatorname{rec} (\varphi) (x) > 0.
\end{equation}
The condition \eqref{eq:def-coercive} is equivalent\footnote{From \cite[Theorem 3.26(a) and Corollary 3.27]{rockafellar2009variational}, $\varphi\in\Gamma_0(\spH)$ satisfies \eqref{eq:def-coercive} if and only if $\operatorname{lev}_{\leq\alpha}\varphi$ is bounded for every $\alpha\in\setR$.
Furthermore, \cite[Proposition 11.12]{CAaMOTiH} verifies that $\operatorname{lev}_{\leq\alpha}\varphi$ is bounded for every $\alpha\in \setR$ if and only if $\varphi$ satisfies \eqref{eq:def2-coercive}.} to the condition
\begin{equation}
  \label{eq:def2-coercive}
  \lim_{\norm{x}_{\spH}\to +\infty} \varphi(x) = \infty
\end{equation}
used in \cite{CAaMOTiH,LiGME,cLiGME} for the definition of the coercivity.

\noindent
\textbf{(Weak coercivity)}
A function $\varphi\in\Gamma_0(\spH)$ is said to be \textit{weakly coercive} \cite[Definition~2.2]{Auslender1996} \cite[Definition 3.2.1]{auslender2003} if $\varphi$ satisfies the following conditions
\begin{equation}
  \label{eq:recession-nonnegative}
 (\forall x \in\spH)\quad \operatorname{rec} (\varphi) (x) \geq 0
\end{equation}
and 
\begin{equation}
  \label{eq:constant-on-recession-direction}
  \operatorname{rec}  (\varphi) (x) = 0 
  \implies
  (\forall \alpha \in\setR)
  (\forall u\in\dom\varphi)\quad
  \varphi(u+\alpha x) = \varphi(u).
\end{equation}

\begin{fact}
  \label{fact:existence-minimizer}
  For $\varphi\in\Gamma_0(\spH)$ and a closed convex set $C\subset\spH$ satisfying $\dom \varphi \cap C \neq \emptyset$,
  consider the following minimization problem:
  \begin{equation}
    \label{eq:fact-opt-prob}
    \minimize_{x \in C} \varphi(x).
  \end{equation}
  Then the following hold.
  \begin{enumerate}[(a)]
    \item (\cite[Proposition 11.15]{CAaMOTiH}). Assume that $\varphi$ is coercive or $C$ is bounded. Then the minimization problem~\eqref{eq:fact-opt-prob}
    has a minimizer in $C$.
    \item (\cite[Theorem 2.4]{Auslender1996}). Assume that $\varphi$ is weakly coercive and $C$ is \textit{asymptotically multipolyhedral} \cite[Definition 2.3]{Auslender1996}, i.e., $C$ can be decomposed as $C=S+K$ with a compact set $S\subset\spH$ and a polyhedral cone\footnote{
      A set $K$ is a cone if $(\forall x \in K)(\forall \alpha\in\setPR)\ \alpha x \in K$.
      A set $K$ is polyhedral if it is a finite intersection of closed half-spaces.
    } $K\subset\spH$.
    Then the minimization problem \eqref{eq:fact-opt-prob} has a minimizer in $C$.
  \end{enumerate}
\end{fact}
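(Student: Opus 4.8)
The plan is to reduce the constrained problem \eqref{eq:fact-opt-prob} to an unconstrained one by working with $\psi\coloneqq\varphi+\iota_C$. Since $\dom\varphi\cap C\neq\emptyset$, $C$ is closed, and both $\varphi$ and $\iota_C$ are convex, we have $\psi\in\Gamma_0(\spH)$, and minimizing $\varphi$ over $C$ is the same as minimizing $\psi$ over $\spH$. The engine for both parts is the standard fact that a coercive function in $\Gamma_0(\spH)$ attains its infimum: every sublevel set of such a function is closed (by lower semicontinuity) and bounded (by coercivity, cf.\ the footnote after \eqref{eq:def2-coercive}), hence compact, and it is nonempty for all sufficiently large levels, so the infimum is attained by the Weierstrass theorem; this is \cite[Proposition 11.15]{CAaMOTiH}.

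Part (a) then follows at once. If $C$ is bounded, then $\psi(x)=+\infty$ for every $x$ outside the bounded set $C$, so $\psi(x)\to+\infty$ as $\norm{x}_{\spH}\to+\infty$ and $\psi$ is coercive. If instead $\varphi$ is coercive, then $\psi\geq\varphi$ pointwise gives $\liminf_{\norm{x}_{\spH}\to+\infty}\psi(x)\geq\lim_{\norm{x}_{\spH}\to+\infty}\varphi(x)=+\infty$, so again $\psi$ is coercive. In either case the previous paragraph applies.

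Part (b) is the substantial one, because under mere weak coercivity $\psi=\varphi+\iota_C$ need not be coercive: its sublevel sets can be unbounded along any $d\in K$ with $\rec\varphi(d)=0$. The plan is to show, using weak coercivity, that such $d$ are harmless constancy directions, and to use the polyhedrality of $K$ to cope with the fact that $-d$ may fail to lie in $K$. First, $\rec\varphi\geq 0$ (condition \eqref{eq:recession-nonnegative}) forces $\varphi$ to be bounded below, so $m\coloneqq\inf_{x\in C}\varphi(x)>-\infty$; take a minimizing sequence $x_n\in C$ with $\varphi(x_n)\to m$ and write $x_n=s_n+k_n$ with $s_n\in S$, $k_n\in K$, passing to a subsequence so that $s_n\to s\in S$ by compactness of $S$. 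If $(k_n)$ is bounded, pass to a further subsequence with $k_n\to k\in K$; then $x^\star\coloneqq s+k\in S+K=C$ and lower semicontinuity gives $\varphi(x^\star)\leq\liminf_n\varphi(x_n)=m$, so $x^\star$ is a minimizer. If $(k_n)$ is unbounded, normalize to extract $d\coloneqq\lim_n k_n/\norm{k_n}_{\spH}\in K$ with $\norm{d}_{\spH}=1$; dividing $\varphi(x_n)\to m$ by $\norm{x_n}_{\spH}\to\infty$ yields $\rec\varphi(d)\leq 0$, hence $\rec\varphi(d)=0$ by \eqref{eq:recession-nonnegative}, and then $\varphi$ is constant along the direction $d$ by \eqref{eq:constant-on-recession-direction}. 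Now distinguish two cases. If $-d\in K$, then $\lspan\{d\}$ lies in the lineality of $C$ and in the constancy subspace of $\varphi$, so the problem descends to the lower-dimensional space $(\lspan\{d\})^{\perp}$, on which $C$ is again compact plus a polyhedral cone and $\varphi$ is again weakly coercive; conclude by induction on $\dim\spH$. If $-d\notin K$, then $\beta_n\coloneqq\sup\{\beta\geq 0\mid k_n-\beta d\in K\}$ is finite (otherwise $-d\in\rec K=K$), and replacing $x_n$ by $x_n-\beta_n d$ — which leaves $\varphi$ unchanged because $\varphi$ is constant along $d$, and stays in $C$ — puts the $K$-part of $x_n$ on a proper face of $K$; since a polyhedral cone has finitely many faces, each a polyhedral cone of strictly smaller dimension, a pigeonhole argument fixes one face $F$, reducing the problem to $S+F$ with $\dim F<\dim K$, and one concludes by induction on $\dim K$. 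The base cases $\dim\spH=0$ and $\dim K=0$ (where $C=S$ is compact) are covered by part (a).

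I expect the main obstacle to be the bookkeeping in part (b): verifying that $\varphi$ is bounded below from $\rec\varphi\geq 0$ (a normalized minimizing-sequence argument), that proper faces of a polyhedral cone are again polyhedral cones of strictly smaller dimension and that there are only finitely many of them, and — most delicately — that both reductions (to $(\lspan\{d\})^{\perp}$ and to $S+F$) preserve the hypotheses \emph{asymptotically multipolyhedral} for the constraint and \emph{weakly coercive} for $\varphi$, the latter resting on the good behaviour of $\rec\varphi$ and of the constancy subspace of $\varphi$ under restriction to a subspace. An alternative to this bare-hands induction, and essentially the route of \cite[Theorem 2.4]{Auslender1996}, is to recast \eqref{eq:fact-opt-prob} as the unconstrained minimization of $(u,v)\mapsto\varphi(u+v)+\iota_S(u)+\iota_K(v)$ over $\spH\times\spH$ and invoke an existence theorem for the minimization of a sum of closed proper convex functions one of which ($\iota_K$) is polyhedral, the recession/constancy condition required of the non-polyhedral summands being supplied exactly by weak coercivity of $\varphi$ together with compactness of $S$.
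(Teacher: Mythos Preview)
The paper does not give a proof of Fact~\ref{fact:existence-minimizer}: it is recorded as a known result, with (a) attributed to \cite[Proposition~11.15]{CAaMOTiH} and (b) to \cite[Theorem~2.4]{Auslender1996}. So there is no in-paper argument to compare against; your sketch is an attempt to supply what the paper merely cites. Part~(a) is correct and standard, and your outline for part~(b) --- extract a recession direction $d$ from an unbounded minimizing sequence, use \eqref{eq:constant-on-recession-direction} to slide along $d$ without changing $\varphi$, and reduce either by quotienting out the lineality direction (if $-d\in K$) or by pushing onto a proper face of $K$ (if $-d\notin K$) --- is essentially Auslender's argument and is sound, modulo the bookkeeping you already flag.

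There is one genuine slip. You assert that ``$\rec\varphi\geq 0$ (condition~\eqref{eq:recession-nonnegative}) forces $\varphi$ to be bounded below.'' Condition~\eqref{eq:recession-nonnegative} alone does not give this: take $\varphi\in\Gamma_0(\setR)$ defined by $\varphi(t)=-\log t$ for $t>0$ and $\varphi(t)=+\infty$ otherwise; then $\rec\varphi(d)=0$ for $d\geq 0$ and $\rec\varphi(d)=+\infty$ for $d<0$, so $\rec\varphi\geq 0$, yet $\inf\varphi=-\infty$. What does yield boundedness below is the \emph{full} weak-coercivity hypothesis: by~\eqref{eq:constant-on-recession-direction} the set $L\coloneqq\{d\in\spH:\rec\varphi(d)=0\}$ is a subspace along which $\varphi$ is constant, so $\varphi$ factors through $L^{\perp}$, and the induced function on $L^{\perp}$ has $\rec>0$ in every nonzero direction, hence is coercive and bounded below. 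With this correction in place (and with the two inductions organized, say, by the single quantity $\dim K$, which strictly decreases in both branches), your argument goes through.
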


We also use the following notions and properties of operators.

\noindent
\textbf{(Nonexpansive operator)}
An operator $T:\spH\to\spH$ is said to be \textit{Lipschitz continuous} with a Lipschitz constant $\lipconst{T}\in\setNNR$ if 
\begin{equation}
   (\forall (x,y)\in\spH\times \spH)\ 
  \norm{Tx-Ty}_{\spH} \leq \lipconst{T}\norm{x-y}_{\spH}.
\end{equation}
An operator $T:\spH\to\spH$ is said to be 
\textit{nonexpansive} if $T$ is $1$-Lipschitz continuous.
In particular, $T$ is \textit{$\alpha$-averaged nonexpansive} with $\alpha\in (0,1)$ if there exists a nonexpansive operator $R:\spH\to\spH$ such that $T= (1-\alpha)\Id + \alpha R$.
A fixed point of a nonexpansive operator can be approximated iteratively via the Krasnosel'ski\u{\i}-Mann iteration.
\begin{fact}[Krasnosel'ski\u{\i}-Mann iteration (See, e.g., {\cite{groetsch1972}})]
  \label{fact:KM-itr}
  Let $T:\spH\to\spH$ be a nonexpansive operator such that $\fix(T) \coloneqq \{x \in \spH\mid x = T(x)\}\neq \emptyset$, where $\fix(T)\subset\spH$ is called the \textit{fixed point set of $T$} and known to be a closed convex set.
    For any initial point $x_0\in\spH$,
    set $(x_k)_{k\in\setN_0}$ by 
    \begin{equation}
      (\forall k\in \setN_0)\  x_{k+1} = [(1-\lambda_k)\Id + \lambda_k T](x_k)
    \end{equation}
    with a sequence $(\lambda_k)_{k\in\setN_0} \subset [0,1]$ 
    satisfying
    $\sum_{k\in\setN_0}\lambda_k(1- \lambda_k)=+\infty$.
    Then $(x_k)_{k\in\setN_0}$ converges to a point in $\fix(T)$.
    In particular, if $T= (1-\alpha)\Id + \alpha R$ is $\alpha$-averaged nonexpansive with $\alpha\in(0,1)$ and a nonexpansive operator $R:\spH\to\spH$,
    the sequence generated by 
    $
      (\forall n \in \setN_0)\ x_{k+1} =T(x_k)
    $
    converges to a point in $\fix(T)=\fix(R)$.
\end{fact}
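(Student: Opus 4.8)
The plan is to run the classical Fej\'er-monotonicity argument for Krasnosel'ski\u{\i}--Mann iterations, which in the present finite-dimensional setting is especially short because it needs only continuity of $T$ (not the demiclosedness of $\Id-T$ in any weak topology). Throughout, fix an arbitrary $z\in\fix(T)$ and abbreviate $a_k\coloneqq\lambda_k(1-\lambda_k)$ and $b_k\coloneqq\norm{Tx_k-x_k}_{\spH}$.

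First I would derive a quantitative Fej\'er-type estimate. Since $x_{k+1}-z=(1-\lambda_k)(x_k-z)+\lambda_k(Tx_k-z)$ and $\norm{Tx_k-z}_{\spH}=\norm{Tx_k-Tz}_{\spH}\le\norm{x_k-z}_{\spH}$, applying the elementary identity $\norm{(1-\lambda)u+\lambda v}_{\spH}^2=(1-\lambda)\norm{u}_{\spH}^2+\lambda\norm{v}_{\spH}^2-\lambda(1-\lambda)\norm{u-v}_{\spH}^2$ with $u=x_k-z$ and $v=Tx_k-z$ yields
\begin{equation}
  \norm{x_{k+1}-z}_{\spH}^2\le\norm{x_k-z}_{\spH}^2-a_kb_k^2\qquad(k\in\setN_0).
\end{equation}
In particular $(\norm{x_k-z}_{\spH})_{k\in\setN_0}$ is nonincreasing, hence convergent, and $(x_k)_{k\in\setN_0}$ is bounded; summing the estimate telescopically gives $\sum_{k\in\setN_0}a_kb_k^2\le\norm{x_0-z}_{\spH}^2<\infty$.

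Next I would extract a fixed point. Because $\sum_{k\in\setN_0}a_k=+\infty$ by hypothesis, the summability just obtained forces $\liminf_{k\to\infty}b_k=0$, so $\norm{Tx_{k_j}-x_{k_j}}_{\spH}\to0$ along some subsequence $(x_{k_j})_j$. As $(x_k)_{k\in\setN_0}$ is bounded and $\spH$ is finite-dimensional, a further subsequence converges, say $x_{k_{j_i}}\to x^\star$; continuity of the nonexpansive $T$ then gives $Tx_{k_{j_i}}\to Tx^\star$, and passing to the limit in $\norm{Tx_{k_{j_i}}-x_{k_{j_i}}}_{\spH}\to0$ yields $Tx^\star=x^\star$, i.e.\ $x^\star\in\fix(T)$. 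Finally, applying the displayed estimate with $z\coloneqq x^\star$ shows $(\norm{x_k-x^\star}_{\spH})_{k\in\setN_0}$ is nonincreasing while one of its subsequences tends to $0$; hence the entire sequence tends to $0$, i.e.\ $x_k\to x^\star\in\fix(T)$.

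For the averaged case, note that iterating $x_{k+1}=T(x_k)$ with $T=(1-\alpha)\Id+\alpha R$ is exactly the above iteration for the nonexpansive operator $R$ with constant parameter $\lambda_k\equiv\alpha\in(0,1)$, for which $\sum_k\alpha(1-\alpha)=+\infty$; moreover $x=Tx\iff\alpha x=\alpha Rx\iff x=Rx$, so $\fix(T)=\fix(R)$ and the first part gives convergence to a point of $\fix(R)=\fix(T)$. (The asserted closedness of $\fix(T)$ is immediate from continuity of $x\mapsto x-Tx$, and convexity follows from midpoint convexity: for $x,y\in\fix(T)$ with $m\coloneqq(x+y)/2$, the bounds $\norm{x-Tm}_{\spH}\le\norm{x-m}_{\spH}$ and $\norm{y-Tm}_{\spH}\le\norm{y-m}_{\spH}$ combined with the triangle inequality and strict convexity of $\norm{\cdot}_{\spH}$ force $Tm=m$.) The one genuinely delicate step is the passage from $\sum_k a_kb_k^2<\infty$ and $\sum_k a_k=+\infty$ to an actual fixed-point cluster point of $(x_k)_{k\in\setN_0}$; in an infinite-dimensional Hilbert space this is precisely where the demiclosedness principle for $\Id-T$ enters, but finite-dimensionality lets plain continuity suffice here.
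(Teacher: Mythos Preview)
Your argument is correct and is the classical Fej\'er-monotonicity proof of the Krasnosel'ski\u{\i}--Mann theorem, specialized to finite dimensions so that plain continuity replaces the demiclosedness principle. There is, however, nothing in the paper to compare against: the paper records this statement as a \emph{Fact} in the preliminaries, cites the literature (Groetsch 1972), and gives no proof of its own. Your self-contained derivation is a valid justification of a result the authors simply quote.

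One very minor remark: you extract a subsequence along which $b_k\to 0$ via $\liminf$, which suffices; alternatively one can first note that $(b_k)_k$ is nonincreasing (from $\norm{Tx_{k+1}-x_{k+1}}_{\spH}\le\norm{Tx_{k+1}-Tx_k}_{\spH}+\norm{Tx_k-x_{k+1}}_{\spH}\le\lambda_k b_k+(1-\lambda_k)b_k=b_k$) and conclude $b_k\to 0$ outright, which slightly streamlines the cluster-point step. Either way the proof goes through.
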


\noindent
\textbf{(Cocoercive operator)}
An operator $T:\spH\to\spH$ is said to be \textit{$\alpha$-cocoercive} with $\alpha\in\setPR$ if
\begin{equation}
  (\forall (x, y)\in\spH\times\spH)\ \ip{x-y}{Tx - Ty}_{\spH}\geq \alpha \norm{Tx-Ty}^2_{\spH}.
\end{equation} 

\noindent
\textbf{(Monotone operator)} A set-valued operator $A:\spH\to 2^{\spH}$ is \textit{monotone} if
\begin{equation}
  (\forall (x,u)\in \gra(A))(\forall (y,v) \in \gra(A))\ 
  \ip{x-y}{u-v}_{\spH}\geq 0,
\end{equation}
where $\gra(A) \coloneqq \left\{
  (x,u)\in \spH\times\spH |u\in A(x)
\right\}\subset \spH\times\spH$. 
In particular,
$A$ is \textit{maximally monotone} if
\begin{equation}
  (x,u)\in \gra(A) \iff (\forall (y, v)\in \gra(A))\
  \ip{x-y}{u-v}_{\spH}\geq 0
\end{equation} 
holds for every $(x,u)\in \spH\times\spH$.
For $\varphi\in\Gamma_0(\spH)$, its subdifferential $\partial \varphi:\spH\to2^{\spH}$ is maximally monotone.
$A:\spH\to 2^\spH$ is maximally monotone if and only if 
the \textit{resolvent} $(\Id +A)^{-1}:$ $u\in\spH\mapsto 
\left\{ x\in \spH \middle| u\in x+A(x)\right\}$ is single-valued and
$\frac{1}{2}$-averaged nonexpansive \cite[Corollary 23.9]{CAaMOTiH}.

\subsection{Relative strong convexity and relative weak convexity}
The \textit{relative strong/weak convexity} has been utilized, e.g., in \cite{haihao2018,davis2018}, as a generalization of the classical strong/weak convexity.
As will be shown in Proposition \ref{prop:overall-convexity}, mechanism to achieve the convexity of the proposed model \eqref{eq:cLiGME-w-general-fidelity} can be interpreted through {the strong/weak convexity relative to a quadratic function $q_M$}
(Note: The strong convexity relative to a reference function other than $q_M$ is also considered in \cite{haihao2018,davis2018}).

\begin{definition}[Strong convexity and weak convexity relative to $q_M$]
  \label{def:strong-convexity}
  Let
  $\varphi:\spH\to(-\infty,\infty]$ be proper and lower semicontinuous, $C\subset \spH$ be a closed convex set such that $\dom \varphi \cap C \neq \emptyset$. 
  Define $q_M$ with $M\succeq \zeroMatrix_{\spH}$ by
  \begin{equation}
    q_M:\spH\to\setR: x\mapsto \frac{1}{2}\ip{Mx}{x}_{\spH}.
  \end{equation}
  \begin{enumerate}[(a)]
    \item With $\alpha>0$, $\varphi$ is said to be \textit{$\alpha$-strongly convex relative to the reference function $q_M$ on $C$} if 
      $\varphi - \alpha q_M
    $ is convex over $C$. In particular, if $C\coloneqq \spH$, we say $\varphi$ is $\alpha$-strongly convex relative to $q_M$. (Note: Every convex function is $\alpha$-strongly convex relative to $q_{\zeroMatrix_{\spH}}$.)
    \item With $\alpha>0$, $\varphi$ is said to be \textit{$\alpha$-weakly convex relative to the reference function $q_M$ on $C$} if 
      $\varphi + \alpha q_M
    $ is convex over $C$. In particular, if $C\coloneqq \spH$, we say $\varphi$ is $\alpha$-weakly convex relative to $q_M$.
  \end{enumerate}
  In a case of $q_M = q_{\Id}$, the relative strong/weak convexity reduces to the classical strong/weak convexity.
\end{definition}

We present basic properties of the strong convexity and the weak convexity relative to $q_M$.
\begin{proposition}[Properties of strongly/weakly convex function relative to $q_M$]
  \label{prop:properties-strong-convexity}
  \quad
  \begin{enumerate}[\upshape (a)]
    \item Let $\varphi_1:\spH\to(-\infty,\infty]$
    and $\varphi_2:\spH\to(-\infty,\infty]$ be proper lower semicontinuous. 
    Assume that $\varphi_1$ is $\alpha_1$-strongly convex relative to $q_{M_1}$ and $\varphi_2$ is $\alpha_2$-weakly convex relative to $q_{M_2}$.
    Then $\varphi_1+\varphi_2\in\Gamma_0(\spH)$ if $\alpha_1 M_1 - \alpha_2M_2\succeq \zeroMatrix_{\spH}$ and $\dom(\varphi_1 +\varphi_2)\neq \emptyset$. 

    \item Let $\psi\in\Gamma_0(\spK)$ be $\alpha$-strongly convex relative to $q_M$ and $L\in\setLO{\spH}{\spK}$ satisfy $\dom  \psi\cap\ran L\neq\emptyset$. Then $\psi\circ L$ is $\alpha$-strongly convex relative to $q_{L^*ML}$.
  \end{enumerate}
\end{proposition}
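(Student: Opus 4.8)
The plan is to reduce both claims to three elementary facts about the quadratics $q_M$: (i) for a self-adjoint $N\succeq\zeroMatrix_{\spH}$ the function $q_N$ is convex and real-valued; (ii) the assignment $M\mapsto q_M$ is linear, i.e. $q_{M_1}+q_{M_2}=q_{M_1+M_2}$ and $\alpha q_M=q_{\alpha M}$; and (iii) $q_M\circ L=q_{L^*ML}$. Combined with the usual stability of convexity, lower semicontinuity and properness under finite sums and under precomposition with a bounded linear operator, these will settle the proposition. The only steps needing a moment's care are checking that the conclusions assert membership in $\Gamma_0$ (resp. the standing requirements of Definition \ref{def:strong-convexity}), so one must verify properness and lower semicontinuity, not merely convexity.

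For part (a): by Definition \ref{def:strong-convexity}, $g_1\coloneqq\varphi_1-\alpha_1 q_{M_1}$ and $g_2\coloneqq\varphi_2+\alpha_2 q_{M_2}$ are convex on $\spH$. Using linearity of $M\mapsto q_M$, I would write
\[
  \varphi_1+\varphi_2 \;=\; g_1+g_2+\bigl(\alpha_1 q_{M_1}-\alpha_2 q_{M_2}\bigr)\;=\;g_1+g_2+q_{\alpha_1 M_1-\alpha_2 M_2}.
\]
Since $\alpha_1 M_1-\alpha_2 M_2\succeq\zeroMatrix_{\spH}$, the last term is a convex real-valued quadratic, so $\varphi_1+\varphi_2$ is a sum of convex functions and hence convex. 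It is lower semicontinuous because $\varphi_1,\varphi_2$ take values in $(-\infty,\infty]$ and are lower semicontinuous, and a sum of two such functions is lower semicontinuous; and it is proper by the hypothesis $\dom(\varphi_1+\varphi_2)\neq\emptyset$. Therefore $\varphi_1+\varphi_2\in\Gamma_0(\spH)$.

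For part (b): the key computation is
\[
  q_{L^*ML}(x)=\tfrac{1}{2}\ip{L^*MLx}{x}_{\spH}=\tfrac{1}{2}\ip{MLx}{Lx}_{\spK}=q_M(Lx),
\]
so $q_{L^*ML}=q_M\circ L$; moreover $L^*ML$ is self-adjoint and, since $M\succeq\zeroMatrix_{\spK}$, satisfies $L^*ML\succeq\zeroMatrix_{\spH}$, so $q_{L^*ML}$ is an admissible reference function. Consequently
\[
  \psi\circ L-\alpha q_{L^*ML}=(\psi-\alpha q_M)\circ L,
\]
and $\psi-\alpha q_M$ is convex by hypothesis, so its precomposition with the bounded linear operator $L$ is convex. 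Finally $\psi\circ L$ is proper (from $\dom\psi\cap\ran L\neq\emptyset$ we get some $x$ with $Lx\in\dom\psi$, i.e. $x\in\dom(\psi\circ L)$) and lower semicontinuous (composition of the lower semicontinuous $\psi$ with the continuous $L$), so it meets the standing hypotheses of Definition \ref{def:strong-convexity} and is $\alpha$-strongly convex relative to $q_{L^*ML}$. I do not anticipate any real obstacle here; the proof is essentially the bookkeeping of these identities.
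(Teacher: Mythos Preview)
Your proposal is correct and follows essentially the same approach as the paper: the paper uses the identical decomposition $\varphi_1+\varphi_2=(\varphi_1-\alpha_1 q_{M_1})+(\varphi_2+\alpha_2 q_{M_2})+q_{\alpha_1 M_1-\alpha_2 M_2}$ for (a) and the identity $\psi\circ L-\alpha q_{L^*ML}=(\psi-\alpha q_M)\circ L$ for (b). The only cosmetic difference is that the paper cites \cite[Corollary~9.4 and Proposition~9.5]{CAaMOTiH} for the $\Gamma_0$-closure facts you verify directly.
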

\begin{proof}
\noindent
\textbf{(Proof of (a))}
We have the expression
$
  \varphi_1 + \varphi_2
  = \left(\varphi_1 - \alpha_1q_{M_1}\right) + \left(\varphi_2 + \alpha_2q_{M_2}\right)
  + q_{(\alpha_1M_1- \alpha_2M_2)}.
$
From $\alpha_1M_1-\alpha_2M_2\succeq \zeroMatrix_{\spH}$, we observe that $\varphi_1 - \alpha_1q_{M_1}$, $\varphi_2 + \alpha_2q_{M_2}$ and $q_{(\alpha_1M_1- \alpha_2M_2)}$ in RHS of the last equation belong to $\Gamma_0(\spH)$. 
Hence, $\varphi_1+ \varphi_2\in\Gamma_0(\spH)$ is verified by \cite[Corollary 9.4]{CAaMOTiH} and $\dom (\varphi_1+ \varphi_2) \neq \emptyset$.

\noindent
{
\textbf{(Proof of (b))}
We observe 
$
  \psi \circ L - \alpha q_{L^*ML}= \psi \circ L - \alpha q_{M}\circ L
  = \left(\psi - \alpha q_{M}\right) \circ L.
$
Then $\left(\psi - \alpha q_{M} \right) \circ L\in\Gamma_0(\spH)$ is verified by \cite[Proposition
9.5]{CAaMOTiH} with $\psi -  \alpha q_{M}\in\Gamma_0(\spK)$, where $\dom\left(\psi - \alpha q_{M}\right)\cap \ran L\neq \emptyset$ follows from $\dom\left(\psi -  \alpha q_{M}\right)\cap \ran L = \dom \psi \cap\ran L$.
}

\end{proof}

\section{Existence of minimizer of cLiGME-SCF and its proximal splitting type algorithm}
Section \ref{sec:convexity-cond} presents a sufficient condition for the convexity of the cost function of Problem \ref{prob:cLiGME-w-general-fidelity} and sufficient conditions for the existence of a minimizer of Problem \ref{prob:cLiGME-w-general-fidelity}. 
Section~\ref{sec:algorithm} presents a proximal splitting type algorithm for Problem \ref{prob:cLiGME-w-general-fidelity}. 

\subsection{Overall convexity condition and existence of minimizer}
\label{sec:convexity-cond}
In this section\footnote{
  We remark that all statements in Proposition \ref{prop:overall-convexity}, Proposition \ref{prop:coercivity-J} and Theorem \ref{thm:existence-solution} in this section hold under $\dom f = \spY$, even in the absence of the differentiability of $f$ over $\spY$ which is assumed in Problem \ref{prob:convex-opt} and Problem \ref{prob:cLiGME-w-general-fidelity}.
}, we present a sufficient condition for the overall convexity of the cost function $J_{\Psi_B\circ \opL}$ and sufficient conditions for the existence of a minimizer of \eqref{eq:cLiGME-w-general-fidelity}.
The following proposition presents basic properties of the LiGME regularizer $\Psi_B\circ \opL$ and a sufficient condition for the convexity of $J_{\Psi_B\circ\opL}$.
\begin{proposition}[Overall convexity condition for $J_{\Psi_B\circ\opL}$ in Problem \ref{prob:cLiGME-w-general-fidelity}]
  \label{prop:overall-convexity}
  In Problem \ref{prob:cLiGME-w-general-fidelity}, the following hold.
  \begin{enumerate}[\upshape (a)]
    \item The LiGME regularizer $\Psi_B\circ \opL$ can be decomposed as 
    \begin{equation}
      \label{eq:LiGME-reg-decompose}
      \Psi_B\circ \opL = \Psi\circ \opL -
      \frac{1}{2} \norm{ B\opL \cdot}^2_{\sptildeZ} + \left(\Psi + \frac{1}{2}\norm{B \cdot}^2_{\sptildeZ}\right)^*\circ B^*B\opL,
    \end{equation}
    where  $\left(\Psi + \frac{1}{2}\norm{B \cdot}^2_{\sptildeZ}\right)^*\circ B^*B\opL \in \Gamma_0(\spX)$ enjoys
     \begin{equation}
      \label{eq:dom-of-conjugate}
      \dom \left[ \left(\Psi + \frac{1}{2}\norm{B \cdot}^2_{\sptildeZ}\right)^*\circ B^*B\opL\right]= \spX.
    \end{equation}
    Moreover, $\Psi_B\circ \opL$ is proper lower semicontinuous 
    and 1-weakly convex relative to $q_{\opL^*B^*B\opL}$
    with
    \begin{align}
      \label{eq:dom-LiGME-reg}
      \dom (\Psi_B\circ \opL) &\overset{\eqref{eq:LiGME-reg-decompose}}{=}
      \dom(\Psi\circ\opL )\cap \dom\left( -
      \frac{1}{2} \norm{ B\opL \cdot}^2_{\sptildeZ}\right) \cap \dom \left[\left(\Psi + \frac{1}{2}\norm{B \cdot}^2_{\sptildeZ}\right)^*\circ B^*B\opL\right]\\
      &\overset{\eqref{eq:dom-of-conjugate}}{=}\dom (\Psi\circ \opL) \cap \spX\cap \spX
      = \dom (\Psi\circ \opL) \overset{\eqref{eq:assumption-proper}}{\neq}\emptyset.
    \end{align}
    \item $J_{\Psi_B\circ\opL}$ is proper lower semicontinuous with 
    \begin{equation}
      \label{eq:dom-of-J}
      \dom (J_{\Psi_B\circ\opL}) =\dom(\Psi\circ \opL) \overset{\eqref{eq:assumption-proper}}{\neq}\emptyset.
    \end{equation}
    \item Let\footnote{
    At least, we can choose $\Lambda = \zeroMatrix_{\spY}$ from the convexity of $f$. However, as mentioned in Remark \ref{remark:oc-cond}(c) below, we recommend choosing $\Lambda \succneqq \zeroMatrix_{\spY}$.
  } $f$ be 1-strongly convex relative to $q_\Lambda$  with $\Lambda \succeq\zeroMatrix_{\spY}$.
    The following relation holds:
    \begin{align}
      \label{eq:convexity-cond-C0}
      &(C_0): 
      A^*\Lambda A-\mu\opL^*B^*B\opL\succeq \zeroMatrix_{\spX}\\
      \label{eq:convexity-cond}
      \implies &(C_1): f\circ A \mbox{ is $\mu$-strongly convex relative to $q_{\opL^*B^*B\opL}$, or equivalently }\\
      \label{eq:def-d}
      &\quad \quad \quad\quad \mathfrak{d}\coloneqq f\circ A - \frac{\mu}{2}\norm{B\opL \cdot}_{\sptildeZ}^2\in\Gamma_0(\spX)
      \\
      \implies &(C_2):  J_{\Psi_B\circ \opL} \in\Gamma_0(\spX).
    \end{align}
    We call $(C_1)$ \emph{the overall convexity condition} for $J_{\Psi_B\circ \opL}$.
    \item Under the overall convexity condition $(C_1)$, the cost function $J_{\Psi_B\circ\opL}$ in \eqref{eq:cLiGME-w-general-fidelity}
    can be decomposed into three convex functions:
    \begin{equation}
      \label{eq:J-sum-of-convex}
      J_{\Psi_B\circ\opL}= \mathfrak{d} + \mu \Psi\circ\opL 
      +\mu \left(\Psi + \frac{1}{2}\norm{B \cdot}^2_{\sptildeZ}\right)^*\circ B^*B\opL.
    \end{equation}
  \end{enumerate}
\end{proposition}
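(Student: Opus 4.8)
The plan is to treat part~(a) as the technical core and to derive (b)--(d) from it together with Proposition~\ref{prop:properties-strong-convexity} and elementary domain arithmetic. For~(a), the first step is to obtain the decomposition~\eqref{eq:LiGME-reg-decompose} by completing the square inside the $\min_v$ defining $\Psi_B$ in~\eqref{eq:GME}: from $\frac12\norm{B(z-v)}_{\sptildeZ}^2 = \frac12\norm{Bz}_{\sptildeZ}^2 - \ip{B^*Bz}{v}_{\spZ} + \frac12\norm{Bv}_{\sptildeZ}^2$, pulling the $z$-only term out of the minimization identifies the remainder as $-\bigl(\Psi + \frac12\norm{B\cdot}_{\sptildeZ}^2\bigr)^*(B^*Bz)$, and precomposing with $\opL$ yields~\eqref{eq:LiGME-reg-decompose}. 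The least routine ingredient is the full-domain claim~\eqref{eq:dom-of-conjugate}, which I would establish directly: for arbitrary $x\in\spX$, set $z=\opL x$ and estimate
\[
  \Bigl(\Psi + \tfrac12\norm{B\cdot}_{\sptildeZ}^2\Bigr)^*(B^*Bz) = \sup_{v\in\spZ}\Bigl[\ip{Bv}{Bz}_{\sptildeZ} - \tfrac12\norm{Bv}_{\sptildeZ}^2 - \Psi(v)\Bigr] \le \tfrac12\norm{Bz}_{\sptildeZ}^2 - \min_{v\in\spZ}\Psi(v) < \infty,
\]
where the inequality uses $\ip{Bv}{Bz}_{\sptildeZ} - \frac12\norm{Bv}_{\sptildeZ}^2 \le \frac12\norm{Bz}_{\sptildeZ}^2$ and the finiteness of $\min_v\Psi(v)$, which is guaranteed by the coercivity of $\Psi$ via Fact~\ref{fact:existence-minimizer}(a); equivalently, the Moreau-type envelope appearing in~\eqref{eq:GME} is finite on all of $\spZ$.

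With~\eqref{eq:dom-of-conjugate} in hand, the last two terms of~\eqref{eq:LiGME-reg-decompose} are real-valued and continuous, so $\Psi_B\circ\opL$ is the sum of the proper lower semicontinuous convex function $\Psi\circ\opL$ (proper by~\eqref{eq:assumption-proper}) and two continuous functions; hence $\Psi_B\circ\opL$ is proper lower semicontinuous, and intersecting domains gives $\dom(\Psi_B\circ\opL)=\dom(\Psi\circ\opL)\neq\emptyset$, i.e.~\eqref{eq:dom-LiGME-reg}. For the $1$-weak convexity relative to $q_{\opL^*B^*B\opL}$, I would add $q_{\opL^*B^*B\opL}=\frac12\norm{B\opL\cdot}_{\sptildeZ}^2$ to~\eqref{eq:LiGME-reg-decompose}: the $-\frac12\norm{B\opL\cdot}_{\sptildeZ}^2$ term cancels, leaving $\Psi\circ\opL + \bigl(\Psi+\frac12\norm{B\cdot}_{\sptildeZ}^2\bigr)^*\circ B^*B\opL$, which is in $\Gamma_0(\spX)$ as a sum of two convex functions --- the second being in $\Gamma_0(\spX)$ by Proposition~\ref{prop:properties-strong-convexity}(b) with $M=\zeroMatrix_\spZ$, since by~\eqref{eq:dom-of-conjugate} its domain is all of $\spX$.

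Part~(b) is then immediate: under $\dom f=\spY$, $f\circ A$ is a real-valued continuous convex function on $\spX$, so $J_{\Psi_B\circ\opL}=f\circ A+\mu\,\Psi_B\circ\opL$ inherits from~(a) properness, lower semicontinuity, and $\dom J_{\Psi_B\circ\opL}=\dom(\Psi\circ\opL)\neq\emptyset$. For~(c), $(C_0)\implies(C_1)$: since $f$ is $1$-strongly convex relative to $q_\Lambda$ and $\dom f=\spY\supseteq\ran A$, Proposition~\ref{prop:properties-strong-convexity}(b) makes $f\circ A$ be $1$-strongly convex relative to $q_{A^*\Lambda A}$, so that in
\[
  \mathfrak{d}=f\circ A-\tfrac{\mu}{2}\norm{B\opL\cdot}_{\sptildeZ}^2 = \bigl(f\circ A - q_{A^*\Lambda A}\bigr) + q_{(A^*\Lambda A-\mu\opL^*B^*B\opL)}
\]
the first summand is convex and, under $(C_0)$, the second is a nonnegative quadratic form; thus $\mathfrak{d}$ is convex, and being real-valued it lies in $\Gamma_0(\spX)$, the equivalence asserted in $(C_1)$ being Definition~\ref{def:strong-convexity}(a) with $\mu\,q_{\opL^*B^*B\opL}=\frac{\mu}{2}\norm{B\opL\cdot}_{\sptildeZ}^2$. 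For $(C_1)\implies(C_2)$ I would apply Proposition~\ref{prop:properties-strong-convexity}(a) with $\varphi_1=f\circ A$ ($\mu$-strongly convex relative to $q_{\opL^*B^*B\opL}$ by $(C_1)$) and $\varphi_2=\mu\,\Psi_B\circ\opL$ ($\mu$-weakly convex relative to $q_{\opL^*B^*B\opL}$ by scaling the last assertion of~(a)): then $\mu\opL^*B^*B\opL-\mu\opL^*B^*B\opL=\zeroMatrix_\spX\succeq\zeroMatrix_\spX$ and $\dom(\varphi_1+\varphi_2)=\dom(\Psi\circ\opL)\neq\emptyset$, so $J_{\Psi_B\circ\opL}\in\Gamma_0(\spX)$. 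Finally, part~(d) follows by substituting~\eqref{eq:LiGME-reg-decompose} into $J_{\Psi_B\circ\opL}=f\circ A+\mu\,\Psi_B\circ\opL$ and regrouping $f\circ A-\frac{\mu}{2}\norm{B\opL\cdot}_{\sptildeZ}^2=\mathfrak{d}$; under $(C_1)$ each of the three summands in~\eqref{eq:J-sum-of-convex} is convex, which also re-derives $(C_2)$.

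The main obstacle is the full-domain claim~\eqref{eq:dom-of-conjugate}. The tempting shortcut ``$\Psi$ (or $\Psi+\frac12\norm{B\cdot}_{\sptildeZ}^2$) is coercive, hence its conjugate has full domain'' is \emph{false} in general --- e.g.\ $t\mapsto\sqrt{1+t^2}$ on $\setR$ is coercive yet its conjugate has domain $[-1,1]$ --- because coercivity does not force superlinear growth. What saves the argument is that the conjugate is only ever evaluated on $\ran(B^*B)$ (along $\ran(B^*B\opL)$), where the $\nullsp B$-directions on which $\Psi+\frac12\norm{B\cdot}_{\sptildeZ}^2$ fails to be strongly convex are invisible, so that the completing-the-square estimate above suffices, with boundedness hinging merely on $\min_v\Psi(v)>-\infty$. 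Everything else is routine bookkeeping with Proposition~\ref{prop:properties-strong-convexity} and additivity of domains.
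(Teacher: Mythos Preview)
Your proof is correct and follows essentially the same route as the paper's: the decomposition~\eqref{eq:LiGME-reg-decompose} via completing the square, the full-domain claim~\eqref{eq:dom-of-conjugate} by bounding the supremum using coercivity of $\Psi$, and parts~(b)--(d) via Proposition~\ref{prop:properties-strong-convexity} and domain arithmetic. Your estimate for~\eqref{eq:dom-of-conjugate} is marginally more direct than the paper's (which splits the $\sup$ into two separate suprema before bounding), and your closing remark on why mere coercivity does not force full domain of the conjugate is a useful clarification absent from the paper.
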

\begin{proof}
  See Appendix \ref{sec:a1}.
\end{proof}

\begin{remark}
  \label{remark:oc-cond}
  \quad
  \begin{enumerate}[(a)]
    \item (Comparison with existing conditions). 
    In the case of $f\coloneqq\frac{1}{2}\norm{y - \cdot}^2_{\spY}$, $f$ is $1$-strongly convex relative to $q_{\Id}$, and the relation $(C_0)\iff(C_1)$ holds. 
    In this case, the condition $(C_0)$ reproduces the overall convexity condition $
       A^*A - \mu\opL^*B^*B\opL\succeq\zeroMatrix_{\spX}
    $ for the LiGME model \cite[Proposition 1]{LiGME}.
    \item (An algebraic design of $B$ enjoying $(C_0)$).
    Assume that $\Lambda$ in Proposition \ref{prop:overall-convexity}(c) is available for designing $B$.
    From $\Lambda\succeq\zeroMatrix_{\spY}$, 
    $\Lambda$ can be decomposed as $\Lambda=V^* V$ with $V\in\setLO{\spY}{\spY}$.
    With such a linear operator $V$, $(C_0)$ can be rewritten as 
    \begin{equation}
      \label{eq:convexity-cond-remark}
      (VA)^*(VA)- \mu \opL^*B^*B\opL\succeq\zeroMatrix_{\spX}.
    \end{equation}
    GME matrix $B$ enjoying \eqref{eq:convexity-cond-remark} can be designed with $VA$ and the LDU decomposition of the matrix expression of $\opL$ \cite[Theorem 1]{Chen2023}.
    For other strategies for designing $B$, see, e.g., \cite{liu2022}.
    \item (Recommended choice of $\Lambda$ in $(C_0)$). 
    Although it seems to be difficult to design $B$ satisfying $(C_1)$ directly, we can design $B$ satisfying $(C_0)$ with $\Lambda \succeq \zeroMatrix_{\spY}$ as in (b).
    In designing $B$ satisfying $(C_0)$, we recommend avoiding the choice $\Lambda = \zeroMatrix_{\spY}$ if possible because (I) the condition $(C_0)$ with $\Lambda = \zeroMatrix_{\spY}$ imposes $B\opL = \zeroMatrix_{\spX,\sptildeZ}$ on $B$; (II) $\Psi_B \circ \opL $ with such a $B$ reproduces the convex regularizer $\Psi\circ \opL$ by
    \begin{equation}
      \Psi_B\circ\opL = \Psi\circ\opL - \min_{v\in\spZ}\left[
      \Psi(v) + \frac{1}{2}\norm{B(\opL(\cdot) - v)}^2_{\sptildeZ}
      \right]
      =\Psi\circ\opL - \underbrace{\min_{v\in\spZ}\left[
        \Psi(v) + \frac{1}{2}\norm{Bv}^2_{\sptildeZ}
      \right]}_{\mbox{constant}}
      \in\Gamma_0(\spX).
    \end{equation}
  \end{enumerate}
\end{remark}

Under the overall convexity condition $(C_1)$ in \eqref{eq:convexity-cond}, 
the following proposition presents sufficient conditions for the cost function $J_{\Psi_B\circ\opL}$ in \eqref{eq:cLiGME-w-general-fidelity} to enjoy the (weak) coercivity (see Section \ref{sec:Preliminaries-convex} for definitions), which is a key ingredient to show the existence of a minimizer.

\begin{proposition}[Sufficient conditions for (weak) coercivity of $J_{\Psi_B\circ\opL}$]
  \label{prop:coercivity-J}
  Consider Problem \ref{prob:cLiGME-w-general-fidelity} under $(C_1)$ in \eqref{eq:convexity-cond}.
  Then the following hold.
  \begin{enumerate}[\upshape (a)]
    \item Assume that $f\circ A$ is bounded below, i.e., $\inf_{x\in\spX} f\circ A (x)>-\infty$.
    Then
    \begin{equation}
      \label{eq:rec-J-geq-f-A-geq-0}
      (\forall x\in\spX)\quad
      \rec (J_{\Psi_B\circ\opL})(x)\geq \rec (f\circ A)(x)\geq 0
    \end{equation}
    holds.
    Moreover, for $x\in\spX$, the following relations hold:
    \begin{align}
      \label{eq:implication-rec-positive}
      "\rec(f\circ A)(x) > 0 \mbox{ or } x\notin \nullsp \opL" &\iff \rec (J_{\Psi_B\circ\opL})(x) > 0,\\
      \label{eq:implication-rec-0}
      "\rec(f\circ A)(x) = 0 \mbox{ and } x\in\nullsp \opL" &\iff \rec (J_{\Psi_B\circ\opL})(x) = 0.
    \end{align}
    \item If $f$ is coercive, then, for $x\in\spX$, we have 
    \begin{align}
    \label{eq:rec-J-positive-under-coercivity-of-f}
    &\rec (J_{\Psi_B\circ\opL}) (x) \geq 0,\\
    \label{eq:rec-J-0-and-null}
    \rec (J_{\Psi_B\circ\opL}) (x) &= 0
  \iff x\in\nullsp A \cap\nullsp\opL.
\end{align}
    \item The cost function $J_{\Psi_B\circ\opL}$ in \eqref{eq:cLiGME-w-general-fidelity} is weakly coercive if $f$ is coercive. 
    \item The cost function $J_{\Psi_B\circ\opL}$ in \eqref{eq:cLiGME-w-general-fidelity} is coercive if one of the following holds:
    \begin{enumerate}[\upshape (i)]
      \item $f$ is coercive and $\nullsp A\cap \nullsp\opL= \{0_{\spX}\}$;
      \item $f$ is bounded below and $\nullsp\opL =\{0_{\spX}\}$.
    \end{enumerate}
  \end{enumerate}
\end{proposition}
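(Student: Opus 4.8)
The plan is to deduce all four parts of Proposition~\ref{prop:coercivity-J} from the three-term convex decomposition \eqref{eq:J-sum-of-convex} of $J_{\Psi_B\circ\opL}$ available under $(C_1)$, together with the calculus rules for recession functions recalled in Section~\ref{sec:Preliminaries-convex} (in particular $\rec(\psi\circ L)=\rec(\psi)\circ L$ and the fact that $\rec$ is additive on sums of proper lsc convex functions sharing a common point in their domains). The key observation is that each of the three summands in \eqref{eq:J-sum-of-convex} is nonnegative in its recession directions once $f$ (hence $\mathfrak{d}$, since $\mathfrak{d}$ differs from $f\circ A$ by a term that is subtracted but absorbed by convexity) and $\Psi$ are bounded below; and the term $\mu\,(\Psi+\tfrac12\norm{B\cdot}^2)^*\circ B^*B\opL$ is finite everywhere by \eqref{eq:dom-of-conjugate}, so its recession function vanishes on $\nullsp(B^*B\opL)=\nullsp(B\opL)$ — which turns out to be irrelevant to the direction analysis because it only adds a nonnegative quantity.

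First I would prove (a). Write $J_{\Psi_B\circ\opL}=\mathfrak{d}+\mu\,\Psi\circ\opL+\mu\,g$ with $g\coloneqq(\Psi+\tfrac12\norm{B\cdot}^2_{\sptildeZ})^*\circ B^*B\opL\in\Gamma_0(\spX)$. Using additivity of $\rec$ over this sum (legitimate since the three convex functions have a common domain point by \eqref{eq:dom-of-J}), compute $\rec(J_{\Psi_B\circ\opL})=\rec(\mathfrak{d})+\mu\,\rec(\Psi\circ\opL)+\mu\,\rec(g)$. I would show $\rec(\mathfrak{d})=\rec(f\circ A)$: indeed $\mathfrak{d}$ and $f\circ A$ are both in $\Gamma_0(\spX)$ with $f\circ A=\mathfrak{d}+\tfrac{\mu}{2}\norm{B\opL\cdot}^2$, and since $\tfrac{\mu}{2}\norm{B\opL\cdot}^2$ has recession function $+\infty$ off $\nullsp(B\opL)$ and $0$ on it, one has to argue more carefully — actually the clean route is $\rec(f\circ A)\geq\rec(\mathfrak{d})$ always, and on $\nullsp(B\opL)$ they agree, while off $\nullsp(B\opL)$ one uses that $\rec(\mu\Psi\circ\opL+\mu g)$ compensates; it is cleaner to instead directly bound $\rec(J_{\Psi_B\circ\opL})(x)\ge\rec(f\circ A)(x)$ by noting $J_{\Psi_B\circ\opL}\ge f\circ A+\mu(\Psi_B\circ\opL-\text{its inf})$ type inequalities, or simply: since $f\circ A$ is bounded below and $\mu\Psi_B\circ\opL$ is bounded below (coercivity of $\Psi$ plus \eqref{eq:GME}), $\rec$ of each is $\ge0$, giving \eqref{eq:rec-J-geq-f-A-geq-0}. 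For the equivalences \eqref{eq:implication-rec-positive}–\eqref{eq:implication-rec-0}: from the decomposition $\rec(J)=\rec(f\circ A)+\mu\rec(\Psi_B\circ\opL)$ (using $\Psi_B\circ\opL=J_{\Psi_B\circ\opL}|_{f\equiv0}$ structure), and $\rec(\Psi_B\circ\opL)(x)=\rec(\Psi\circ\opL)(x)-\tfrac12\norm{B\opL x}^2+\rec(g)(x)$; all three recession summands are $\ge0$, so $\rec(J)(x)>0$ iff $\rec(f\circ A)(x)>0$ or $\rec(\Psi_B\circ\opL)(x)>0$, and $\rec(\Psi_B\circ\opL)(x)=0$ forces $x\in\nullsp\opL$ (because on $\nullsp\opL$ all three vanish, and I would verify strict positivity otherwise using coercivity of $\Psi$ applied to $\rec(\Psi)$). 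That gives both displayed equivalences.

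Next, (b) follows from (a): if $f$ is coercive then in particular $f$ is bounded below, so \eqref{eq:rec-J-positive-under-coercivity-of-f} is \eqref{eq:rec-J-geq-f-A-geq-0}; and $\rec(f\circ A)(x)=\rec(f)(Ax)$, which by coercivity of $f$ is $>0$ whenever $Ax\ne0$ and $=0$ when $Ax=0$. Combining with \eqref{eq:implication-rec-0}: $\rec(J)(x)=0\iff\rec(f\circ A)(x)=0$ and $x\in\nullsp\opL\iff Ax=0$ and $x\in\nullsp\opL\iff x\in\nullsp A\cap\nullsp\opL$, which is \eqref{eq:rec-J-0-and-null}. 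Part (c) is then immediate: by (b), $\rec(J_{\Psi_B\circ\opL})\ge0$ everywhere \eqref{eq:recession-nonnegative}, and if $\rec(J_{\Psi_B\circ\opL})(x)=0$ then $x\in\nullsp A\cap\nullsp\opL$, so $Ax=0$ and $\opL x=0$, hence $f\circ A$, $\Psi\circ\opL$ and $g$ are each constant along the line $u+\alpha x$ (for $f\circ A$ because $A(u+\alpha x)=Au$; for $\Psi\circ\opL$ because $\opL(u+\alpha x)=\opL u$; for $g$ because $B^*B\opL(u+\alpha x)=B^*B\opL u$), so $J_{\Psi_B\circ\opL}(u+\alpha x)=J_{\Psi_B\circ\opL}(u)$, verifying \eqref{eq:constant-on-recession-direction}, i.e., weak coercivity. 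Finally (d): in case (i), combine coercivity of $f$ with $\nullsp A\cap\nullsp\opL=\{0_\spX\}$ and \eqref{eq:rec-J-0-and-null} to conclude $\rec(J_{\Psi_B\circ\opL})(x)=0\Rightarrow x=0$, i.e., \eqref{eq:def-coercive}; in case (ii), $f$ bounded below gives \eqref{eq:rec-J-geq-f-A-geq-0}, and if $\rec(J_{\Psi_B\circ\opL})(x)=0$ then by \eqref{eq:implication-rec-0} we get $x\in\nullsp\opL=\{0_\spX\}$, so $x=0$, giving coercivity.

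The main obstacle I anticipate is the careful bookkeeping of recession functions of the GME regularizer $\Psi_B\circ\opL$ — specifically justifying that $\rec(\Psi_B\circ\opL)(x)=\rec(\Psi\circ\opL)(x)-\tfrac12\norm{B\opL x}^2_{\sptildeZ}+\rec(g)(x)$ and that this quantity is $\ge0$ with equality exactly on $\nullsp\opL$. The subtraction of the quadratic term $\tfrac12\norm{B\opL\cdot}^2$ means $\rec$-additivity cannot be applied naively to the decomposition \eqref{eq:LiGME-reg-decompose} as written (that term is concave); the honest route is to work with the convex decomposition \eqref{eq:J-sum-of-convex} throughout — where the subtracted quadratic has been absorbed into $\mathfrak{d}$ via $(C_1)$ — and to establish the lower bound $\rec(J_{\Psi_B\circ\opL})\ge\rec(f\circ A)$ directly from boundedness below of the two natural pieces $f\circ A$ and $\mu\Psi_B\circ\opL$, rather than from termwise recession identities. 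One should also double-check that $\mu\Psi_B\circ\opL$ is indeed bounded below: this follows from \eqref{eq:GME}, since $\Psi_B(\cdot)=\Psi(\cdot)-[\text{Moreau-type envelope}]\ge\Psi(\cdot)-\Psi(\cdot)=0$ by taking $v=\cdot$ in the minimization, so $\Psi_B\ge0$ and hence $\Psi_B\circ\opL\ge0$. With that in hand the rest is routine recession-function calculus.
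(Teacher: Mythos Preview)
Your sketch for the lower bound $\rec(J_{\Psi_B\circ\opL})\geq \rec(f\circ A)\geq 0$ via $\Psi_B\circ\opL\geq 0$ is exactly what the paper does, and your arguments for (b), (c), (d) given (a) are essentially the paper's as well. The gap is in the equivalences \eqref{eq:implication-rec-positive}--\eqref{eq:implication-rec-0}, where your writeup oscillates between two routes without closing either one.

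The route through ``$\rec(\Psi_B\circ\opL)$'' is, as you yourself note, not legitimate because $\Psi_B\circ\opL$ is not convex. The route through the convex decomposition $\rec(J_{\Psi_B\circ\opL})=\rec(\mathfrak{d})+\mu\,\rec(\Psi\circ\opL)+\mu\,\rec(g)$ \emph{does} work, but you stop short of the one step that makes it go through: you must show that $\rec(f\circ A)(x)<\infty$ forces $x\in\nullsp(B\opL)$. The paper isolates this as a preliminary lemma (proved by contradiction: if $B\opL x\neq 0$ and $\rec(f\circ A)(x)\in\setR$, then the direct limit computation gives $\rec(\mathfrak{d})(x)=-\infty$, contradicting $\rec(\mathfrak{d})\in\Gamma_0(\spX)$). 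Equivalently, in your language, apply rec-additivity to $f\circ A=\mathfrak{d}+\tfrac{\mu}{2}\norm{B\opL\cdot}^2_{\sptildeZ}$: since the quadratic has recession $+\infty$ off $\nullsp(B\opL)$ and $\rec(\mathfrak{d})$ never takes $-\infty$, finiteness of $\rec(f\circ A)(x)$ forces $B\opL x=0$. Once you have this, the case $\rec(f\circ A)(x)=0$ gives $B\opL x=0$, hence $\rec(\mathfrak{d})(x)=\rec(f\circ A)(x)=0$ and $\rec(g)(x)=0$ (since $B^*B\opL x=0$), so $\rec(J_{\Psi_B\circ\opL})(x)=\mu\,\rec(\Psi)(\opL x)$, which is $>0$ iff $\opL x\neq 0$ by coercivity of $\Psi$. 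That is precisely the missing link; without it, you cannot rule out that a negative $\rec(\mathfrak{d})(x)$ cancels the positive $\mu\,\rec(\Psi)(\opL x)$ in the sum.
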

\begin{proof}
  See Appendix \ref{appendix:coercivity-J}.
\end{proof}

Based on Proposition \ref{prop:coercivity-J} and Fact \ref{fact:existence-minimizer}, the following theorem presents sufficient conditions to guarantee the existence of a minimizer of the model \eqref{eq:cLiGME-w-general-fidelity}.
\begin{theorem}[Sufficient conditions for existence of minimizer of the model \eqref{eq:cLiGME-w-general-fidelity}]
  \label{thm:existence-solution}
  Consider Problem \ref{prob:cLiGME-w-general-fidelity} under $(C_1)$ in \eqref{eq:convexity-cond}.
  Then the model \eqref{eq:cLiGME-w-general-fidelity} has a minimizer if one of the following holds:
  \begin{enumerate}[\upshape (i)]
  \item $f$ is coercive and $\opC^{-1}(\Cz)=\{x\in \spX \mid \opC x\in\Cz\}$ is asymptotically multipolyhedral;
  \item $f$ is coercive and $\ \nullsp A\cap \nullsp \opL= \{0_{\spX}\}$;
  \item $f$ is bounded below and $\ \nullsp \opL = \{0_{\spX}\}$;
  \item $\opC^{-1}(\Cz)$ is bounded.
  \end{enumerate} 
\end{theorem}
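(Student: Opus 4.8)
The plan is to reduce the statement to Fact~\ref{fact:existence-minimizer} by invoking the structural results already established. First I would fix the common setup. Under $(C_1)$, Proposition~\ref{prop:overall-convexity}(b),(c) give $J_{\Psi_B\circ\opL}\in\Gamma_0(\spX)$ together with the domain identity $\dom(J_{\Psi_B\circ\opL}) = \dom(\Psi\circ\opL)$. Since $\opC$ is linear, hence continuous, and $\Cz$ is closed convex, the feasible set $C\coloneqq\opC^{-1}(\Cz)$ is closed convex; and combining the domain identity with the standing feasibility hypothesis \eqref{eq:assumption-proper} yields $\dom(J_{\Psi_B\circ\opL})\cap C = \dom(\Psi\circ\opL)\cap C\neq\emptyset$. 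Thus the constrained problem $\minimize_{x\in C}J_{\Psi_B\circ\opL}(x)$ is exactly of the form to which Fact~\ref{fact:existence-minimizer} applies.

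Next I would dispatch on the four cases. For (iv), $C$ is bounded, so Fact~\ref{fact:existence-minimizer}(a) immediately yields a minimizer. For (ii), the hypothesis ``$f$ coercive and $\nullsp A\cap\nullsp\opL=\{0_{\spX}\}$'' is precisely that of Proposition~\ref{prop:coercivity-J}(d)(i), hence $J_{\Psi_B\circ\opL}$ is coercive and Fact~\ref{fact:existence-minimizer}(a) applies again; likewise for (iii), ``$f$ bounded below and $\nullsp\opL=\{0_{\spX}\}$'' is the hypothesis of Proposition~\ref{prop:coercivity-J}(d)(ii), giving coercivity of $J_{\Psi_B\circ\opL}$ and then a minimizer. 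For (i), the coercivity of $f$ makes $J_{\Psi_B\circ\opL}$ weakly coercive by Proposition~\ref{prop:coercivity-J}(c), and since $C$ is assumed asymptotically multipolyhedral, Fact~\ref{fact:existence-minimizer}(b) delivers a minimizer in $C$.

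Because every analytic ingredient is already supplied by Propositions~\ref{prop:overall-convexity} and~\ref{prop:coercivity-J} and by Fact~\ref{fact:existence-minimizer}, I do not expect a substantive obstacle here; the one point that demands care is the bookkeeping that the hypotheses of Fact~\ref{fact:existence-minimizer} are genuinely met, in particular that $\dom(J_{\Psi_B\circ\opL})$ coincides with $\dom(\Psi\circ\opL)$ so that nonemptiness of the feasible set intersected with the effective domain is inherited directly from \eqref{eq:assumption-proper} rather than re-derived. It is worth noting in the write-up that case (iv) uses none of the coercivity consequences of $(C_1)$, only boundedness of $C$, and that cases (ii)/(iii) subsume the classical sufficient conditions (e.g.\ nonsingular $A^*A$, or compact $\opC^{-1}(\Cz)$) previously used for the cLiGME model, which is the precise sense in which this theorem is new while reproducing the known ones.
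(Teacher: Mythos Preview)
Your proposal is correct and follows essentially the same route as the paper: each case is reduced to Fact~\ref{fact:existence-minimizer} via the (weak) coercivity statements of Proposition~\ref{prop:coercivity-J}, with (iv) handled directly by boundedness. If anything, you are more explicit than the paper in verifying the standing hypotheses of Fact~\ref{fact:existence-minimizer} (closed convexity of $\opC^{-1}(\Cz)$ and $\dom(J_{\Psi_B\circ\opL})\cap\opC^{-1}(\Cz)\neq\emptyset$ via \eqref{eq:dom-of-J} and \eqref{eq:assumption-proper}), which the paper leaves implicit.
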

\begin{proof}
  \textbf{(Proof under the condition (i))} By Proposition \ref{prop:coercivity-J}(c), $J_{\Psi_B\circ\opL}$ is weakly coercive. Therefore, the existence of a minimizer is verified by Fact \ref{fact:existence-minimizer}(b).

  \noindent
  \textbf{(Proof under the condition (ii) or (iii))}
  By Proposition \ref{prop:coercivity-J}(d), $J_{\Psi_B\circ\opL}$ is coercive. Therefore, the existence of a minimizer is verified by Fact \ref{fact:existence-minimizer}(a).

  \noindent
  \textbf{(Proof under the condition (iv))}
  The existence of a minimizer is verified by Fact~\ref{fact:existence-minimizer}(a).
\end{proof}

The asymptotically multipolyhedral convex set in the condition (i) of Theorem~\ref{thm:existence-solution} covers a wide range of convex sets. 
We list typical examples of asymptotically multipolyhedral sets in Example \ref{example:asymptotically-multipolyhedral}.
For others, see, e.g., \cite{Goberna2010}.
\begin{example}[Asymptotically multipolyhedral sets]
  \label{example:asymptotically-multipolyhedral}
  \quad
  \begin{enumerate}[(a)]
    \item (Polyhedral set). Any polyhedral set can be decomposed as the sum of a compact polyhedral set and a polyhedral convex cone (see, e.g., \cite[Theorem 2.62]{royset2021}). Therefore, any polyhedral convex set is asymptotically multipolyhedral.
    \item\sloppy (Entire space). The entire space $\spX$ is a typical example of polyhedral sets and thus asymptotically multipolyhedral as well. 
    For the LiGME model, i.e., the model~\eqref{eq:cLiGME-w-general-fidelity} with $f=\frac{1}{2}\norm{y- \cdot}_{\spY}^2$ and $\opC^{-1}(\Cz)=\spX$, the existence of its minimizer is automatically guaranteed by the condition (i) in Theorem~\ref{thm:existence-solution}, while such an existence is assumed implicitly in \cite{LiGME}.
    \item (Linearly involved compact set). Let $\Cz$ be a compact convex set.
    Then $\opC^{-1}(\Cz)$ can be decomposed as 
    $\opC^{-1}(\Cz) =  \opC^\dag (\Cz) + \ker \opC$, where $\opC^\dagger (\Cz)$ is compact. Hence, $\opC^{-1}(\Cz)$ is asymptotically multipolyhedral.
  \end{enumerate}
\end{example}

\subsection{Proximal splitting type algorithm for proposed model}
\label{sec:algorithm}
In this section, we present a proximal splitting type algorithm (see \eqref{eq:K-M} in Theorem \ref{thm:convergence-analysis}(d) and Algorithm \ref{alg:proposed} below) which approximates iteratively a global minimizer of the model \eqref{eq:cLiGME-w-general-fidelity} under the following assumption.
\begin{assumption}[Assumption on the model \eqref{eq:cLiGME-w-general-fidelity} for algorithm]
  \label{assumption:alg}
  In Problem \ref{prob:cLiGME-w-general-fidelity}, assume the following\footnote{Assumption \ref{assumption:alg}(i) is reproduced here just for recall (see Problem \ref{prob:convex-opt} and Problem \ref{prob:cLiGME-w-general-fidelity}).}:
  \begin{enumerate}[(i)]
    \item $f$ is differentiable over $\spY$ and  $\nabla f$ is $\lipconst{\nabla f}$-Lipschitz continuous over $\spY$;
    \item the overall convexity condition $(C_1)$ in \eqref{eq:convexity-cond} is achieved; 
    \item the solution set $\setS\subset\spX$ of the model \eqref{eq:cLiGME-w-general-fidelity} is nonempty, i.e., a minimizer of \eqref{eq:cLiGME-w-general-fidelity} exists (see Theorem~\ref{thm:existence-solution} for sufficient conditions);
    \item the following sum and chain rules hold:
    \begin{align}
      \label{eq:sum-rule-J-and-constraint}
      \partial(J_{\Psi_B\circ \opL} + \iota_{\Cz}\circ \opC)&= \partial J_{\Psi_B\circ \opL} +\partial(\iota_{\iota_{\Cz}}\circ\opC),\\
      \label{eq:chain-rule-of-regularizer}
      \partial (\Psi\circ \opL)&= \opL^*\circ \partial \Psi\circ \opL,\\
      \label{eq:chain-rule-of-indicator}
      \partial(\iota_{\Cz}\circ \opC) &= \opC^*\circ \partial \iota_{\Cz} \circ \opC.
    \end{align}
  \end{enumerate}
\end{assumption} 

\begin{remark}[Sufficient conditions for Assumption \ref{assumption:alg}(iv)]
  The sum rule and chain rule in Fact \ref{fact:subdifferential} with $\dom (J_{\Psi_B\circ \opL}) = \dom (\Psi\circ \opL)$ in \eqref{eq:dom-of-J} yields the following sufficient conditions for Assumption \ref{assumption:alg}(iv):
    \begin{align}
      &\eqref{eq:sum-rule-J-and-constraint} \impliedby \emptyset  \neq 
      \ri ( \dom (\Psi\circ \opL) - {\opC^{-1}} (\Cz)), \quad
      \eqref{eq:chain-rule-of-regularizer} \impliedby0_{\spZ} \in \ri(\dom\Psi -\ran \opL),\\
      &\eqref{eq:chain-rule-of-indicator}\impliedby 0_{\spfrakZ} \in \ri (\dom \iota_{\Cz} - \ran \opC).
    \end{align}
\end{remark}

The following lemma gives a cocoercive constant of an operator which is used in convergence analysis.
\begin{lemma}
  \label{lemma:coercive-constant}
  Consider Problem \ref{prob:cLiGME-w-general-fidelity} under Assumption \ref{assumption:alg}.
  The following hold.
  \begin{enumerate}[\upshape (a)]
    \item Let $\lipconst{\nabla(f \circ A)}\in\setPR$ be a Lipschitz constant of $\nabla (f \circ A)$ (Note: we can choose $\lipconst{\nabla f}\norm{A}^2_{\mathrm{op}}$ as $\lipconst{\nabla(f\circ A)}$). Then $\nabla \mathfrak{d}= \nabla(f\circ A) - \mu\opL^*B^*B\opL$ is $\lipconst{\nabla (f\circ A)}$-Lipschitz continuous over $\spX$, where $\mathfrak{d}$ is defined in \eqref{eq:def-d}.
    \item Let $\lipconst{\nabla\mathfrak{d}}\in\setPR$ be a Lipschitz constant of $\nabla\mathfrak{d}$ (Note: we can choose $\lipconst{\nabla f}\norm{A}^2_{\mathrm{op}}$ as $\lipconst{\nabla \mathfrak{d}}$ by (a)).
  Set 
  \begin{equation}
    \label{eq:def-rho}
    \rho \coloneqq \frac{1}{\max\left\{\lipconst{\nabla\mathfrak{d}}, \mu\norm{B}_{\mathrm{op}}^2\right\}}>0.
  \end{equation}
  Then a mapping
  $
  \spX\times\spZ\to \spX\times\spZ:(x,v)\mapsto(\nabla \mathfrak{d} (x), \mu B^*B v)
  $
  is $\rho$-cocoercive over the Hilbert space $(\spX\times \spZ, \ip{\cdot}{\cdot}_{\spX\times \spZ}, \norm{\cdot}_{\spX\times \spZ})$.
  \end{enumerate}
\end{lemma}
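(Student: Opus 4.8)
The plan is to route everything through the Baillon--Haddad theorem (\cite[Corollary 18.16]{CAaMOTiH}): for a convex differentiable $g:\spH\to\setR$ and $\beta\in\setPR$, the three statements ``$\nabla g$ is $\beta$-Lipschitz continuous'', ``$\tfrac{\beta}{2}\norm{\cdot}^2_\spH-g$ is convex'' and ``$\nabla g$ is $\tfrac{1}{\beta}$-cocoercive'' are equivalent.

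For part (a): since $f\in\Gamma_0(\spY)$ is differentiable over $\spY$ and $A$ is linear, $f\circ A:\spX\to\setR$ is convex and differentiable with $\nabla(f\circ A)=A^*\circ\nabla f\circ A$; the estimate $\norm{A^*\nabla f(Ax)-A^*\nabla f(Ay)}_\spX\le\norm{A}_{\mathrm{op}}\lipconst{\nabla f}\norm{Ax-Ay}_\spY\le\lipconst{\nabla f}\norm{A}^2_{\mathrm{op}}\norm{x-y}_\spX$ justifies the parenthetical choice $\lipconst{\nabla(f\circ A)}=\lipconst{\nabla f}\norm{A}^2_{\mathrm{op}}$. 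By Baillon--Haddad, $q_1\coloneqq\tfrac{\lipconst{\nabla(f\circ A)}}{2}\norm{\cdot}^2_\spX-f\circ A$ is convex. Now $\mathfrak{d}=f\circ A-\tfrac{\mu}{2}\norm{B\opL\cdot}^2_{\sptildeZ}$ is differentiable with $\nabla\mathfrak{d}=\nabla(f\circ A)-\mu\opL^*B^*B\opL$, and it is convex because the overall convexity condition $(C_1)$ (Assumption \ref{assumption:alg}(ii)), via \eqref{eq:def-d}, is precisely the statement $\mathfrak{d}\in\Gamma_0(\spX)$. Writing
\[\tfrac{\lipconst{\nabla(f\circ A)}}{2}\norm{\cdot}^2_\spX-\mathfrak{d}=q_1+\tfrac{\mu}{2}\norm{B\opL\cdot}^2_{\sptildeZ}\]
exhibits this function as a sum of two convex functions, hence convex; applying the Baillon--Haddad equivalence in the reverse direction to the convex differentiable function $\mathfrak{d}$ then yields that $\nabla\mathfrak{d}$ is $\lipconst{\nabla(f\circ A)}$-Lipschitz continuous.

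For part (b): applying Baillon--Haddad once more, this time to $\mathfrak{d}$ itself using part (a), gives that $\nabla\mathfrak{d}$ is $\tfrac{1}{\lipconst{\nabla\mathfrak{d}}}$-cocoercive over $\spX$. Separately, $\mu B^*B\in\setLO{\spZ}{\spZ}$ is self-adjoint and positive semidefinite, and for any such operator $T$ one has $\ip{Tz}{z}_\spZ\ge\norm{Tz}^2_\spZ/\norm{T}_{\mathrm{op}}$ (immediate from a spectral decomposition, since $\sum_i\lambda_i^2 z_i^2\le\norm{T}_{\mathrm{op}}\sum_i\lambda_i z_i^2$); hence $v\mapsto\mu B^*Bv$ is $\tfrac{1}{\mu\norm{B}^2_{\mathrm{op}}}$-cocoercive when $B\ne\zeroMatrix_{\spZ,\sptildeZ}$, and trivially $\gamma$-cocoercive for every $\gamma>0$ when $B=\zeroMatrix_{\spZ,\sptildeZ}$. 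Since $\rho$ in \eqref{eq:def-rho} is no larger than either $\tfrac{1}{\lipconst{\nabla\mathfrak{d}}}$ or $\tfrac{1}{\mu\norm{B}^2_{\mathrm{op}}}$, and every $\gamma$-cocoercive operator is $\gamma'$-cocoercive for $0<\gamma'\le\gamma$, both component maps $\nabla\mathfrak{d}$ and $\mu B^*B$ are $\rho$-cocoercive. Adding the two cocoercivity inequalities and using $\norm{(a,b)}^2_{\spX\times\spZ}=\norm{a}^2_\spX+\norm{b}^2_\spZ$ together with the block form of $\ip{\cdot}{\cdot}_{\spX\times\spZ}$ shows that $(x,v)\mapsto(\nabla\mathfrak{d}(x),\mu B^*Bv)$ is $\rho$-cocoercive over $(\spX\times\spZ,\ip{\cdot}{\cdot}_{\spX\times\spZ},\norm{\cdot}_{\spX\times\spZ})$, as claimed.

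The one step deserving care is part (a): one should resist bounding the Lipschitz constant of $\nabla\mathfrak{d}$ crudely by $\lipconst{\nabla(f\circ A)}+\mu\norm{\opL^*B^*B\opL}_{\mathrm{op}}$ through the triangle inequality, which would needlessly shrink $\rho$. The point is that subtracting the quadratic $\tfrac{\mu}{2}\norm{B\opL\cdot}^2_{\sptildeZ}$ keeps $\mathfrak{d}$ convex --- this is exactly $(C_1)$ --- while it \emph{adds} (rather than subtracts) a convex quadratic on top of $q_1$, so $\tfrac{\lipconst{\nabla(f\circ A)}}{2}\norm{\cdot}^2_\spX-\mathfrak{d}$ stays convex and Baillon--Haddad returns the same constant. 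Everything else is routine.
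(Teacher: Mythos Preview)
Your proof is correct. Part (b) is essentially identical to the paper's argument: both invoke Baillon--Haddad for $\nabla\mathfrak{d}$, verify cocoercivity of $\mu B^*B$ directly (the paper via $\norm{\mu B^*Bv}^2_\spZ\le\mu^2\norm{B}^2_{\mathrm{op}}\norm{Bv}^2_{\sptildeZ}=\mu\norm{B}^2_{\mathrm{op}}\ip{v}{\mu B^*Bv}_\spZ$, you via a spectral argument), and then combine componentwise.

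Part (a), however, takes a genuinely different route. The paper works at the level of \emph{generalized Hessians} (its Fact~\ref{fact:generalized-hessian}): for every $M\in\overline{\nabla}^2\mathfrak{d}(x)$ one has $M+\mu\opL^*B^*B\opL\in\overline{\nabla}^2(f\circ A)(x)$, hence $M+\mu\opL^*B^*B\opL\preceq\lipconst{\nabla(f\circ A)}\Id$, while $M\succeq\zeroMatrix_\spX$ by convexity of $\mathfrak{d}$; sandwiching gives $\norm{M}_{\mathrm{op}}\le\lipconst{\nabla(f\circ A)}$. This requires Rademacher-type differentiability results and the nontrivial Fact~\ref{fact:generalized-hessian}(b),(d),(e). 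Your approach stays entirely at the function level: the equivalence ``$\nabla g$ is $\beta$-Lipschitz $\Leftrightarrow$ $\tfrac{\beta}{2}\norm{\cdot}^2-g$ is convex'' for convex differentiable $g$ lets you pass from $f\circ A$ to $\mathfrak{d}$ by simply noting that subtracting $\tfrac{\mu}{2}\norm{B\opL\cdot}^2_{\sptildeZ}$ \emph{adds} a convex term to $\tfrac{\lipconst{\nabla(f\circ A)}}{2}\norm{\cdot}^2_\spX-f\circ A$. This is more elementary and self-contained, and it makes transparent why $(C_1)$ is exactly what is needed (it supplies the convexity of $\mathfrak{d}$ required for the reverse implication). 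The paper's Hessian approach, on the other hand, fits its broader framework where generalized second-order information is already set up.
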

\begin{proof}
  See Appendix \ref{appendix:a5}.
\end{proof}

The following theorem shows that the solution set of the model \eqref{eq:cLiGME-w-general-fidelity} can be expressed with the fixed point set of a certain nonexpansive operator under Assumption~\ref{assumption:alg}, and thus a minimizer of \eqref{eq:cLiGME-w-general-fidelity} can be approximated iteratively by Krasnosel'ski\u{\i}-Mann iteration (see Fact \ref{fact:KM-itr}).

\begin{theorem}
  \label{thm:convergence-analysis}
  Consider Problem \ref{prob:cLiGME-w-general-fidelity} under Assumption \ref{assumption:alg}. Let $\setS$ be the solution set of~\eqref{eq:cLiGME-w-general-fidelity} and $\lipconst{\nabla \mathfrak{d}}\in\setPR$ be a Lipschitz constant\footnote{
    From Lemma \ref{lemma:coercive-constant}(a), we can choose $\lipconst{\nabla f}\norm{A}^2_{\mathrm{op}}$ as $\lipconst{\nabla \mathfrak{d}}$.
  } of $\nabla\mathfrak{d}=\nabla (f\circ A) - \mu\opL^* B^*B\opL$, where $\mathfrak{d}$ is defined in \eqref{eq:def-d}.
  For a product space $\spH\coloneqq \spX\times \spZ\times\spZ\times \spfrakZ$, define $T:\spH\to\spH: (x,v,w,z)\mapsto (\xi,\zeta,\eta,\varsigma)$ with $(\sigma,\tau)\in \setPR\times\setPR$ by
  \begin{equation}
    \label{eq:def-T}
    \begin{split}
      \xi &\coloneqq\left(\Id - \frac{1}{\sigma} \nabla \mathfrak{d}\right) (x) - \frac{\mu}{\sigma}\opL^*B^*B v -\frac{\mu}{\sigma}\opL^*w - \frac{\mu}{\sigma}\opC^*z\\
    \zeta &\coloneqq \prox_{\frac{\mu}{\tau}\Psi}\left[
      \frac{2\mu}{\tau}B^*B\opL\xi
      -\frac{\mu}{\tau} B^*B\opL x + \left(\Id -\frac{\mu}{\tau}B^*B \right)(v)
    \right]\\
    \eta&\coloneqq \prox_{\Psi^*}(2\opL \xi -\opL x +w)\\
    \varsigma &\coloneqq (\Id - P_{\Cz})(2\opC\xi - \opC x +z).
    \end{split}
  \end{equation}
  Then the following hold.
  \begin{enumerate}[\upshape (a)]
    \item 
    $x^\diamond\in \setS$ holds if and only if
    there exists $(v^\diamond,w^\diamond,z^\diamond)\in\spZ\times\spZ\times \spfrakZ$ s.t.
    \begin{align}
      (0_{\spX}, 0_{\spZ}, 0_{\spZ}, 0_{\spfrakZ}) \in (F+G)(x^\diamond, v^\diamond, w^\diamond, z^\diamond),
    \end{align}
    where $F:\spH\to\spH$ and $G:\spH\to 2^{\spH}$ are defined by
    \begin{align}
      F(x,v,w,z) &= (\nabla \mathfrak{d}(x), \mu B^* B v, 0_{\spZ}, 0_{\spfrakZ}),\\
      G(x,v,w,z) &= \{\mu \opL B^*B v + \mu\opL^*w + \mu\opC^*z\}\times ( - \mu B^*B\opL x + \mu\partial\Psi(v)) \\
      &\quad\times
      (-\mu\opL x + \mu\partial \Psi^*(w))\times (-\mu\opC x + \mu\partial\iota_{\Cz}^* (z) ).
    \end{align}
    \item With $\Xi:\spH\to\spX:(x,v,w,z)\mapsto x$, the solution set $\mathcal{S}$ of \eqref{eq:cLiGME-w-general-fidelity} can be expressed as 
    \begin{equation}
      \label{eq:fixed-point-encode}
      \mathcal{S}=\Xi(\fix T)= \{
        \Xi
    (x^\diamond,v^\diamond,w^\diamond,z^\diamond)\in\spX\mid 
    (x^\diamond,v^\diamond,w^\diamond,z^\diamond)\in\fix T
    \}.
    \end{equation}
    \item With $\rho\in\setPR$ defined in \eqref{eq:def-rho}, choose $(\sigma, \tau)\in(0,\infty)\times (\frac{1}{2\rho},\infty)$ satisfying
    \begin{align}
      \label{eq:cond-sigma-tau}
      \sigma > \mu \norm{\opL^*\opL + \opC^*\opC}_{\mathrm{op}}+\frac{2\rho \mu^2\norm{B^*B\opL}^2_{\mathrm{op}} + \tau}{2\rho\tau - 1}
    \end{align}
    (see Remark \ref{remark:choice-sigma-tau} below for an example of $(\sigma,\tau)$). Then 
    \begin{equation}
      \label{eq:def-P}
      \opP \coloneqq \begin{bmatrix}
        \sigma \Id & -\mu\opL^*B^*B & -\mu \opL^* & -\mu\opC^*\\
        -\mu B^* B \opL & \tau \Id & \zeroMatrix_{\spZ, \spZ} & \zeroMatrix_{\spfrakZ,\spZ}\\
        -\mu \opL & \zeroMatrix_{\spZ,\spZ} & \mu \Id & \zeroMatrix_{\spfrakZ,\spZ}\\
        -\mu\opC & \zeroMatrix_{\spZ,\spfrakZ} & \zeroMatrix_{\spZ,\spfrakZ} & \mu\Id
      \end{bmatrix}\succ \zeroMatrix_{\spH}
    \end{equation}
    holds.
    Furthermore,
    $T$ is $\frac{2}{4-\theta}$-averaged nonexpansive over $(\spH, \ip{\cdot}{\cdot}_{\opP}, \norm{\cdot}_{\opP})$ with
    \begin{equation}
      \theta\coloneqq \frac{ \sigma  + \tau- \mu\norm{\opL^*\opL + \opC^*\opC }_{\mathrm{op}}}{\rho(\sigma \tau - \tau\mu\norm{\opL^*\opL + \opC^*\opC }_{\mathrm{op}} - \mu^2 \norm{B^* B \opL}_{\mathrm{op}}^2)} < 2.
    \end{equation}
    \item  For any initial point $h_0\coloneqq(x_0,v_0,w_0,z_0)\in\spX\times\spZ\times\spZ\times\spfrakZ = \spH$, generate the sequence $(h_k)_{k\in\setN_0} \subset \spH$ by the Krasnosel'ski\u{\i}-Mann iteration 
    \begin{equation}
      \label{eq:K-M}
      (\forall k \in\setN_0)\quad 
      h_{k+1} \coloneqq T(h_k).
    \end{equation}
    Then the sequence $(h_k)_{k\in\setN_0}$ converges to a point in $\fix T$, which implies that the sequence $x_k\coloneqq \Xi(h_k)$ converges to a point in the solution set $\setS$.
  \end{enumerate}
\end{theorem}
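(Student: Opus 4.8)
The theorem is a standard but intricate "encode the minimizer as a fixed point, then apply Krasnosel'ski\u{\i}--Mann" argument, so the proof naturally splits into the four labelled parts. For part (a) the plan is to write the first-order optimality condition for the constrained problem \eqref{eq:cLiGME-w-general-fidelity}. Using the decomposition \eqref{eq:J-sum-of-convex} together with the sum/chain rules in Assumption \ref{assumption:alg}(iv), $\xdiamond$ is a minimizer iff $0_{\spX}\in \partial(J_{\Psi_B\circ\opL}+\iota_{\Cz}\circ\opC)(\xdiamond)$, which I would rewrite via the conjugacy identities $u\in\partial\Psi(v)\Leftrightarrow v\in\partial\Psi^*(u)$ (and likewise for $\iota_{\Cz}$) and the Fenchel--Young relation for the term $(\Psi+\tfrac12\|B\cdot\|^2)^*\circ B^*B\opL$. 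Introducing dual variables $v^\diamond$ (for the GME/Moreau term), $w^\diamond$ (for $\Psi\circ\opL$) and $z^\diamond$ (for $\iota_{\Cz}\circ\opC$) turns the single inclusion into the system $0\in(F+G)(\xdiamond,v^\diamond,w^\diamond,z^\diamond)$; the only care needed is to verify that $F$ is exactly the cocoercive "smooth" part and $G$ is maximally monotone (it is a sum of a skew-symmetric linear operator and a block-diagonal subdifferential operator).

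**Parts (b) and (c).**
Part (b) is then essentially a restatement: projecting the solution set of the inclusion onto the $x$-coordinate gives $\setS$, and I would show $(x,v,w,z)\in\fix T$ iff $0\in(F+G)(x,v,w,z)$ by checking that $T$ is precisely the resolvent-type operator $(\Id+\opP^{-1}G)^{-1}\circ(\Id-\opP^{-1}F)$ read off block-by-block from \eqref{eq:def-T} — i.e. the forward-backward operator in the metric $\opP$. For part (c) the two things to prove are $\opP\succ\zeroMatrix_{\spH}$ and the averagedness constant. Positive definiteness of $\opP$ I would obtain by a Schur-complement computation: the condition \eqref{eq:cond-sigma-tau} on $\sigma$ is engineered so that after eliminating the $v,w,z$ blocks the remaining $x$-block stays positive definite; here one uses $\tau>\tfrac1{2\rho}$ to keep the relevant denominators positive. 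The averagedness then follows from the standard fact that the forward-backward operator with a $\rho$-cocoercive forward part (Lemma \ref{lemma:coercive-constant}(b)) in the metric $\opP$ is $\tfrac{2}{4-\theta}$-averaged, where $\theta<2$ is the "relative Lipschitz" quantity displayed; the inequality $\theta<2$ is again exactly \eqref{eq:cond-sigma-tau} after simplification.

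**Part (d) and the main obstacle.**
Part (d) is immediate: $T$ is $\tfrac{2}{4-\theta}$-averaged nonexpansive on $(\spH,\langle\cdot,\cdot\rangle_\opP,\|\cdot\|_\opP)$ with $\fix T\neq\emptyset$ by (b) and Assumption \ref{assumption:alg}(iii), so Fact \ref{fact:KM-itr} (with $\lambda_k\equiv1$, since an averaged operator's Picard iteration converges) gives $h_k\to h^\diamond\in\fix T$, and continuity of $\Xi$ yields $x_k\to\Xi(h^\diamond)\in\setS$ by \eqref{eq:fixed-point-encode}. I expect the main obstacle to be the bookkeeping in part (c): one must identify $\opP$, the forward operator $\opP^{-1}F$, and the backward operator $(\Id+\opP^{-1}G)^{-1}$ so that their composition matches \eqref{eq:def-T} exactly (the factors $\tfrac1\sigma$, $\tfrac\mu\tau$ and the "$2\opL\xi-\opL x$" patterns must come out right), and then push through the Schur-complement/averagedness estimate to see that \eqref{eq:cond-sigma-tau} is the precise threshold guaranteeing both $\opP\succ\zeroMatrix_{\spH}$ and $\theta<2$. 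A secondary subtlety is that $F$ is only cocoercive on the product space in the $\opP$-metric, not the standard one, so the cocoercivity constant from Lemma \ref{lemma:coercive-constant}(b) has to be transported through the change of inner product — this is where the $\max\{\lipconst{\nabla\mathfrak{d}},\mu\|B\|_{\mathrm{op}}^2\}$ in the definition \eqref{eq:def-rho} of $\rho$ enters.
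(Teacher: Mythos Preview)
Your proposal is correct and follows essentially the same approach as the paper: encode optimality via $0\in(F+G)$ using the decomposition \eqref{eq:J-sum-of-convex} and conjugacy relations, identify $T=(\Id+\opP^{-1}G)^{-1}\circ(\Id-\opP^{-1}F)$ as the forward--backward operator in the metric $\opP$, verify $\opP\succ\zeroMatrix_{\spH}$ by Schur complement, bound $\|\mathfrak{Q}^{-1}\|_{\mathrm{op}}$ (where $\mathfrak{Q}$ is the $(x,v)$-block of the Schur complement) to get $\frac1\theta$-cocoercivity of $\opP^{-1}F$ in the $\opP$-metric, and conclude averagedness and convergence. One small wording point: $F$ (more precisely its $(x,v)$-part) is $\rho$-cocoercive in the \emph{standard} metric on $\spX\times\spZ$ by Lemma~\ref{lemma:coercive-constant}(b); what you then establish is that $\opP^{-1}F$ is $\frac1\theta$-cocoercive in the $\opP$-metric, which is indeed the ``transport'' step you describe.
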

\begin{proof}
  See Appendix \ref{appendix:a6}.
\end{proof}

Algorithm \ref{alg:proposed} illustrates a concrete expression of the Krasnosel'ski\u{\i}-Mann iteration \eqref{eq:K-M} of the proposed nonexpansive operator $T$.

\begin{algorithm}[h]
  \caption{Proposed algorithm for Problem \ref{prob:cLiGME-w-general-fidelity}}
  \label{alg:proposed}
  \begin{algorithmic}[1]
    \State Choose $h_0\coloneqq\left(x_0, v_0, w_0, z_0\right)\in \spX\times\spZ\times\spZ\times\spfrakZ$.
    \State Choose $(\sigma,\tau)\in(0,\infty)\times(\frac{1}{2\rho},\infty)$ 
    satisfying \eqref{eq:cond-sigma-tau}.
    \For{$k=0,1,2,\cdots$ }
    \State $x_{k+1} \coloneqq\left(\Id - \frac{1}{\sigma} \nabla \mathfrak{d}\right) (x_k) - \frac{\mu}{\sigma}\opL^*B^*B v_k -\frac{\mu}{\sigma}\opL^*w_k - \frac{\mu}{\sigma}\opC^*z_k$
    \State $v_{k+1}\coloneqq \prox_{\frac{\mu}{\tau}\Psi}\left[
      \frac{2\mu}{\tau}B^*B\opL x_{k+1}
      -\frac{\mu}{\tau} B^*B\opL x_k + \left(\Id -\frac{\mu}{\tau}B^*B \right)(v_k)
    \right]$
    \State $w_{k+1}\coloneqq \prox_{\Psi^*}(2\opL x_{k+1} -\opL x_k +w_k)$
    \State $z_{k+1}\coloneqq(\Id - P_{\Cz})(2\opC x_{k+1} - \opC x_k +z_k)$
     \State $h_{k+1} \coloneqq (x_{k+1},v_{k+1},w_{k+1},z_{k+1} )$
    \EndFor
  \end{algorithmic}
\end{algorithm}
\begin{remark}[Choice of $(\sigma,\tau)$ satisfying \eqref{eq:cond-sigma-tau}]
  \label{remark:choice-sigma-tau}
  In numerical experiments in Section \ref{sec:numerical}, we used $\tau= \frac{3}{2\rho}$ and
  \begin{equation}
    \sigma \coloneqq 1.001 \times\left(\mu \norm{\opL^*\opL + \opC^*\opC}_{\mathrm{op}}+\frac{2\rho \mu^2\norm{B^*B\opL}^2_{\mathrm{op}} + \tau}{2\rho\tau - 1}\right).
  \end{equation}
\end{remark}
\begin{remark}[Comparison with existing algorithms]
    The proposed algorithm can be seen as an extension of \cite[Algorithm 1]{LiGME} and \cite[Algorithm 1]{cLiGME} which are proposed for the quadratic data fidelity case, i.e., $f\coloneqq \frac{1}{2}\norm{y-\cdot}^2_{\spY}$.
    Moreover, the proposed algorithm is applicable without the conditions $\dom\Psi = \spZ$ and $\Psi\circ(-\Id) = \Psi$, imposed in \cite[Algorithm 1]{LiGME} and \cite[Algorithm 1]{cLiGME}.
\end{remark}

\section{Quadratic extrapolation for broader applicability of cLiGME-SCF}
\label{sec:fidelity}
In this section, we introduce a reformulation technique of an observation loss function to enjoy favorable properties for the proposed model.
Before introducing a reformulation technique, we discuss favorable properties of the observation loss function $f\in\Gamma_0(\spY)$ for the proposed model. 
As mentioned in Remark~\ref{remark:oc-cond}(c), in a case where we design $B$ achieving $(C_0)$, $f$ is desired to be $1$-strongly convex relative to $q_\Lambda$ with $\Lambda\succneqq \zeroMatrix_{\spY}$ over $\spY$.
Moreover, the Lipschitz differentiability of $f$ over $\spY$ is required for convergence guarantee of the proposed algorithm (see Problem \ref{prob:cLiGME-w-general-fidelity} and Assumption~\ref{assumption:alg}).
Therefore, favorable properties of $f$ for the proposed model are summarized as follows:
\begin{equation}
  \label{eq:favorable-properties}
  \begin{cases}
    \mbox{$f$ is strongly convex relative to $q_{\Lambda}$ with $\Lambda\succneqq\zeroMatrix_{\spY}$ over $\spY$},\\
    \mbox{$f$ is differentiable over $\spY$ and $\nabla f$ is Lipschitz continuous over $\spY$}.
  \end{cases}
\end{equation}

To broaden applicability of the proposed model \eqref{eq:cLiGME-w-general-fidelity} and Algorithm \ref{alg:proposed}, in this section, we consider a case where an original observation loss function \footnote{
    We use symbol $\fBeforeExtension$ for the original observation loss function instead of $f$ because $\fBeforeExtension$ does not enjoy the properties in \eqref{eq:favorable-properties}.
  }
  $\fBeforeExtension\in\Gamma_0(\spY)$ satisfies favorable properties in \eqref{eq:favorable-properties} with $f\coloneqq \fBeforeExtension$ over $A(\opC^{-1}(\Cz))$ but not necessarily over $\spY$.
By assuming a separable structure of $\fBeforeExtension$, we introduce a reformulation of an optimization problem with $\fBeforeExtension$ (see Problem \ref{prob:relaxed-fidelity} below) into Problem \ref{prob:cLiGME-w-general-fidelity} with $f\coloneqq \widetilde{\fBeforeExtension}$ enjoying \eqref{eq:favorable-properties}.
See Section \ref{sec:numerical} for examples of Problem \ref{prob:relaxed-fidelity} below.

\begin{problem}
  \label{prob:relaxed-fidelity}
  In Problem \ref{prob:cLiGME-w-general-fidelity} with $\spY\coloneqq \setR^m$, define closed convex sets
  $\Pi_i \subset \setR \ (i\in\{1,2\dots, m\})$ by
  \begin{equation}
    \Pi_i \coloneqq \left\{t\in\setR \mid \inf_{\opC x\in\Cz} [Ax]_i \leq t \leq \sup_{\opC x\in\Cz} [Ax]_i\right\}\subset \setR,
  \end{equation}
  and set $\Pi\coloneqq \bigtimes_{i=1}^m\Pi_i(\subset \spY)$.
  (Note: From the definition of $\Pi$, the inclusion $A^{-1}(\Pi)\supset\opC^{-1}(\Cz) $ holds.)
  Assume that the observation loss function $\fBeforeExtension$ enjoys the following conditions:
  \begin{enumerate}[(i)]
    \item $\fBeforeExtension\in\Gamma_0(\spY)$ is the separable sum 
    $\fBeforeExtension(u) \coloneqq \sum_{i=1}^m \fBeforeExtension_i([u]_i)
    $ of univariate functions $\fBeforeExtension_i$, where each $\fBeforeExtension_i\in\Gamma_0(\setR) \ (i\in\{1,2,\ldots,m\})$ is twice continuously differentiable over $\Pi_i$ and
   \begin{equation}
    \label{eq:assumption-hessian-sup}
    \sup_{t\in\Pi_i} \fBeforeExtension_i''(t)<\infty;
   \end{equation}
    \item a diagonal matrix $\Lambda \in\setR^{m\times m}$ with 
    \begin{equation}
      \label{eq:def-lambda}
      [\Lambda]_{i,i}\coloneqq
      \inf_{t\in\Pi_i} \fBeforeExtension_i''(t)\geq 0
    \end{equation}
    satisfies $\Lambda\succneqq \zeroMatrix_{\spY}$, i.e.,
    $\fBeforeExtension$ is 1-strongly convex relative to $q_{\Lambda}$ over $\Pi$.
  \end{enumerate}
  Then consider the optimization model 
  \begin{equation}
    \label{eq:before-reformulated-model}
     \minimize_{\opC x\in\Cz} \fBeforeExtension\circ A(x) + \mu \Psi_B\circ\opL (x).
  \end{equation}
\end{problem}

The observation loss function $\fBeforeExtension$ in Problem \ref{prob:relaxed-fidelity} does not necessarily enjoy properties in \eqref{eq:favorable-properties} with $f\coloneqq \fBeforeExtension$.
However, by constructing a smooth convex extension $\widetilde{\fBeforeExtension}(\cdot;r)$ of $\fBeforeExtension |_{\opC^{-1}(\Cz)}$ as in Proposition \ref{prop:separable-extension} below, we can reformulate Problem \ref{prob:relaxed-fidelity} into an instance of Problem~\ref{prob:cLiGME-w-general-fidelity} enjoying \eqref{eq:favorable-properties} with $f\coloneqq \widetilde{\fBeforeExtension}(\cdot;r)$, where $r$ is a univariate convex function. A constant function $(\forall t\in\setR) \ r(t)=0$ enjoys conditions for $r$ in Proposition~\ref{prop:separable-extension} below, and we mainly use this choice for reformulation of the model (see Remark \ref{remark:choice-r} below).
However, in Corollary~\ref{corollary:existence-minimizer-extension}(d), $\widetilde{\fBeforeExtension}(\cdot;r)$ with special nonconstant $r$ helps us to derive sufficient conditions for the existence of a minimizer. For this reason, we present Proposition \ref{prop:separable-extension} with possibly nonconstant function $r$.

\begin{proposition}[Construction of smooth convex extension of $\fBeforeExtension|_{\opC^{-1}(\Cz)}$]
  \label{prop:separable-extension}
In Problem \ref{prob:relaxed-fidelity}, let $r:\setR\to\setR$ be a twice continuously differentiable convex function such that
  \begin{equation}
    \label{eq:cond-r}
      \inf_{t\in\setR} r''(t) \geq 0,\quad \sup_{t\in\setR} r''(t) < \infty,\quad
      r(0) = r'(0)=r''(0) = 0.
  \end{equation}
  For every $i\in \{1,2,\ldots, m\}$, define functions $(1\leq i \leq m)\ \widetilde{\fBeforeExtension}_i(\cdot;r):\setR\to(-\infty,\infty)$ by
    \begin{equation}
      \label{eq:def-ftilde-i-r}
      (\forall t\in\setR) \ \
      \widetilde{\fBeforeExtension}_i(t;r) \coloneqq \begin{cases}
        \frac{\fBeforeExtension_i''(l_i)}{2} (t -l_i)^2 + \fBeforeExtension_i'(l_i) (t-l_i) + \fBeforeExtension_i(l_i) +r(-t+l_i), & t\leq l_i\\
        \fBeforeExtension_i(t), & l_i <t < h_i\\
        \frac{\fBeforeExtension_i''(h_i)}{2} (t -h_i)^2 + \fBeforeExtension_i'(h_i) (t-h_i) + \fBeforeExtension_i(h_i) +r(t-h_i), & t\geq h_i,
      \end{cases}
    \end{equation}
    where $l_i\coloneqq \inf (\Pi_i)\in[-\infty,\infty)$ and $h_i\coloneqq \sup (\Pi_i)\in(-\infty,\infty]$.
    Then, for 
    \begin{equation}
      \label{eq:def-ftilde-r}
      \widetilde{\fBeforeExtension}(\cdot; r):\spY\to\setR: u\mapsto \sum_{i=1}^m \widetilde{\fBeforeExtension}_i([u]_i;r),
    \end{equation}
    the following hold.
    \begin{enumerate}[\upshape (a)]
      \item $\widetilde{\fBeforeExtension}(\cdot; r)$ is twice continuously differentiable over $\spY$ and $\nabla \widetilde{\fBeforeExtension}(\cdot; r)$ is Lipschitz continuous over $\spY$ with a constant
      \begin{equation}
        \label{eq:lip-const-with-r}
        \lipconst{\nabla \widetilde{\fBeforeExtension}(\cdot;r)} \coloneqq \max\left\{
          \sup_{t\in \Pi_1} \fBeforeExtension_1''(t),\sup_{t\in \Pi_2} \fBeforeExtension_2''(t), \ldots , \sup_{t\in \Pi_m} \fBeforeExtension_m''(t)
        \right\} + \sup_{t\in\setR} r''(t)<\infty;
      \end{equation}
      \item $\widetilde{\fBeforeExtension}(\cdot;r)$ is $1$-strongly convex relative to $q_{\Lambda}$ over $\spY$ with $\Lambda\succneqq\zeroMatrix_{\spY}$ defined in \eqref{eq:def-lambda};
      \item $\fBeforeExtension \circ A(x) = \widetilde{\fBeforeExtension}(A x;r)$ holds for every $ x\in\opC^{-1}(\Cz)$;
      \item Under the condition\footnote{
      \label{footnote:existence-r}
      E.g.,
      set 
   $
    r(t) = \begin{cases}
      0, & (t\leq 0)\\
      \frac{1}{6}t^3, &(0<t<1)\\
      \frac{1}{2}t^2 - \frac{1}{2}t +\frac{1}{6}, & (1\leq t)
    \end{cases}
  $ whose Hessian is 
  $
    r''(t) = \begin{cases}
      0, & (t\leq 0)\\
      t, &(0<t<1)\\
      1, & (1\leq t).
    \end{cases}
  $
    } $\lim_{t\to +\infty}r(t)/t = \infty$, we have \begin{align}
      &\mbox{$\widetilde{\fBeforeExtension}(\cdot;r)$ is coercive if $\fBeforeExtension$ is coercive, }\\
      &\mbox{$\widetilde{\fBeforeExtension}(\cdot;r)$ is bounded below if $\fBeforeExtension$ is bounded below.}
    \end{align}
    \end{enumerate}
    \end{proposition}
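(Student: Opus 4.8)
The plan is to reduce everything to the one-dimensional building blocks $\widetilde{\fBeforeExtension}_i(\cdot;r)$ and then assemble the separable sum, since smoothness, the Lipschitz constant of the gradient, relative strong convexity with a diagonal reference matrix, and coercivity/boundedness all behave coordinatewise for separable sums. For part (a), I would first check that each $\widetilde{\fBeforeExtension}_i(\cdot;r)$ is $C^2$ on $\setR$, the only issue being the junctions $t=l_i$ and $t=h_i$ (when finite). At $t=h_i$, say, the left piece is $\fBeforeExtension_i$ itself, so its value, first derivative, and second derivative at $h_i$ are $\fBeforeExtension_i(h_i),\fBeforeExtension_i'(h_i),\fBeforeExtension_i''(h_i)$; the right piece is the quadratic plus $r(t-h_i)$, whose value, derivative, and second derivative at $h_i$ are $\fBeforeExtension_i(h_i)$, $\fBeforeExtension_i'(h_i)+r'(0)$, and $\fBeforeExtension_i''(h_i)+r''(0)$ respectively — these match because $r(0)=r'(0)=r''(0)=0$ by \eqref{eq:cond-r}. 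The analogous computation works at $l_i$ using $r(-t+l_i)$ (the inner argument is again $0$ at $t=l_i$, with a sign flip that is absorbed because $r'(0)=0$). Once $C^2$ is established, $\widetilde{\fBeforeExtension}_i''(t)$ is: $\fBeforeExtension_i''(l_i)+r''(-t+l_i)$ on $(-\infty,l_i]$, $\fBeforeExtension_i''(t)$ on $(l_i,h_i)$, and $\fBeforeExtension_i''(h_i)+r''(t-h_i)$ on $[h_i,\infty)$. Each of these is bounded above by $\sup_{t\in\Pi_i}\fBeforeExtension_i''(t) + \sup_{t\in\setR} r''(t)$, using \eqref{eq:assumption-hessian-sup} and the boundedness of $r''$, and also bounded below by $0$; hence $0\le \widetilde{\fBeforeExtension}_i''(t)\le \lipconst{\nabla\widetilde{\fBeforeExtension}(\cdot;r)}$ pointwise, which gives Lipschitz continuity of $\widetilde{\fBeforeExtension}_i'$ with that constant. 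Taking the separable sum, $\nabla\widetilde{\fBeforeExtension}(\cdot;r)$ is Lipschitz with constant equal to the max over $i$ of the per-coordinate constants, which is exactly \eqref{eq:lip-const-with-r}.

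For part (b), I want $\widetilde{\fBeforeExtension}(\cdot;r) - q_\Lambda$ convex over $\spY$, i.e.\ by separability, each $\widetilde{\fBeforeExtension}_i(t;r) - \tfrac{1}{2}[\Lambda]_{i,i} t^2$ convex in $t$. Since these functions are $C^2$, this is equivalent to $\widetilde{\fBeforeExtension}_i''(t) \ge [\Lambda]_{i,i}$ for all $t$. On $(l_i,h_i)$ this is $\fBeforeExtension_i''(t)\ge \inf_{s\in\Pi_i}\fBeforeExtension_i''(s) = [\Lambda]_{i,i}$ by \eqref{eq:def-lambda}. On the two tails, $\widetilde{\fBeforeExtension}_i''$ equals $\fBeforeExtension_i''(l_i)+r''(\cdot)$ or $\fBeforeExtension_i''(h_i)+r''(\cdot)$, each of which is $\ge \fBeforeExtension_i''(l_i)$ or $\ge\fBeforeExtension_i''(h_i)$ (since $r''\ge 0$), hence $\ge [\Lambda]_{i,i}$. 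Then invoke Proposition \ref{prop:properties-strong-convexity}(a) or simply the definition: $\widetilde{\fBeforeExtension}(\cdot;r)$ is $1$-strongly convex relative to $q_\Lambda$, and $\Lambda\succneqq\zeroMatrix_{\spY}$ is the hypothesis (ii) of Problem \ref{prob:relaxed-fidelity}. For part (c), if $x\in\opC^{-1}(\Cz)$ then for each $i$, $[Ax]_i \in \Pi_i$ by the very definition of $\Pi_i$, hence $[Ax]_i\in(l_i,h_i)$ or possibly equals a finite endpoint; in either case the middle branch (extended by continuity to finite endpoints, which agrees with the quadratic branch there since $r$ vanishes to second order) gives $\widetilde{\fBeforeExtension}_i([Ax]_i;r)=\fBeforeExtension_i([Ax]_i)$. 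Summing over $i$ yields $\widetilde{\fBeforeExtension}(Ax;r)=\fBeforeExtension\circ A(x)$.

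Part (d) is the step I expect to require the most care. Assume $\lim_{t\to+\infty} r(t)/t = \infty$. I would argue coordinatewise and then pass to the sum. Fix $i$. On the upper tail $t\ge h_i$, $\widetilde{\fBeforeExtension}_i(t;r) = \tfrac{1}{2}\fBeforeExtension_i''(h_i)(t-h_i)^2 + \fBeforeExtension_i'(h_i)(t-h_i)+\fBeforeExtension_i(h_i) + r(t-h_i)$; the quadratic-plus-linear part is bounded below by an affine function of $t$ (its minimum, if it curves down it can't since $\fBeforeExtension_i''(h_i)\ge 0$), so $\widetilde{\fBeforeExtension}_i(t;r) \ge c_i + d_i t + r(t-h_i)$ for constants $c_i,d_i$, and since $r(t-h_i)/t\to\infty$, the whole expression $\to+\infty$ faster than linearly; symmetrically on the lower tail using $r(-t+l_i)$. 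So each $\widetilde{\fBeforeExtension}_i(\cdot;r)$ is bounded below on $\setR$ (it is bounded below on each tail by the above and continuous, hence bounded below, on the compact-up-to-endpoints middle piece — more precisely, on $[l_i,h_i]\cap\setR$ it coincides with $\fBeforeExtension_i$ which is proper lsc convex on a possibly unbounded interval, but combined with the superlinear tails the infimum over $\setR$ is attained or at least finite). For the bounded-below claim: if $\fBeforeExtension$ is bounded below, then each $\fBeforeExtension_i$ is bounded below (a separable sum of lsc convex functions is bounded below iff each summand is, up to an additive constant distribution — here cleanest to note $\inf\fBeforeExtension = \sum_i \inf\fBeforeExtension_i$), and I claim $\inf\widetilde{\fBeforeExtension}_i(\cdot;r)\ge \inf\fBeforeExtension_i$ as well: on the middle piece they agree; on the tails, $\widetilde{\fBeforeExtension}_i(t;r)\ge \widetilde{\fBeforeExtension}_i(h_i;r)=\fBeforeExtension_i(h_i)\ge\inf\fBeforeExtension_i$ because the tail function has nonnegative derivative at and beyond $h_i$ (its derivative is $\fBeforeExtension_i''(h_i)(t-h_i)+\fBeforeExtension_i'(h_i)+r'(t-h_i)$; at $t=h_i$ this equals $\fBeforeExtension_i'(h_i)$ which, since $\fBeforeExtension_i$ is convex and bounded below, need not be $\ge 0$ — so I'd instead use that $\fBeforeExtension_i$ restricted to $[l_i,\infty)$ or wherever it's finite is bounded below and the quadratic+$r$ continuation dominates the convex tangent line of $\fBeforeExtension_i$ at $h_i$, which lies below $\fBeforeExtension_i$, and $\fBeforeExtension_i\ge\inf\fBeforeExtension_i$). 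Summing over $i$, $\widetilde{\fBeforeExtension}(\cdot;r)$ is bounded below. For coercivity: if $\fBeforeExtension$ is coercive, I want $\widetilde{\fBeforeExtension}(\cdot;r)(u)\to\infty$ as $\norm{u}\to\infty$; I would use that coercivity of a separable convex sum is characterized via recession functions (each $\rec(\fBeforeExtension_i)(\pm 1)>0$ — or more precisely the joint condition \eqref{eq:def2-coercive} for the sum), split $\setR^m$ into the region where all $[u]_i$ stay in $\Pi_i$ (bounded, since each $\Pi_i$ is — wait, $\Pi_i$ may be unbounded, so this needs the recession-direction argument instead) and the region where some coordinate escapes its tail (where the superlinear $r$-term forces blow-up), and conclude. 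The main obstacle throughout part (d) is bookkeeping the cases $l_i=-\infty$ and/or $h_i=+\infty$ (where a tail piece is vacuous and the function is literally $\fBeforeExtension_i$ on that side) versus finite endpoints, and matching the recession-function characterization of coercivity for the separable sum against the behavior of $\fBeforeExtension_i$ itself on its (possibly unbounded) effective domain — I'd handle it by noting $\widetilde{\fBeforeExtension}_i \ge \fBeforeExtension_i + (\text{nonneg. terms supported on the tails})$ minus affine corrections, so $\rec(\widetilde{\fBeforeExtension}_i)\ge\rec(\fBeforeExtension_i)$ in the finite-endpoint directions and is $+\infty$ in the escaping directions by superlinearity of $r$, which suffices.
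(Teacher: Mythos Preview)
Your arguments for (a), (b), and (c) are essentially the same as the paper's: compute the piecewise second derivative, check matching at the junctions via $r(0)=r'(0)=r''(0)=0$, bound $\widetilde{\fBeforeExtension}_i''$ above by $\sup_{\Pi_i}\fBeforeExtension_i''+\sup r''$ and below by $[\Lambda]_{i,i}$, and for (c) use $[Ax]_i\in\Pi_i$. No issues there.

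Part (d) is where you diverge from the paper and where your proposal has a gap. The paper does not try to compare $\inf\widetilde{\fBeforeExtension}_i$ to $\inf\fBeforeExtension_i$ or invoke recession functions at all; it simply computes $\lim_{t\to\pm\infty}\widetilde{\fBeforeExtension}_i(t;r)$ directly. If $h_i<\infty$, the tail expression is $\tfrac{\fBeforeExtension_i''(h_i)}{2}(t-h_i)^2+\fBeforeExtension_i'(h_i)(t-h_i)+\fBeforeExtension_i(h_i)+r(t-h_i)$, and since $\fBeforeExtension_i''(h_i)\ge 0$ and $r(t-h_i)/t\to\infty$, the limit is $+\infty$; if $h_i=\infty$, the limit equals $\lim_{t\to+\infty}\fBeforeExtension_i(t)$ because $\widetilde{\fBeforeExtension}_i=\fBeforeExtension_i$ there. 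The analogous dichotomy holds at $-\infty$. From this, coercivity and boundedness-below of $\widetilde{\fBeforeExtension}_i$ follow immediately from those of $\fBeforeExtension_i$ (which in turn follow coordinatewise from $\fBeforeExtension$), and the separable sum inherits both properties. Your bounded-below argument, by contrast, is incomplete as written: you correctly observe that $\fBeforeExtension_i'(h_i)$ need not be $\ge 0$, but your fallback (``dominates the tangent line, which lies below $\fBeforeExtension_i$, and $\fBeforeExtension_i\ge\inf\fBeforeExtension_i$'') does not establish boundedness on the tail, since the tangent line itself can go to $-\infty$ there and $\fBeforeExtension_i$ need not even be defined past $h_i$. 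You already have the right ingredient earlier (superlinear $r$ kills any linear deficit), so the clean fix is exactly the paper's route: use that ingredient to get $\lim_{t\to+\infty}\widetilde{\fBeforeExtension}_i(t;r)=+\infty$ on finite-endpoint tails, then conclude boundedness below from convexity and the one-sided limits, rather than chasing an inequality between the two infima.
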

\begin{proof}
  See Appendix \ref{appendix:proof-separable-extension}.
\end{proof}

\begin{remark}[Recommended choice of $r$]
  \label{remark:choice-r}
  In practice, we recommend employing $(\forall t \in\setR) \ r(t)= 0$ for constructing a smooth convex extension $\widetilde{\fBeforeExtension}(\cdot;r)$ in \eqref{eq:def-ftilde-i-r} and \eqref{eq:def-ftilde-r} because $\lipconst{\nabla \widetilde{\fBeforeExtension}(\cdot;r)}$ in \eqref{eq:lip-const-with-r} achieves the smallest value with this choice.
\end{remark}

With the convex extension $\widetilde{\fBeforeExtension}(\cdot;r)$ of $\fBeforeExtension |_{\opC^{-1} (\Cz)}$ in Proposition \ref{prop:separable-extension},
we can reformulate Problem \ref{prob:relaxed-fidelity} into Problem \ref{prob:cLiGME-w-general-fidelity} satisfying \eqref{eq:favorable-properties} as follows.

\begin{corollary}[Reformulation of Problem \ref{prob:relaxed-fidelity} into Problem \ref{prob:cLiGME-w-general-fidelity}]
  \label{corollary:existence-minimizer-extension}
  For Problem \ref{prob:relaxed-fidelity}, define $\widetilde{\fBeforeExtension}\coloneqq \widetilde{\fBeforeExtension}(\cdot;r)$ as in Proposition \ref{prop:separable-extension} with $(\forall t \in\setR) \ r(t)=0$, and consider the following optimization model:
  \begin{equation}
    \label{eq:reformulated-model}
    \minimize_{\opC x\in\Cz} \widetilde{\fBeforeExtension}\circ A(x) + \mu \Psi_B\circ\opL (x).
  \end{equation}
  Then the following hold.
  \begin{enumerate}[\upshape (a)]
    \item $\widetilde{\fBeforeExtension}$ is twice continuously differentiable over $\spY$ and $\nabla \widetilde{\fBeforeExtension}$ is $\lipconst{\nabla \widetilde{\fBeforeExtension}}$-Lipschitz continuous over $\spY$ with 
    $
        \lipconst{\nabla \widetilde{\fBeforeExtension}}=\max\left\{
          \sup_{t\in \Pi_1} \fBeforeExtension_1''(t),\sup_{t\in \Pi_2} \fBeforeExtension_2''(t), \ldots , \sup_{t\in \Pi_m} \fBeforeExtension_m''(t)
        \right\} <\infty.
    $
    \item The solution set of \eqref{eq:reformulated-model} coincides with that of \eqref{eq:before-reformulated-model}, and the model \eqref{eq:reformulated-model} is an instance of Problem~\ref{prob:cLiGME-w-general-fidelity}.
    \item If $A^*\Lambda A - \mu\opL^*B^*B\opL\succeq \zeroMatrix_{\spX}$ holds with $\Lambda$ defined in \eqref{eq:def-lambda}, then $\widetilde{\fBeforeExtension}\circ A$ is $\mu$-strongly convex relative to $q_{\opL^*B^*B\opL}$ and $\widetilde{\fBeforeExtension}\circ A + \mu \Psi_B\circ\opL\in\Gamma_0(\spX)$ holds.
    \item Assume that $A^*\Lambda A - \mu\opL^*B^*B\opL\succeq \zeroMatrix_{\spX}$ holds. Then both models~\eqref{eq:before-reformulated-model} and \eqref{eq:reformulated-model} have a minimizer if one of the following holds:
    \begin{enumerate}[\upshape (i)]
      \item $\fBeforeExtension$ is coercive and $\opC^{-1}(\Cz)=\{x\in \spX \mid \opC x\in\Cz\}$ is asymptotically multipolyhedral;
      \item $\fBeforeExtension$ is coercive and $\ \nullsp A\cap \nullsp \opL= \{0_{\spX}\}$;
      \item $\fBeforeExtension$ is bounded below and $\ \nullsp \opL = \{0_{\spX}\}$;
      \item $\opC^{-1}(\Cz)$ is bounded.
    \end{enumerate} 
    \item Assume that (i) $A^*\Lambda A - \mu\opL^*B^*B\opL\succeq \zeroMatrix_{\spX}$ holds, (ii) there exists a minimizer of \eqref{eq:reformulated-model}, and (iii) the sum and chain rules in \eqref{eq:sum-rule-J-and-constraint}, \eqref{eq:chain-rule-of-regularizer} and \eqref{eq:chain-rule-of-indicator} hold. 
    Then the sequence $(x_k)_{k\in\setN_0}$ generated by applying Algorithm \ref{alg:proposed} to the model \eqref{eq:reformulated-model} converges to a global minimizer of the original model \eqref{eq:before-reformulated-model}.
  \end{enumerate}
\end{corollary}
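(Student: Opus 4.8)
The plan is to obtain each of (a)--(e) by matching the hypotheses to results already established, with $\widetilde{\fBeforeExtension}$ playing the role of the observation loss $f$ in Problem~\ref{prob:cLiGME-w-general-fidelity}.

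First, for (a) I would specialize Proposition~\ref{prop:separable-extension}(a) to $r\equiv 0$, noting $\sup_{t\in\setR} r''(t)=0$, so the Lipschitz constant \eqref{eq:lip-const-with-r} collapses to the stated $\max_i\sup_{t\in\Pi_i}\fBeforeExtension_i''(t)$. For (b), Proposition~\ref{prop:separable-extension}(c) gives $\fBeforeExtension\circ A(x)=\widetilde{\fBeforeExtension}(Ax)$ for every $x\in\opC^{-1}(\Cz)$; since the regularizer term $\mu\Psi_B\circ\opL$ is identical in \eqref{eq:before-reformulated-model} and \eqref{eq:reformulated-model}, the two cost functions agree on the feasible set, hence have the same minimizers. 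That \eqref{eq:reformulated-model} is an instance of Problem~\ref{prob:cLiGME-w-general-fidelity} then follows: $\widetilde{\fBeforeExtension}$ is finite-valued and continuous, hence lies in $\Gamma_0(\spY)$ once its convexity is granted by Proposition~\ref{prop:separable-extension}(b) (which holds since $\Lambda\succeq\zeroMatrix_{\spY}$), it is differentiable over $\spY$ with Lipschitz gradient by (a), and all remaining data together with the feasibility condition \eqref{eq:assumption-proper} are inherited from Problem~\ref{prob:relaxed-fidelity}. For (c), Proposition~\ref{prop:separable-extension}(b) makes $\widetilde{\fBeforeExtension}$ $1$-strongly convex relative to $q_\Lambda$, so the hypothesis $A^*\Lambda A-\mu\opL^*B^*B\opL\succeq\zeroMatrix_{\spX}$ is exactly condition $(C_0)$ of Proposition~\ref{prop:overall-convexity}(c) with $f\coloneqq\widetilde{\fBeforeExtension}$; the chain $(C_0)\Rightarrow(C_1)\Rightarrow(C_2)$ yields both conclusions.

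The delicate point is (d). A direct application of Theorem~\ref{thm:existence-solution} to \eqref{eq:reformulated-model} is not possible under hypotheses (i)--(iii), because that theorem asks for coercivity or lower boundedness of the observation loss, and (as emphasized in Remark~\ref{remark:choice-r}) the choice $r\equiv 0$ does not carry these properties of $\fBeforeExtension$ over to $\widetilde{\fBeforeExtension}$. The remedy is to introduce an auxiliary extension $\widetilde{\fBeforeExtension}(\cdot;r_0)$ built from a superlinearly growing $r_0$ (one satisfying \eqref{eq:cond-r} together with $\lim_{t\to+\infty}r_0(t)/t=\infty$, e.g.\ the explicit function displayed in the footnote of Proposition~\ref{prop:separable-extension}(d)). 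By Proposition~\ref{prop:separable-extension}(d), $\widetilde{\fBeforeExtension}(\cdot;r_0)$ inherits coercivity, resp.\ lower boundedness, from $\fBeforeExtension$; by Proposition~\ref{prop:separable-extension}(b) it is again $1$-strongly convex relative to $q_\Lambda$, so under $A^*\Lambda A-\mu\opL^*B^*B\opL\succeq\zeroMatrix_{\spX}$ the overall convexity condition $(C_1)$ holds for the model with loss $\widetilde{\fBeforeExtension}(\cdot;r_0)$, which is itself an instance of Problem~\ref{prob:cLiGME-w-general-fidelity} by the argument used in (b). Theorem~\ref{thm:existence-solution}(i)/(ii)/(iii) then delivers a minimizer of that model under the corresponding hypotheses; under hypothesis (iv), instead, Theorem~\ref{thm:existence-solution}(iv) applies directly to \eqref{eq:reformulated-model}, using $(C_1)$ from part~(c) and boundedness of $\opC^{-1}(\Cz)$, with no coercivity needed. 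Finally, Proposition~\ref{prop:separable-extension}(c) applied with both $r_0$ and $r\equiv 0$ shows that $\widetilde{\fBeforeExtension}(\cdot;r_0)\circ A$, $\widetilde{\fBeforeExtension}\circ A$ and $\fBeforeExtension\circ A$ all coincide on $\opC^{-1}(\Cz)$, so the three constrained models share the same solution set; consequently both \eqref{eq:before-reformulated-model} and \eqref{eq:reformulated-model} have a minimizer.

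For (e) I would verify Assumption~\ref{assumption:alg} for the model \eqref{eq:reformulated-model} with $f\coloneqq\widetilde{\fBeforeExtension}$: its item (i) is part~(a); item (ii) is part~(c) under the present hypothesis $A^*\Lambda A-\mu\opL^*B^*B\opL\succeq\zeroMatrix_{\spX}$; item (iii) is the assumed nonemptiness of the solution set; item (iv) is the assumed sum and chain rules. Theorem~\ref{thm:convergence-analysis}(d) then shows that the sequence $(x_k)_{k\in\setN_0}$ produced by Algorithm~\ref{alg:proposed} converges to a point of the solution set of \eqref{eq:reformulated-model}, which by part~(b) equals the solution set of \eqref{eq:before-reformulated-model}; hence the limit is a global minimizer of \eqref{eq:before-reformulated-model}. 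I expect the only genuine obstacle to be the coercivity gap in (d) — bridging the $r\equiv 0$ extension (which is optimal for the Lipschitz constant and hence the one actually run in Algorithm~\ref{alg:proposed}) to a superlinear auxiliary extension via the exactness of both on $\opC^{-1}(\Cz)$ — whereas (a)--(c) and (e) are careful but routine instantiations of Propositions~\ref{prop:overall-convexity}, \ref{prop:separable-extension} and Theorems~\ref{thm:existence-solution}, \ref{thm:convergence-analysis}.
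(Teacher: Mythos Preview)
Your proposal is correct and follows essentially the same route as the paper: parts (a)--(c) and (e) are direct instantiations of Proposition~\ref{prop:separable-extension}, Proposition~\ref{prop:overall-convexity}(c) and Theorem~\ref{thm:convergence-analysis}, and for (d) you correctly identify that the $r\equiv 0$ extension need not inherit coercivity from $\fBeforeExtension$, so you pass through an auxiliary extension $\widetilde{\fBeforeExtension}(\cdot;r_0)$ with superlinear $r_0$, invoke Proposition~\ref{prop:separable-extension}(d) and Theorem~\ref{thm:existence-solution}, and then transfer the minimizer back via coincidence on $\opC^{-1}(\Cz)$. The only minor difference is that the paper routes case (iv) through the auxiliary model as well (Theorem~\ref{thm:existence-solution}(iv) needs no coercivity, so either path works).
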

\begin{proof}
  See Appendix \ref{appendix:proof-reformulation}.
\end{proof}

\begin{remark}[Key for the existence of a minimizer of \eqref{eq:reformulated-model} is the coercivity or the lower boundedness of $\fBeforeExtension$]
  The reformulated observation loss function
  $\widetilde{\fBeforeExtension}$ in Corollary \ref{corollary:existence-minimizer-extension} is not necessarily coercive even if the original loss function $\fBeforeExtension$ is coercive. Likewise, $\widetilde{\fBeforeExtension}$ is not necessarily bounded below if $\fBeforeExtension$ is bounded below. This situation seems to imply that the coercivity or lower boundedness of $\fBeforeExtension$ does not guarantee the conditions (i)-(iii) in Theorem \ref{thm:existence-solution} for the existence of a minimizer of the model \eqref{eq:reformulated-model}. However, Corollary \ref{corollary:existence-minimizer-extension}(d) shows that the existence of a minimizer of the model \eqref{eq:reformulated-model} can be guaranteed by checking the coercivity (or lower boundedness) of $\fBeforeExtension$ instead of $\widetilde{\fBeforeExtension}$.
\end{remark}

\section{Applications and numerical experiments of proposed model and algorithm}
\label{sec:numerical}
\subsection{Poisson denoising for piecewise constant signal estimation}
\label{sec:poisson-denosing}
\subsubsection{Conventional convex model for Poisson denoising}
\sloppy 
We consider an estimation problem from the observation corrupted by Poisson noise.
The task is to estimate the target 
piecewise constant 
signal 
$\xstar\in\Cz(\subset\setPR^n)$ ($\Cz$: compact and closed convex subset of $\setPR^n$)
from
  $y \in(\setN_0)^n$
corrupted by Poisson noise $\varepsilon$, i.e.,
each $y_i$ follows the Poisson distribution
\begin{equation}
  \label{eq:poisson-dist}
  P([y]_i = k | [\xstar]_i) = \frac{([\xstar]_i)^k e^{- [\xstar]_i}}{k !}
\end{equation}
with parameter $[\xstar]_i$. 
The negative log-likelihood function associated with \eqref{eq:poisson-dist} is 
\begin{equation}
  -\log\left( \prod_{i=1}^{n} \frac{([x]_i)^{[y]_i} e^{- [x]_i}}{ [y]_i!}\right)
  =  \sum_{i=1}^{n}\left( - [y]_i\log([x]_i) + [x]_i - \log([y]_i!) \right).
\end{equation}
Therefore, the observation loss function $\fBeforeExtension^{\lrangle{\mathrm{poisson}}}:\setR^m\to(-\infty,\infty]: u\mapsto \sum_{i=1}^m \fBeforeExtension^{\lrangle{\mathrm{poisson}}}_i([u]_i)$
with 
\begin{equation}
  \label{eq:poisson-fidelity}
  \left\{
  \begin{aligned}[1]
    &(\forall i \in \mathbb{S}_{>0}\coloneqq \{\widetilde{i}\in \{ 1,2, \ldots m\} \mid y_{\widetilde{i}}>0\})\quad
  \fBeforeExtension^{\lrangle{\mathrm{poisson}}}_i(t) \coloneqq \begin{cases}
    t - [y]_i \log (t), & t>0\\
    \infty, & t\leq 0
  \end{cases}\\
   &(\forall i \in \mathbb{S}_{=0}\coloneqq\{\widetilde{i}\in \{ 1,2, \ldots m\} \mid y_{\widetilde{i}}=0\}) \quad
   \fBeforeExtension^{\lrangle{\mathrm{poisson}}}_i(t) \coloneqq \begin{cases}
    t ,  & t\geq 0\\
    \infty, & t< 0
  \end{cases}
  \end{aligned}
  \right.
\end{equation}
has been utilized for Poisson denoising, e.g., in \cite{byrne1993,triet2007,zanella2009,caroline2009}.

From the piecewise constancy of $\xstar$, $D \xstar$ is sparse with the first-order difference matrix 
\begin{equation}
      D \coloneqq \begin{bmatrix}
        -1 & 1      &        & \\
        & \ddots & \ddots  & \\
        &        & -1      & 1
      \end{bmatrix}\in \setR^{(n-1)\times n}.
\end{equation}
To promote the sparsity of $D x$, the total variation (TV) regularizer \cite{rudin1992} $\norm{\cdot}_1\circ D$ has been utilized as a convex regularizer where $\norm{\cdot}_{1}\in\Gamma_0(\setR^{n-1})$ is the $\ell_1$ norm\footnote{$\norm{\cdot}_{1}$ is prox-friendly. See, e.g., \cite[Example 24.22]{CAaMOTiH}.}. 
With $\fBeforeExtension^{\lrangle{\mathrm{poisson}}}$ defined in \eqref{eq:poisson-fidelity} and the TV regularizer $\norm{\cdot}_1\circ D$, a convex model for estimation of $\xstar$ has been formulated as \cite{triet2007}
\begin{equation}
  \label{eq:convex-poisson}
  \minimize_{x\in\Cz} \fBeforeExtension^{\lrangle{\mathrm{poisson}}}(x) + \mu \norm{\cdot}_1\circ D(x).
\end{equation}

\subsubsection{Nonconvex enhancement of convex model for Poisson denoising and reformulation into Problem \ref{prob:cLiGME-w-general-fidelity}}
As a nonconvex enhancement of the convex model \eqref{eq:convex-poisson}, we propose 
\begin{equation}
  \label{eq:proposed-poisson}
  \minimize_{ x\in\Cz} \fBeforeExtension^{\lrangle{\mathrm{poisson}}}(x) + \mu (\norm{\cdot}_1)_B\circ D(x).
\end{equation}
For every $i\in\{1,2,\ldots,m\}$, ${\fBeforeExtension_i^{\lrangle{\mathrm{poisson}}}}$ is twice continuously differentiable over $\setPR$ with 
$
  (\forall t\in\setPR) \ {\fBeforeExtension^{\lrangle{\mathrm{poisson}}}_i}''(t) = \begin{cases}
    \frac{[y]_i}{t^2}, & i\in\mathbb{S}_{>0}\\
    0, & i\in\mathbb{S}_{=0}
  \end{cases}.
$
Since $\Cz$ is compact and satisfies $\Cz\subset\setPR^n$, the inequality
\begin{equation}
  (\forall i\in\{1,2,\ldots,m\})\quad 
  \sup_{t \in\Pi_i} {\fBeforeExtension^{\lrangle{\mathrm{poisson}}}_i}''(t) =\begin{cases}
    \frac{[y]_i}{l_i^2}, & i\in\mathbb{S}_{>0}\\
    0, & i\in\mathbb{S}_{=0} 
  \end{cases}
\end{equation}
holds with $l_i \coloneqq \inf(\Pi_i)(>0)$. Moreover, $\fBeforeExtension^{\lrangle{\mathrm{poisson}}}$ is $1$-strongly convex relative to $q_{\Lambda}$ over $\Pi$, where $\Pi_i$ and $\Pi$ are defined as in Problem \ref{prob:relaxed-fidelity} and a diagonal matrix $\Lambda$ in \eqref{eq:def-lambda} is given with $h_i \coloneqq \sup(\Pi_i)(<\infty)$ by
\begin{equation}
  \label{eq:lambda-poisson}
  [\Lambda]_{i,i}
  \coloneqq 
  \begin{cases}
    \frac{[y]_i}{h_i^2}, & i\in\mathbb{S}_{>0}\\
    0, & i\in\mathbb{S}_{=0}.
  \end{cases}
\end{equation}
Hence, the model \eqref{eq:proposed-poisson} is an instance of Problem \ref{prob:relaxed-fidelity}.
By Corollary \ref{corollary:existence-minimizer-extension},
the model \eqref{eq:proposed-poisson} can be reformulated into 
\begin{equation}
  \label{eq:proposed-poisson-reformulated}
  \minimize_{ x\in\Cz} f^{\lrangle{\mathrm{poisson}}}(x) + \mu (\norm{\cdot}_1)_B\circ D(x)
\end{equation}
which is an instance of Problem \ref{prob:cLiGME-w-general-fidelity}.
With a GME matrix $B$ satisfying
\begin{equation}
  \label{eq:convexity-cond-poisson}
  \Lambda - \mu D^* B^*B D\succeq \zeroMatrix_{\spX},
\end{equation}
the cost function of \eqref{eq:proposed-poisson-reformulated} becomes convex by Corollary \ref{corollary:existence-minimizer-extension}(c). Moreover, the existence of a minimizer of the model \eqref{eq:proposed-poisson-reformulated} is verified by Corollary \ref{corollary:existence-minimizer-extension}(d) with the compactness of $\Cz$.

\subsubsection{Numerical experiments of Poisson denoising}
We conducted numerical experiments on estimation of $\xstar\in\Cz \coloneqq[5,40]^n  \ (n=150)$ from its noisy observation $y$ in \eqref{eq:poisson-dist}, where $\xstar$ is a piecewise constant signal shown in Figure \ref{fig:reconst-examples-poisson}(a) below.
We compared the conventional model \eqref{eq:convex-poisson} and the proposed model \eqref{eq:proposed-poisson-reformulated}.
For the proposed model \eqref{eq:proposed-poisson-reformulated}, we designed $B$ by \cite[Theorem 1]{Chen2023} with 
$(A,\opL,\mu,\theta) = (\Lambda^{\frac{1}{2}}, D, \mu, 0.99)$ to enjoy \eqref{eq:convexity-cond-poisson}. 
For minimizations of \eqref{eq:convex-poisson} and \eqref{eq:proposed-poisson-reformulated}, we applied \footnote{By setting $B=\zeroMatrix$, the model \eqref{eq:proposed-poisson} reproduces the model \eqref{eq:convex-poisson}. Thus, we can obtain a minimizer of the model \eqref{eq:convex-poisson} by solving the model \eqref{eq:proposed-poisson-reformulated} with $B=\zeroMatrix$.} the proposed algorithm \eqref{eq:K-M}, where $(\sigma,\tau)$ is set by Remark \ref{remark:choice-sigma-tau}. The proposed algorithm was terminated after the residual achieved $\norm{h_k-h_{k-1}}_{\spH} < 10^{-6}$.

Figure \ref{fig:mu-vs-MAE}(a)-(c) show respectively the  averages\footnote{Following \cite{dupe2009}, we use absolute error as a criterion.} of
\begin{align}
  &\mbox{(a) absolute error (AE): } \norm{\bar{x} - \xstar}_1,\quad \quad \quad \quad
  \mbox{(b) squared error (SE): } \norm{\bar{x} -\xstar}^2_{\spX},\\
  &\mbox{(c) number of entries such that $[D\bar{x}]_i>10^{-4}$}
\end{align}
over 1000 independent observations.
From Figure \ref{fig:mu-vs-MAE}(a) and (b), we see that the conventional model~\eqref{eq:convex-poisson} and the proposed model~\eqref{eq:proposed-poisson-reformulated} achieve the lowest MAE and MSE respectively on $\mu = 0.6$ and $\mu=1$. From these figures, we see that the proposed model achieves lower MAE and MSE than conventional model.
As can be seen from Figure~\ref{fig:mu-vs-MAE}(c), the average number of nonzero entries for $D\bar{x}$ by the model~\eqref{eq:convex-poisson} with $\mu=0.6$ is $22.9$, whereas the average number of nonzero entries for $D\bar{x}$ by the model~\eqref{eq:proposed-poisson-reformulated} with $\mu=1$ is $18.1$.
Hence, we conclude that the proposed model~\eqref{eq:proposed-poisson-reformulated} outperforms the conventional model \eqref{eq:convex-poisson} in a view of MAE and MSE, with promoting the sparsity of $D\bar{x}$ more effectively.

\begin{figure}[t]
  \centering
  \begin{minipage}[b]{0.3\linewidth}
    \centering
    \includegraphics[keepaspectratio, scale=0.1]
    {./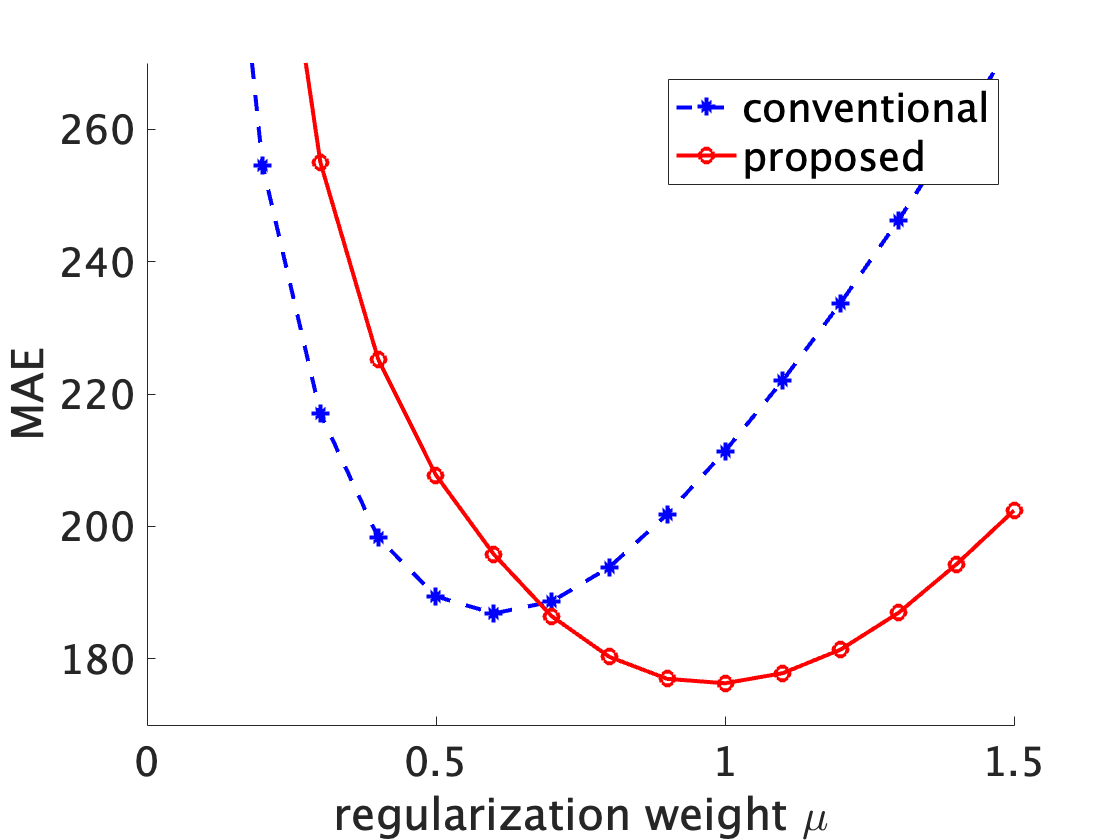}
    \subcaption{}
  \end{minipage}
  \begin{minipage}[b]{0.3\linewidth}
    \centering
    \includegraphics[keepaspectratio,scale=0.1]
    {./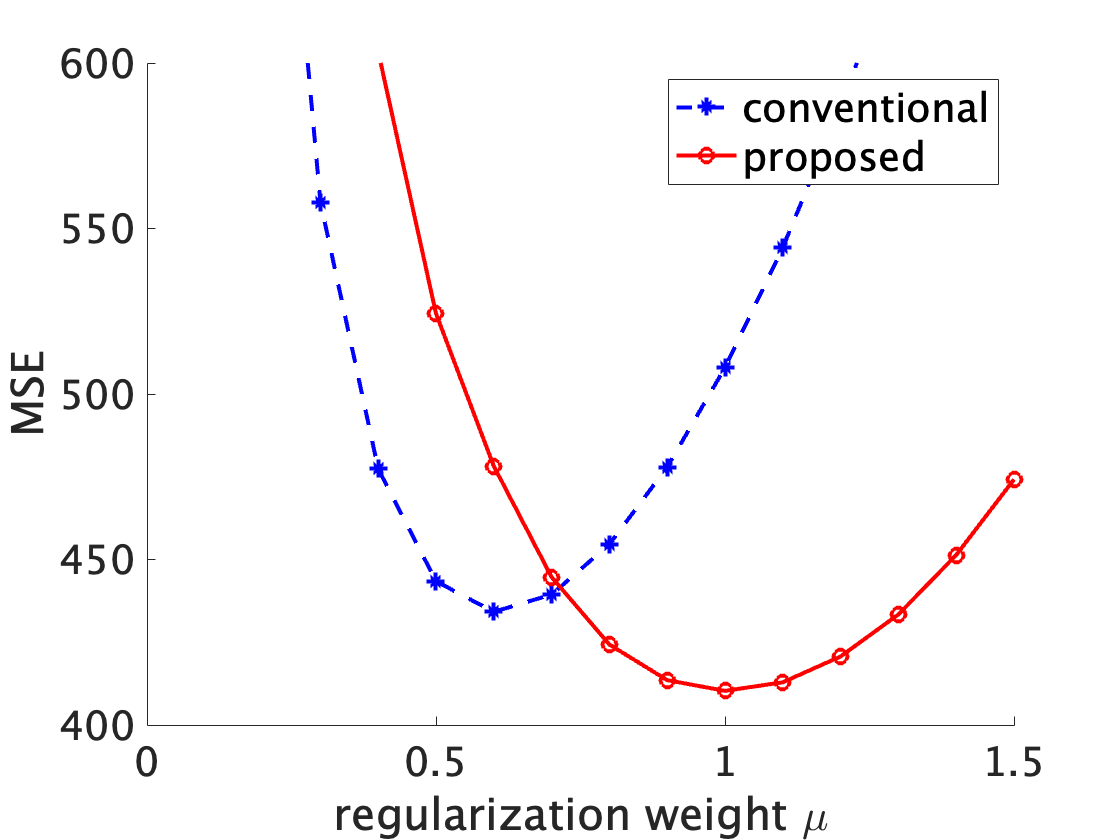}
    \subcaption{}
  \end{minipage}
  \begin{minipage}[b]{0.3\linewidth}
    \centering
    \includegraphics[keepaspectratio,scale=0.1]
    {./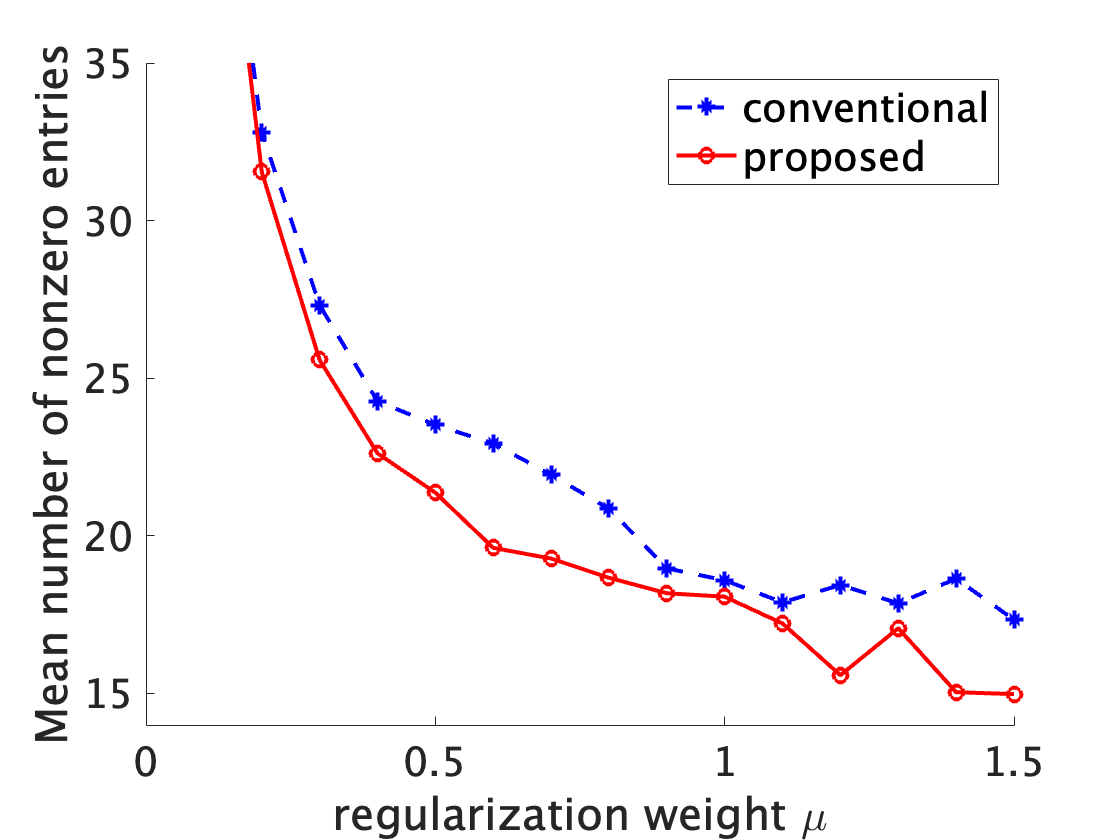}
    \subcaption{}
  \end{minipage}
  \caption{
   regularization weight $\mu$ versus (a) mean absolute error (MAE), (b) mean squared error (MSE), and (c)  mean of number of entries in estimate $\bar{x}$ such that $[D\bar{x}]_i>10^{-4}$
   (Note: $\norm{D \xstar}_0 = 5$)
  }
  \label{fig:mu-vs-MAE}
\end{figure}

Figure \ref{fig:reconst-examples-poisson}(a) shows the target signal $\xstar$ and (b-d) show reconstruction examples by the models \eqref{eq:convex-poisson} and \eqref{eq:proposed-poisson-reformulated}.

\begin{figure}[ht]
  \centering
  \begin{minipage}[b]{0.45\linewidth}
    \centering
    \includegraphics[keepaspectratio, scale=0.15]
    {./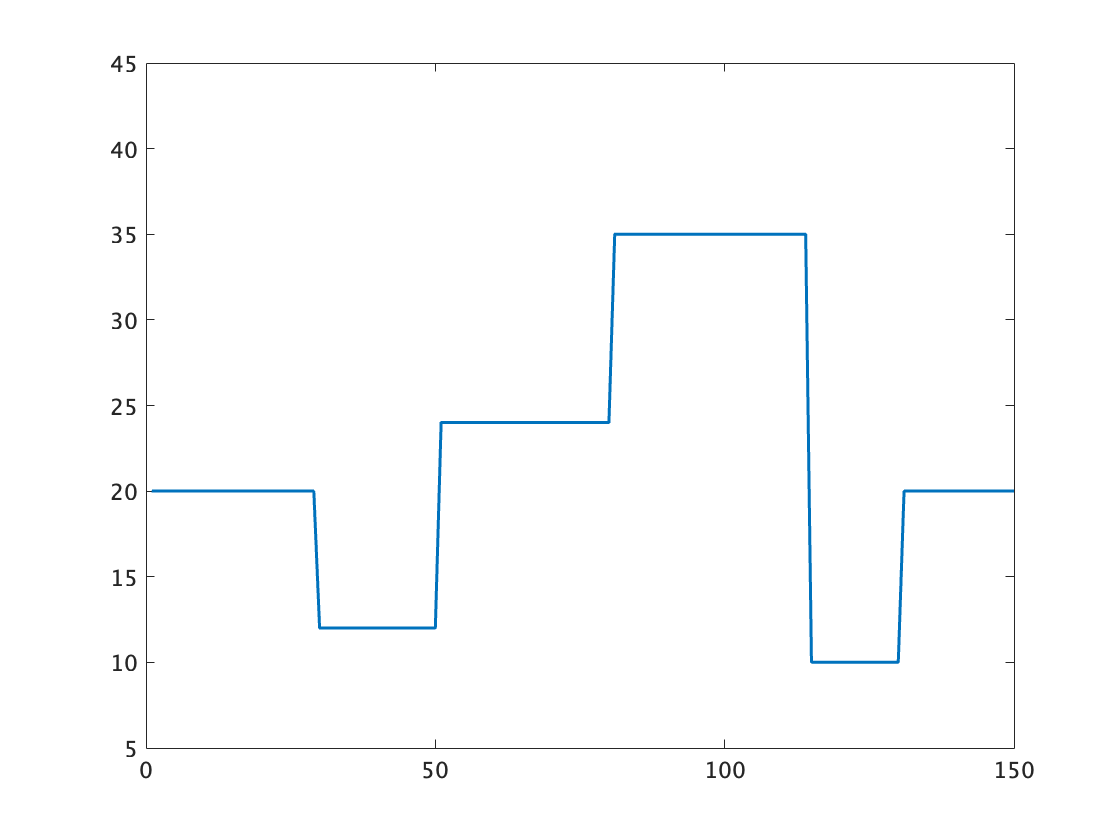}
    \subcaption{}
  \end{minipage}
  \begin{minipage}[b]{0.45\linewidth}
    \centering
    \includegraphics[keepaspectratio, scale=0.15]
    {./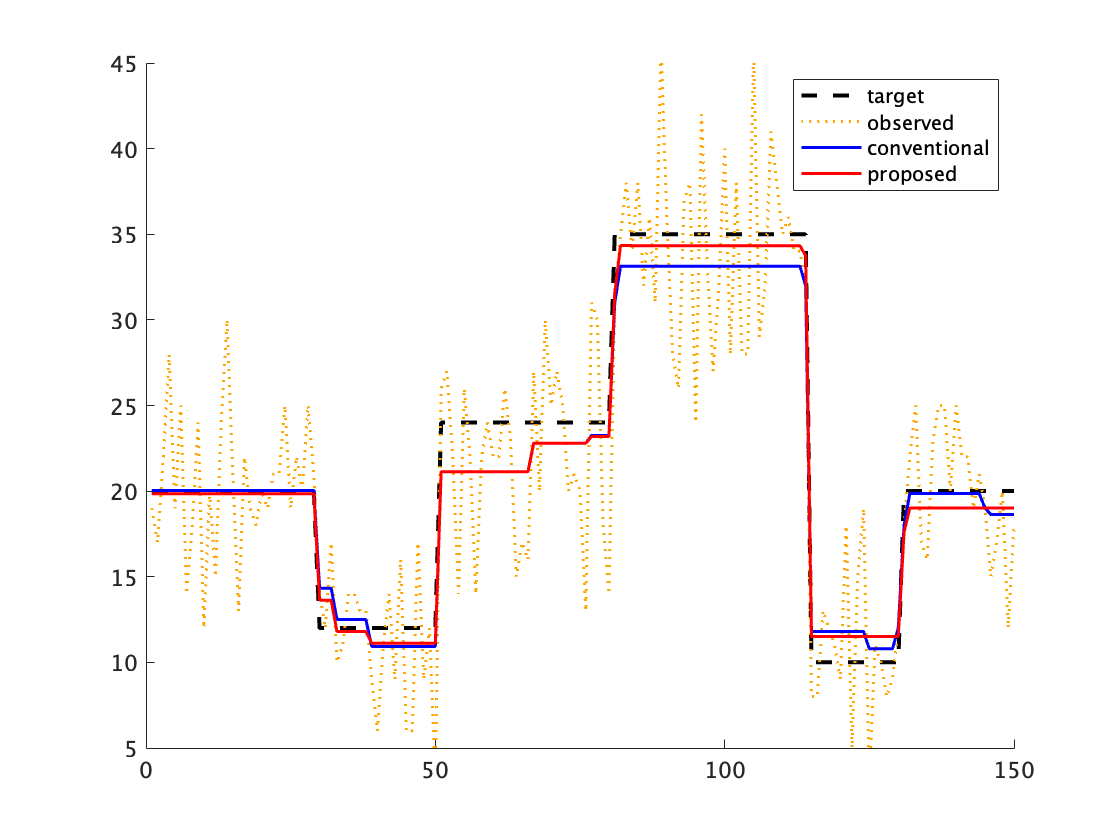}
    \subcaption{}
  \end{minipage}\\
   \begin{minipage}[b]{0.45\linewidth}
    \centering
    \includegraphics[keepaspectratio, scale=0.15]
    {./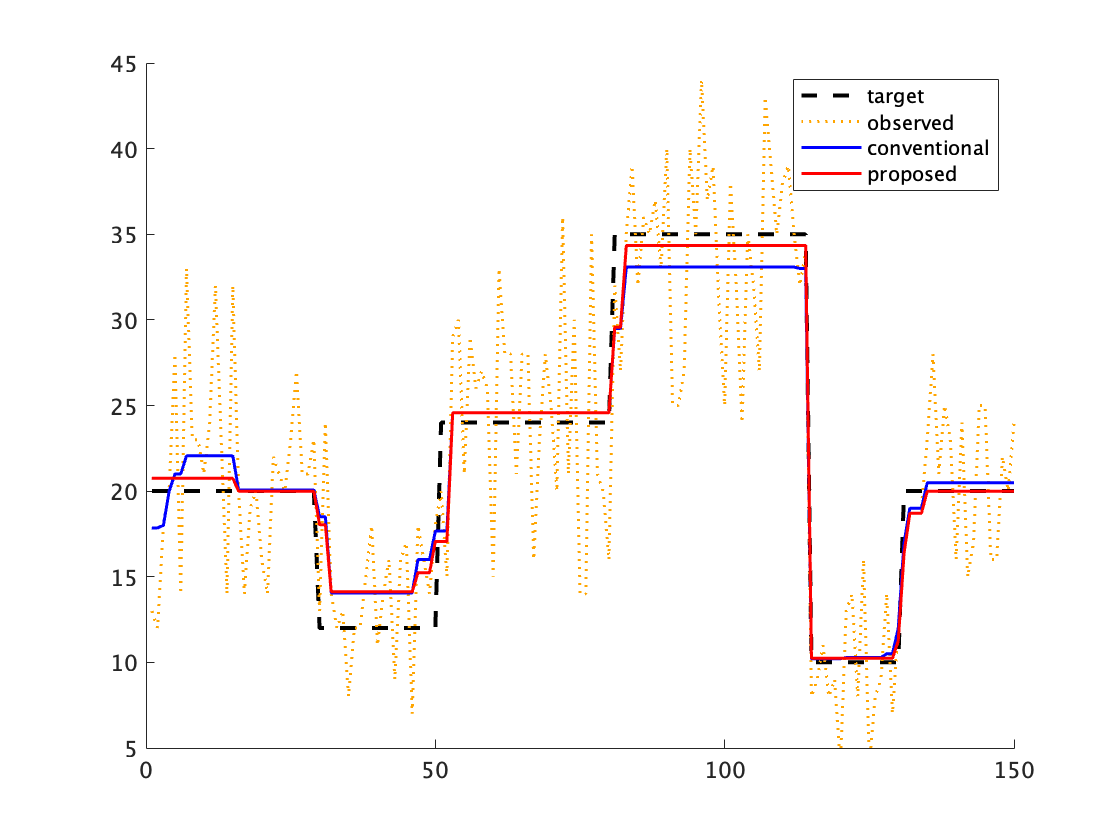}
    \subcaption{}
  \end{minipage}
  \begin{minipage}[b]{0.45\linewidth}
    \centering
    \includegraphics[keepaspectratio, scale=0.15]
    {./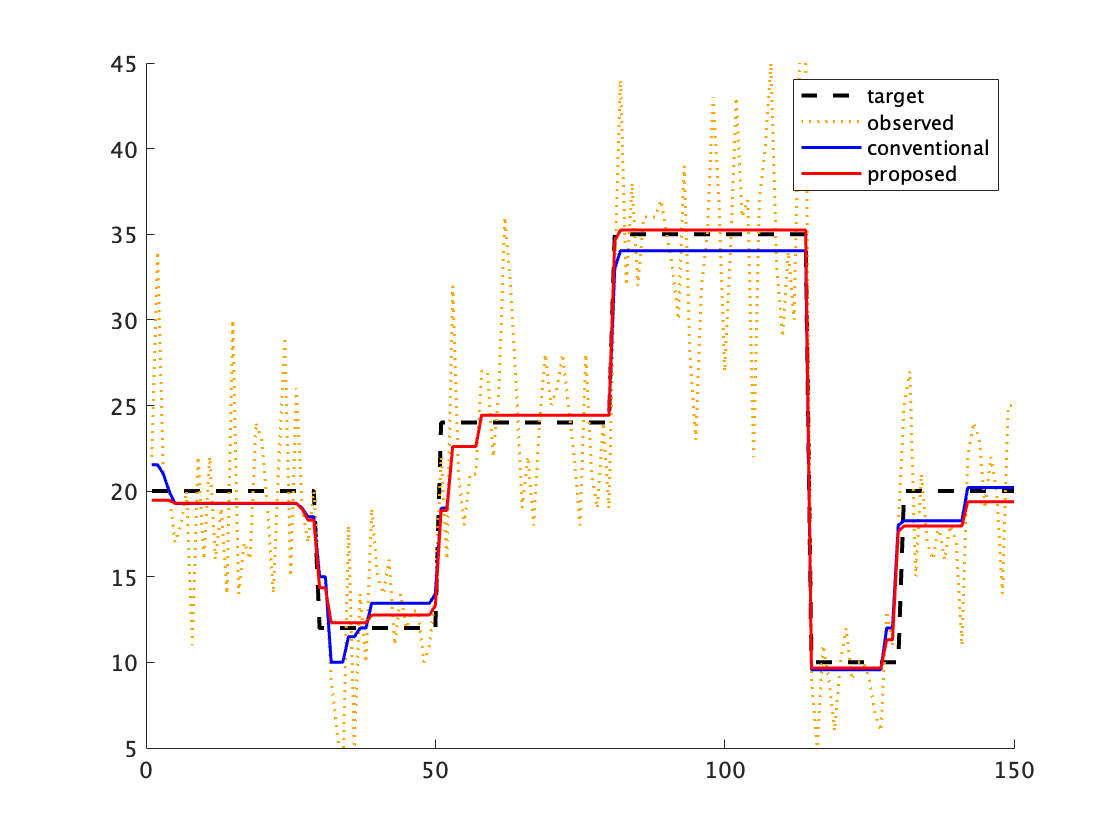}
    \subcaption{}
  \end{minipage}
  \caption{
   (a) target signal $\xstar$, 
   (b-d) reconstruction examples by \eqref{eq:convex-poisson} (blue line) and by the proposed model \eqref{eq:proposed-poisson} (red line) 
   with the target signal (black dotted) and the observed signal (yellow dotted)
  }
  \label{fig:reconst-examples-poisson}
\end{figure}

\subsection{Simultaneous declipping and Gaussian denoising}
\label{sec:declipping}
\subsubsection{Conventional convex model for simultaneous declipping and denoising}
In this section, we consider a simultaneous declipping and denoising problem.
The task is to estimate the target signal $\xstar\in\setR^n$ from
\begin{equation}
  \label{eq:clip-observation-numerical}
  y = \operatorname{clip}_{\vartheta}(A \xstar+\varepsilon) \in\setR^m,
\end{equation}
where $A\in\setR^{m\times n }$ is the observation matrix,  $\operatorname{clip}_{\vartheta}:\setR^m \to \setR^m$ is defined with $\vartheta\in\setPR$ as an entrywise operator:
\begin{equation}
  [\operatorname{clip}_{\vartheta} (u)]_i \coloneqq \begin{cases}
    [u]_i, & |[u]_i| < \vartheta\\
    \vartheta \cdot\operatorname{sign}([u]_i), & |[u]_i| \geq\vartheta
  \end{cases}
\end{equation}
for every $u\in\setR^m$ and $i\in\{1,2,\ldots,m\}$, and each noise $[\varepsilon]_i$ follows Gaussian distribution with zero mean and known variance $s^2\ (s>0)$. 
For estimation of $\xstar$, we have a priori knowledge that $\opL \xstar$ is sparse with a certain linear operator $\opL$.
Such an estimation problem arises, e.g., in image processing \cite{foi2009} and audio processing~\cite{zaviska2021}.

Recently, \cite{banerjee2024} introduced the data fidelity function $\fBeforeExtension^{\lrangle{{\mathrm{clip}}}}\circ A$ by taking the negative logarithm of the following wide-sense likelihood function \cite[(15)]{banerjee2024}:
\begin{equation}
  \label{eq:clipping-likelihood}
  \left(\prod_{i\in \mathbb{S}_{uc}} \frac{1}{s\sqrt{2\pi}} e^{-\frac{([y]_i - [Ax]_i)^2}{2s^2}}\right)
  \left(
    \prod_{i\in \mathbb{S}_{+}} 
    P([Ax]_i+\varepsilon_i \geq \vartheta)
  \right)
  \left(
    \prod_{i\in \mathbb{S}_{-}} 
    P([Ax]_i+\varepsilon_i \leq -\vartheta)
  \right),
\end{equation}
where $\mathbb{S}_{uc}\coloneqq \{i \in\{1,2,\ldots,m\}| |y_i|<\vartheta\}, \mathbb{S}_{+}\coloneqq \{i \in\{1,2,\ldots,m\}| y_i=\vartheta\}$ and $\mathbb{S}_{-}\coloneqq \{i \in\{1,2,\ldots,m\}| y_i=-\vartheta\}$ are index sets respectively of unclipped measurements, positively clipped measurements and negatively clipped measurements.
From \eqref{eq:clipping-likelihood}, the observation loss function $\fBeforeExtension^{\lrangle{{\mathrm{clip}}}}$ can be expressed as $\fBeforeExtension^{\lrangle{{\mathrm{clip}}}}:\setR^m\to\setR: u\mapsto \sum_{i=1}^m \fBeforeExtension^{\lrangle{{\mathrm{clip}}}}_i([u]_i)$ with
    \begin{equation}
      \label{eq:def-clip-i}
      \fBeforeExtension^{\lrangle{{\mathrm{clip}}}}_i(t)\coloneqq
      \begin{cases}
        \frac{1}{2}\left(\frac{[y]_i - t}{s}\right)^2, & i \in\mathbb{S}_{uc}\\
        -\log \left(\int_{\vartheta - t}^{\infty}\exp\left(
          - \frac{\alpha^2}{2s^2}
        \right) \mathrm{d} \alpha \right), &i\in\mathbb{S}_+\\
        -\log \left(\int_{-\infty}^{-\vartheta- t}\exp\left(
          - \frac{\alpha^2}{2s^2}
        \right) \mathrm{d}\alpha \right), &i\in\mathbb{S}_-.
      \end{cases}
    \end{equation}
    With a regularizer $\norm{\cdot}_1\circ\opL$, \cite{banerjee2024} formulated\footnote{The reference \cite{banerjee2024} introduced originally the model (5.10) with $(\opC, \Cz)\coloneqq (\Id, \setR^m)$.} an optimization model for the inverse problem \eqref{eq:clip-observation-numerical} as 
    \begin{equation}
      \label{eq:numerical-conventional}
      \minimize_{\opC x\in\Cz} \fBeforeExtension^{\lrangle{{\mathrm{clip}}}} \circ A(x) + \mu \norm{\cdot}_{1}\circ \opL (x).
    \end{equation}
\subsubsection{Nonconvex enhancement of convex model for simultaneous declipping and denoising and reformulation into Problem \ref{prob:cLiGME-w-general-fidelity}}
As a nonconvex enhancement of the model \eqref{eq:numerical-conventional}, we propose 
\begin{equation}
  \label{eq:numerical-proposed}
  \minimize_{\opC x\in\Cz} \fBeforeExtension^{\lrangle{{\mathrm{clip}}}} \circ A(x) + \mu (\norm{\cdot}_{1})_B\circ \opL (x).
\end{equation}
By setting\footnote{
  A similar constraint is utilized in saturation consistency signal recovery \cite{laska2011}. 
} $\varpi \in\setNNR$, $\opC\coloneqq A$ and $\Cz \coloneqq \Pi \coloneqq \bigtimes_{i=1}^m\Pi_i$ with
\begin{equation}
  \label{eq:Pi-i-clipping}
  \Pi_i \coloneqq \begin{cases}
    \setR, & i\in\mathbb{S}_{uc}\\
    [ \vartheta - \varpi , \infty),  &i \in \mathbb{S}_{+}\\
    (-\infty, -\vartheta + \varpi], & i\in \mathbb{S}_{-},
  \end{cases}
\end{equation}
the following lemma verifies that
the model \eqref{eq:numerical-proposed} is an instance of Problem \ref{prob:relaxed-fidelity}.
\begin{lemma}
  \label{lemma:monotonicity-declipping}
  Consider the model \eqref{eq:numerical-proposed}.
  Let
  $p(\cdot):\setR\to\setR$ be the probability density function of Gaussian distribution with zero mean and a standard derivation $s>0$, and $\operatorname{Pr}(\cdot):\setR\to\setR$ be its cumulative distribution function, i.e.,
  \begin{equation}
     p(t) \coloneqq \frac{1}{\sqrt{2\pi s^2}} \exp\left(-\frac{t^2}{2s^2}\right), \ \operatorname{Pr}(t) \coloneqq\int_{-\infty}^{t} p(\alpha) \mathrm{d}\alpha.
    \end{equation}
  Then, for each $i\in \{1,2,\ldots,m\}$, $\fBeforeExtension^{\lrangle{\mathrm{clip}}}_i$ in \eqref{eq:def-clip-i} is twice continuously differentiable over $\setR$ with the first- and second-order derivative
    \begin{equation}
      \label{eq:clip-gradient}
      {\fBeforeExtension^{\lrangle{{\mathrm{clip}}}}_i}'(t)\coloneqq
      \begin{cases}
        \frac{1}{s^2}\left(t-[y]_i\right),  & i\in \mathbb{S}_{uc}\\
         - \frac{p(t-\vartheta)}{\operatorname{Pr}(t-\vartheta)}, &i\in \mathbb{S}_{+}\\
         \frac{p(-\vartheta-t)}{\operatorname{Pr}(-\vartheta-t)}, &i\in \mathbb{S}_{-},
      \end{cases}
      \quad \quad
      {\fBeforeExtension^{\lrangle{{\mathrm{clip}}}}_i}''(t)\coloneqq
       \begin{cases}
        \frac{1}{s^2},  &i\in \mathbb{S}_{uc}\\
         \left(-\frac{p(\cdot)}{\operatorname{Pr}(\cdot)}\right)'(t - \vartheta), &i\in \mathbb{S}_{+}\\
         \left(-\frac{p(\cdot)}{\operatorname{Pr}(\cdot)}\right)'(-t - \vartheta), &i\in \mathbb{S}_{-},
       \end{cases}
    \end{equation}
    where 
    \begin{equation}
    \label{eq:hessian-clipping-positive}
    \left(-\frac{p(\cdot)}{\operatorname{Pr}(\cdot)}\right)'(t)=
    \frac{p(t)}{\operatorname{Pr}(t)^2}\left( p(t) + \frac{t}{s^2} \operatorname{Pr}(t)\right).
  \end{equation}
  Moreover, by letting $\varpi \in\setNNR$, $\opC\coloneqq A$ and $\Cz \coloneqq \Pi \coloneqq \bigtimes_{i=1}^m\Pi_i$ with $\Pi_i$ in \eqref{eq:Pi-i-clipping}, the model \eqref{eq:numerical-proposed} is an instance of Problem \ref{prob:relaxed-fidelity}, where
  \begin{equation}
    \label{eq:inf-hessian-clip-lemma}
    \inf_{t\in\Pi_i}{ \fBeforeExtension^{\lrangle{\mathrm{clip}}}}''(t) 
    =\begin{cases}
      \frac{1}{s^2}, & i\in \mathbb{S}_{uc}\\
      0, & i\in \mathbb{S}_{+}\cup \mathbb{S}_{-}
    \end{cases}
  \end{equation}
  and
  \begin{equation}
    \label{eq:sup-hessian-clip-lemma}
    \sup_{t\in\Pi_i}{ \fBeforeExtension^{\lrangle{\mathrm{clip}}}}''(t) 
    = \begin{cases}
      \frac{1}{s^2}, & i\in \mathbb{S}_{+}\\
      \left(-\frac{p(\cdot)}{\operatorname{Pr}(\cdot)}\right)'(-\varpi), & i\in \mathbb{S}_{+}\cup \mathbb{S}_{-}.
    \end{cases}
  \end{equation}
\end{lemma}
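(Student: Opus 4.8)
The proof addresses four points in sequence: (I) the $C^2$ regularity of each $\fBeforeExtension^{\lrangle{\mathrm{clip}}}_i$ together with the derivative formulas \eqref{eq:clip-gradient}--\eqref{eq:hessian-clipping-positive}; (II) the membership $\fBeforeExtension^{\lrangle{\mathrm{clip}}}\in\Gamma_0(\spY)$; (III) the extrema \eqref{eq:inf-hessian-clip-lemma}--\eqref{eq:sup-hessian-clip-lemma} of ${\fBeforeExtension^{\lrangle{\mathrm{clip}}}_i}''$ over $\Pi_i$; and (IV) the conclusion that \eqref{eq:numerical-proposed} is an instance of Problem~\ref{prob:relaxed-fidelity}. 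For (I), the first step is to rewrite \eqref{eq:def-clip-i} through $\operatorname{Pr}$: since $\int_{-\infty}^{\infty}\exp(-\alpha^2/(2s^2))\,\mathrm{d}\alpha=s\sqrt{2\pi}$ and $p$ is even, one has $\int_{\vartheta-t}^{\infty}\exp(-\alpha^2/(2s^2))\,\mathrm{d}\alpha=s\sqrt{2\pi}\,\operatorname{Pr}(t-\vartheta)$ and $\int_{-\infty}^{-\vartheta-t}\exp(-\alpha^2/(2s^2))\,\mathrm{d}\alpha=s\sqrt{2\pi}\,\operatorname{Pr}(-\vartheta-t)$, so that, up to an additive constant, $\fBeforeExtension^{\lrangle{\mathrm{clip}}}_i(t)=-\log\operatorname{Pr}(t-\vartheta)$ for $i\in\mathbb{S}_{+}$, $\fBeforeExtension^{\lrangle{\mathrm{clip}}}_i(t)=-\log\operatorname{Pr}(-\vartheta-t)$ for $i\in\mathbb{S}_{-}$, and $\fBeforeExtension^{\lrangle{\mathrm{clip}}}_i$ is a quadratic for $i\in\mathbb{S}_{uc}$. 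As the Gaussian CDF $\operatorname{Pr}$ is $C^\infty$ on $\setR$ and strictly positive everywhere, each $\fBeforeExtension^{\lrangle{\mathrm{clip}}}_i$ is $C^\infty$, in particular $C^2$, on $\setR$; the formulas \eqref{eq:clip-gradient} then follow by the chain rule with $\operatorname{Pr}'=p$ and $p'(t)=-(t/s^2)p(t)$, and \eqref{eq:hessian-clipping-positive} by expanding $\left(-\frac{p(\cdot)}{\operatorname{Pr}(\cdot)}\right)'=(p^2-p'\operatorname{Pr})/\operatorname{Pr}^2$ and substituting $p'=-(\cdot/s^2)p$. These computations are routine.

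For (II), each $\fBeforeExtension^{\lrangle{\mathrm{clip}}}_i$ is real-valued and continuous on $\setR$, hence proper and lower semicontinuous, so $\fBeforeExtension^{\lrangle{\mathrm{clip}}}_i\in\Gamma_0(\setR)$ reduces to convexity, i.e., to ${\fBeforeExtension^{\lrangle{\mathrm{clip}}}_i}''\geq 0$. This is immediate for $i\in\mathbb{S}_{uc}$, and for $i\in\mathbb{S}_{+}\cup\mathbb{S}_{-}$ it suffices, by \eqref{eq:hessian-clipping-positive}, that $p(u)+(u/s^2)\operatorname{Pr}(u)\geq 0$ for all $u\in\setR$; this is clear for $u\geq 0$, and for $u<0$ it follows from the Gaussian tail bound $\operatorname{Pr}(u)\leq s^2 p(u)/(-u)$, which in turn follows from $\operatorname{Pr}(u)=\int_{-\infty}^{u}p(\alpha)\,\mathrm{d}\alpha\leq\int_{-\infty}^{u}(\alpha/u)p(\alpha)\,\mathrm{d}\alpha$ (using $\alpha/u\geq 1$ for $\alpha\leq u<0$) together with $\int_{-\infty}^{u}\alpha p(\alpha)\,\mathrm{d}\alpha=-s^2 p(u)$. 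Hence $\fBeforeExtension^{\lrangle{\mathrm{clip}}}_i\in\Gamma_0(\setR)$ for every $i$, and $\fBeforeExtension^{\lrangle{\mathrm{clip}}}\in\Gamma_0(\spY)$ as a separable sum of such functions; alternatively one may simply invoke the log-concavity of the Gaussian CDF.

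For (III) and (IV): the case $i\in\mathbb{S}_{uc}$ is trivial, ${\fBeforeExtension^{\lrangle{\mathrm{clip}}}_i}''\equiv 1/s^2$ on $\Pi_i=\setR$. For $i\in\mathbb{S}_{+}$ one has $\Pi_i=[\vartheta-\varpi,\infty)$ and ${\fBeforeExtension^{\lrangle{\mathrm{clip}}}_i}''(t)=g(t-\vartheta)$ with $g\coloneqq\left(-\frac{p(\cdot)}{\operatorname{Pr}(\cdot)}\right)'$, where $u=t-\vartheta$ ranges over $[-\varpi,\infty)$; the case $i\in\mathbb{S}_{-}$ is symmetric, with $u=-t-\vartheta$ again ranging over $[-\varpi,\infty)$. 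By (II), $g>0$ on $\setR$, and since $p(u)\to 0$ faster than any power of $u$ while $\operatorname{Pr}(u)\to 1$ as $u\to+\infty$, \eqref{eq:hessian-clipping-positive} gives $g(u)\to 0$; hence $\inf_{u\geq-\varpi}g(u)=0$ (not attained), which is \eqref{eq:inf-hessian-clip-lemma}, and $g$ is bounded above on $[-\varpi,\infty)$ by continuity together with this decay. To identify $\sup_{u\geq-\varpi}g(u)$ with $g(-\varpi)$ as in \eqref{eq:sup-hessian-clip-lemma}, it remains to show that $g$ is non-increasing on $\setR$, equivalently that the reverse-hazard-rate function $p/\operatorname{Pr}$ is convex on $\setR$; this is a classical (and tight) property of the Gaussian distribution, which can be cited or verified from the Riccati-type identity $(p/\operatorname{Pr})'=-(p/\operatorname{Pr})((\cdot/s^2)+p/\operatorname{Pr})$. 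Granting \eqref{eq:inf-hessian-clip-lemma}--\eqref{eq:sup-hessian-clip-lemma}, every $\fBeforeExtension^{\lrangle{\mathrm{clip}}}_i\in\Gamma_0(\setR)$ is $C^2$ on $\Pi_i$ with $\sup_{t\in\Pi_i}{\fBeforeExtension^{\lrangle{\mathrm{clip}}}_i}''(t)<\infty$, and the diagonal matrix $\Lambda$ with $[\Lambda]_{i,i}=\inf_{t\in\Pi_i}{\fBeforeExtension^{\lrangle{\mathrm{clip}}}_i}''(t)$ has entries $1/s^2$ on $\mathbb{S}_{uc}$ and $0$ on $\mathbb{S}_{+}\cup\mathbb{S}_{-}$, so $\Lambda\succneqq\zeroMatrix_{\spY}$ whenever $\mathbb{S}_{uc}\neq\emptyset$ (the nondegenerate situation for \eqref{eq:clip-observation-numerical}); together with the routine consistency check that the $\Pi_i$ of \eqref{eq:Pi-i-clipping} coincide with those Problem~\ref{prob:relaxed-fidelity} builds from $(\opC,\Cz)=(A,\Pi)$, this shows \eqref{eq:numerical-proposed} is an instance of Problem~\ref{prob:relaxed-fidelity}.

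The main obstacle is the monotonicity of $g=\left(-\frac{p(\cdot)}{\operatorname{Pr}(\cdot)}\right)'$ on $\setR$ (equivalently, the convexity of $p/\operatorname{Pr}$), which is needed only for the exact supremum in \eqref{eq:sup-hessian-clip-lemma}; the remaining ingredients---the smoothness, the derivative identities, convexity of $\fBeforeExtension^{\lrangle{\mathrm{clip}}}_i$ via the Gaussian tail bound, and the mere finiteness of the supremum---are routine once \eqref{eq:def-clip-i} has been re-expressed through $\operatorname{Pr}$.
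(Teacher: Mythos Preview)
Your proof is correct and follows essentially the same route as the paper's: both reduce the sup/inf computation to the strict monotonicity of $g=(-p/\operatorname{Pr})'$ and then read off the endpoint and limit values. The only substantive difference is how the key monotonicity step is handled: you flag it as the main obstacle and appeal to the classical convexity of the Gaussian reverse hazard rate (mentioning the Riccati identity $(p/\operatorname{Pr})'=-(p/\operatorname{Pr})(\cdot/s^2+p/\operatorname{Pr})$ as a starting point), whereas the paper rewrites $-p(t)/\operatorname{Pr}(t)=-\tfrac{1}{s}\mathcal{R}(-t/s)$ in terms of the inverse Mill's ratio $\mathcal{R}$ and cites Sampford's inequality $\mathcal{R}''>0$ \cite[(4)]{sampford1953} directly, which immediately gives $(-p/\operatorname{Pr})''<0$. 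Your additional explicit verification of convexity via the tail bound $\operatorname{Pr}(u)\le s^2 p(u)/(-u)$ for $u<0$ is correct and self-contained, while the paper simply inherits the derivative formulas (and implicitly convexity) from \cite{banerjee2024}.
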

\begin{proof}
  See Appendix \ref{appendix:proof-clipping-hessian}
\end{proof} 

From Corollary \ref{corollary:existence-minimizer-extension}, we can reformulate the model \eqref{eq:numerical-proposed} into the following as an instance of Problem \ref{prob:cLiGME-w-general-fidelity}:
\begin{equation}
  \label{eq:numerical-proposed-reformulated}
  \minimize_{\opC x\in\Cz} f^{\lrangle{{\mathrm{clip}}}} \circ A(x) + \mu (\norm{\cdot}_{1})_B\circ \opL (x).
\end{equation}
Since $f^{\lrangle{{\mathrm{clip}}}}\circ A$ is $1$-strongly convex relative to $q_{A^*\Lambda A}$ with a diagonal matrix $\Lambda$ defined by \eqref{eq:def-lambda} with \eqref{eq:inf-hessian-clip-lemma}, the cost function in \eqref{eq:numerical-proposed-reformulated} with a GME matrix satisfying 
\begin{equation}
  \label{eq:oc-cond-clip}
  A^*\Lambda A - \mu\opL^*B^*B\opL\succeq \zeroMatrix_{\spX}
\end{equation} 
becomes convex by Corollary \ref{corollary:existence-minimizer-extension}(c). 
Moreover, if $\nullsp \opL=\{0_{\spX}\}$, the existence of a minimizer of \eqref{eq:numerical-proposed-reformulated} is guaranteed by Corollary \ref{corollary:existence-minimizer-extension}(d)(iii) because $\fBeforeExtension^{\lrangle{\mathrm{clip}}}$ is bounded below.

\subsubsection{Numerical experiments of simultaneous declipping and denoising}
Following \cite{banerjee2024}, we conducted numerical experiments on estimation of $\xstar \in \setR^n \ (n\coloneqq m\coloneqq256)$ from its noisy observation $y\in\setR^m$ in~\eqref{eq:clip-observation-numerical}, where $A\coloneqq I_n$ and $\xstar$ was given by the inverse Discrete Cosine Transform (DCT) of a randomly chosen sparse coefficient vector. The target signal $\xstar$ was normalized to satisfy $\norm{\xstar}_{\infty}=0.8$.
We compared the conventional model \eqref{eq:numerical-conventional} and the proposed model \eqref{eq:numerical-proposed-reformulated} for every $(\vartheta, s)\in\{0.4,0.6\}\times \{s_{5},s_{10},s_{15}\}$, where $s_5, s_{10}$ and $s_{15}$ are standard deviations of Gaussian noise achieving respectively $5$dB, $10$dB and $15$dB of SNR: $20\log_{10}\frac{\norm{x^\star}_{\mathbb{R}^m}}{\mathbb{E}[\norm{\varepsilon}_{\mathbb{R}^m}]}$.
For both models \eqref{eq:numerical-conventional} and \eqref{eq:numerical-proposed-reformulated}, we employed $(\opC, \Cz)$ as \eqref{eq:Pi-i-clipping} with $\varpi\coloneqq 10s$ and $\opL\coloneqq \opL_{\mathrm{DCT}}$ (DCT matrix \cite{rao2007}). We also employed a simple GME matrix $B\coloneqq \sqrt{\frac{0.99}{\mu}} \sqrt{\Lambda} \opL_{\mathrm{DCT}}^{-1}$ for the proposed model \eqref{eq:numerical-proposed-reformulated} to achieve the condition \eqref{eq:oc-cond-clip}.
For minimizations of \eqref{eq:numerical-conventional} and \eqref{eq:numerical-proposed-reformulated}, we applied
the proposed algorithm \eqref{eq:K-M}, where $(\sigma, \tau)$ is set as in Remark \ref{remark:choice-sigma-tau}. 
We stopped the proposed algorithm \eqref{eq:K-M} after the residual achieved $\norm{h_k-h_{k-1}}_{\spH}<10^{-4}$ in every experiment.

\begin{figure}[ht]
  \centering
  \begin{minipage}[b]{0.46\linewidth}
    \centering
    \includegraphics[keepaspectratio, scale=0.11]
    {./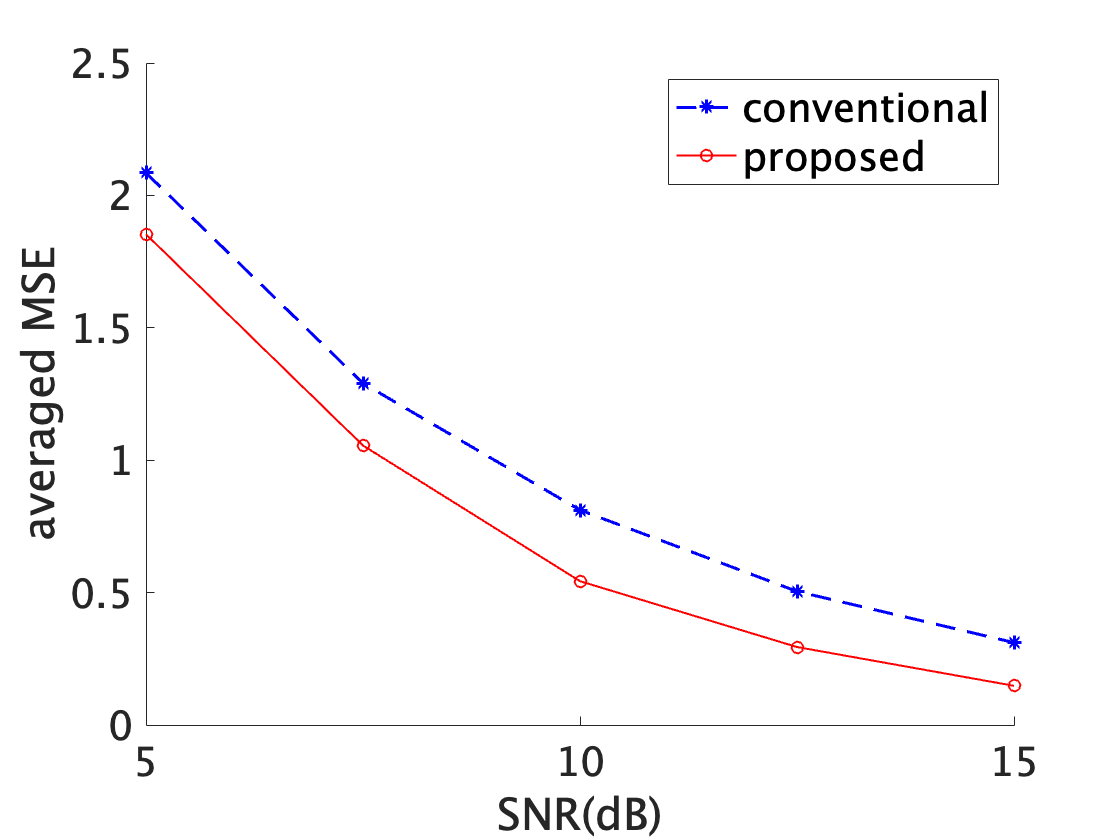}
    \subcaption{$\vartheta=0.4$}
  \end{minipage}
  \begin{minipage}[b]{0.46\linewidth}
    \centering
    \includegraphics[keepaspectratio, scale=0.11]
    {./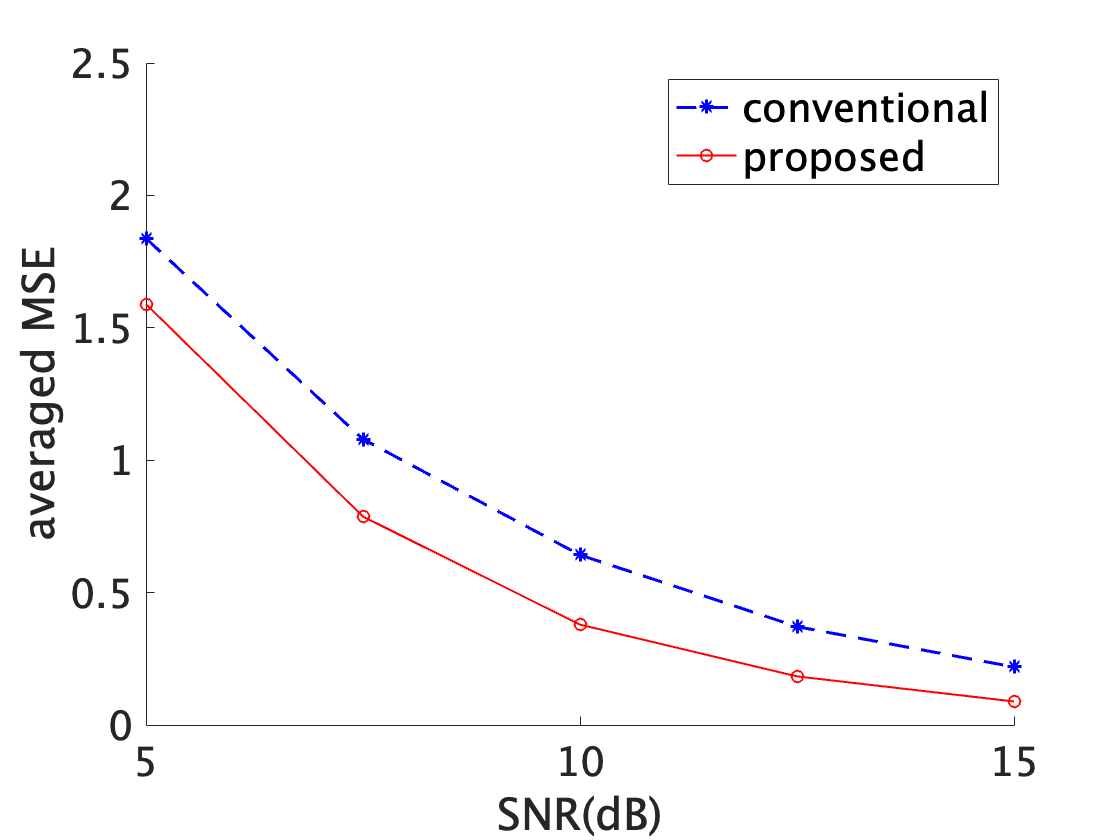}
    \subcaption{$\vartheta=0.6$}
  \end{minipage}
  \caption{
    SNR versus Averaged MSE
  }
  \label{fig:denoise-example}
\end{figure}

Figure \ref{fig:denoise-example} shows the average of MSE: $\norm{x^\star - \bar{x}}_2^2$ of estimates $\bar{x}$ by the conventional model \eqref{eq:numerical-conventional} and by the proposed model~\eqref{eq:numerical-proposed} over 100 realizations of Gaussian noise $\varepsilon$.
For each $(\vartheta, s^2)$, we choose the best regularization parameter $\mu$ from $\{j\in\setN_0\mid 1\leq j \leq 100\}$.
From this figure, we see that the proposed model~\eqref{eq:numerical-proposed} outperforms the model \eqref{eq:numerical-conventional} in all cases.

\section{Conclusion}
In this paper, we proposed a nonconvexly regularized convex model made with a smooth data fidelity function and the GME regularizer. For the proposed model under the overall convexity condition, we presented sufficient conditions for the existence of a minimizer and a proximal splitting type algorithm.
To verify the effectiveness of the proposed methods, we conducted numerical experiments in scenarios of \emph{Poisson denoising} and \emph{simultaneous denoising and declipping}.

\newcounter{appnum}
\setcounter{appnum}{1}
\setcounter{theorem}{0}

\appendix
\setcounter{lemma}{0}
\renewcommand{\thelemma}{\Alph{appnum}.\arabic{lemma}}
\setcounter{equation}{0}
\setcounter{fact}{0}
\renewcommand{\thefact}{\Alph{appnum}.\arabic{fact}}
\setcounter{equation}{0}

\renewcommand{\theequation}{\Alph{appnum}.\arabic{equation}}
\section{Additional facts}
\label{appendix:known-facts}

\subsection{Linear algebra}
\begin{fact}[Properties of partitioned matrix]
  \label{fact:properties-partitioned}
  Let $\spH_1,\spH_2$ be finite-dimensional real Hilbert spaces and set $\spH\coloneqq \spH_1\times \spH_2$.
  Let $P\in\setLO{\spH}{\spH}$ can be partitioned as
  $
    P=
    \begin{bmatrix}
      P_1 & P_2 \\
      P_2^* & P_3
    \end{bmatrix}
  $
  with $P_1\in\setLO{\spH_1}{\spH_1}$, $P_2 \in \setLO{\spH_2}{\spH_1}$ and $P_3\in\setLO{\spH_2}{\spH_2}$.
  Then the following hold.
  \begin{enumerate}[(a)]
     \item (\cite[Proposition 16.1(1)]{gallier2011}). Under the invertibility of $P_3$, $P\succ\zeroMatrix_{\spH}$ holds if and only if $P_3\succ \zeroMatrix_{\spH_2}$ and $P_1 - P_2 P_3^{-1}P_2^* \succ \zeroMatrix_{\spH_1}$ hold.
     \item (\cite[Proposition 16.2(1)]{gallier2011}). Under the invertibility of $P_1$, $P\succ\zeroMatrix_{\spH}$ holds if and only if  $P_1\succ \zeroMatrix_{\spH_1}$ and $P_3 - P_2^* P_1^{-1}P_2 \succ \zeroMatrix_{\spH_2}$ hold.
     \item (\cite[(0.7.3.1)]{horn2012}). If $P\succ\zeroMatrix_{\spH}$, $P_1\succ \zeroMatrix_{\spH_1}$ and $P_3\succ\zeroMatrix_{\spH_2}$ hold, then $P^{-1}$ can be expressed as 
    \begin{equation}
      \label{eq:partitioned-inverse}
      P^{-1}= \begin{bmatrix}
        (P_1 - P_2 P_3^{-1} P_2^*)^{-1} 
        & P_1^{-1} P_2 (P_2^* P_1^{-1} P_2 -P_3)^{-1}\\
        P_3^{-1} P_2^*(P_2P_3^{-1} P_2^* - P_1)^{-1}
        & (P_3 - P_2^* P_1^{-1} P_2)^{-1}
      \end{bmatrix},
    \end{equation}
    where $P_3- P_2^* P_1^{-1} P_2$ and $P_1 - P_2 P_3^{-1} P_2^*$ are invertible by the positive definiteness of $P\succ \zeroMatrix_{\spH}$ $P_1 \succ \zeroMatrix_{\spH_1}$ and $P_3\succ \zeroMatrix_{\spH_2}$ (see (a) and (b)).
    \item (\cite[Proposition 8.3]{halko2011}).  If $P\succeq \zeroMatrix_{\spH}$ holds, then we have $\norm{P}_{\mathrm{op}}\leq \norm{P_1}_{\mathrm{op}}+\norm{P_3}_{\mathrm{op}}$.
  \end{enumerate}
\end{fact}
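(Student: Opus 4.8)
The plan is to derive all four items from a single algebraic device, the block LDU (congruence) factorization of the self-adjoint operator $P$, and then read off definiteness, the inverse, and the norm bound as consequences. For (a), under invertibility of $P_3$, I would first verify by direct block multiplication the factorization
\begin{equation}
P = \begin{bmatrix} \Id & P_2 P_3^{-1} \\ 0 & \Id \end{bmatrix} \begin{bmatrix} S_3 & 0 \\ 0 & P_3 \end{bmatrix} \begin{bmatrix} \Id & 0 \\ P_3^{-1} P_2^* & \Id \end{bmatrix} =: L\, D\, L^*,
\end{equation}
where $S_3 \coloneqq P_1 - P_2 P_3^{-1} P_2^*$ and where the rightmost factor is genuinely $L^*$ because $P_3$ self-adjoint forces $(P_2 P_3^{-1})^* = P_3^{-1} P_2^*$. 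Since $L$ is unit block-triangular, hence invertible, the map $X \mapsto L X L^*$ is a congruence; by Sylvester's law of inertia it preserves positive definiteness. Thus $P \succ \zeroMatrix_{\spH}$ iff $D \succ \zeroMatrix_{\spH}$, and as $D$ is block-diagonal this is equivalent to $P_3 \succ \zeroMatrix_{\spH_2}$ together with $S_3 \succ \zeroMatrix_{\spH_1}$, which is exactly (a). The only real work here is the bookkeeping of the block product.

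Item (b) follows from the mirror factorization obtained by eliminating the $(2,1)$ block first, which produces the block-diagonal middle factor $\operatorname{diag}(P_1, S_1)$ with $S_1 \coloneqq P_3 - P_2^* P_1^{-1} P_2$; the identical congruence argument gives the stated equivalence. As a by-product, under the hypotheses of (c) (namely $P \succ \zeroMatrix_{\spH}$, $P_1 \succ \zeroMatrix_{\spH_1}$, $P_3 \succ \zeroMatrix_{\spH_2}$), part (a) yields $S_3 \succ \zeroMatrix_{\spH_1}$ and part (b) yields $S_1 \succ \zeroMatrix_{\spH_2}$, so both Schur complements are invertible, justifying the parenthetical remark in (c).

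For (c) I would invert the factorization of (a): $P^{-1} = (L^*)^{-1} D^{-1} L^{-1}$, using that $L^{-1}$ is obtained by negating the single off-diagonal block of the unit-triangular $L$. Carrying out the product gives the $(1,1)$ block $S_3^{-1}$, the $(2,2)$ block $S_1^{-1}$ (after also invoking the factorization of (b)), and off-diagonal blocks $-S_3^{-1} P_2 P_3^{-1}$ and $-P_3^{-1} P_2^* S_3^{-1}$. To match the stated mixed form I would then use the sign rewrites $(P_2^* P_1^{-1} P_2 - P_3)^{-1} = -S_1^{-1}$ and $(P_2 P_3^{-1} P_2^* - P_1)^{-1} = -S_3^{-1}$, together with the push-through identity $S_3^{-1} P_2 P_3^{-1} = P_1^{-1} P_2 S_1^{-1}$ (which is checked from $S_3 P_1^{-1} P_2 = P_2 P_3^{-1} S_1$, both sides equal to $P_2 - P_2 P_3^{-1} P_2^* P_1^{-1} P_2$). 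A cleaner, verification-only alternative is to multiply the stated right-hand side by $P$ on the left and confirm it equals $\Id$ block by block; I expect to present that route as it sidesteps tracking which Schur complement appears where.

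For (d) I would exploit that for self-adjoint positive semidefinite $P$ one has $\norm{P}_{\mathrm{op}} = \sup_{\norm{(x,y)}_{\spH}=1} \ip{P(x,y)}{(x,y)}_{\spH}$. Expanding, $\ip{P(x,y)}{(x,y)}_{\spH} = \ip{P_1 x}{x}_{\spH_1} + 2\ip{P_2 y}{x}_{\spH_1} + \ip{P_3 y}{y}_{\spH_2}$. The key lemma is the off-diagonal bound for a PSD block operator, $\norm{P_2}_{\mathrm{op}} \leq \norm{P_1}_{\mathrm{op}}^{1/2}\norm{P_3}_{\mathrm{op}}^{1/2}$, which I would get from the standard factorization $P_2 = P_1^{1/2} K P_3^{1/2}$ with $\norm{K}_{\mathrm{op}} \leq 1$ forced by $P \succeq \zeroMatrix_{\spH}$. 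With it, the quadratic form is bounded by $\big(\norm{P_1}_{\mathrm{op}}^{1/2}\norm{x}_{\spH_1} + \norm{P_3}_{\mathrm{op}}^{1/2}\norm{y}_{\spH_2}\big)^2$, and a single application of Cauchy–Schwarz to the pairs $(\norm{P_1}_{\mathrm{op}}^{1/2}, \norm{P_3}_{\mathrm{op}}^{1/2})$ and $(\norm{x}_{\spH_1}, \norm{y}_{\spH_2})$ gives $\ip{P(x,y)}{(x,y)}_{\spH} \leq (\norm{P_1}_{\mathrm{op}} + \norm{P_3}_{\mathrm{op}})\,\norm{(x,y)}_{\spH}^2$, whence the claim. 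The main obstacle I anticipate is precisely this off-diagonal norm bound: it is the one step whose content is genuinely more than block arithmetic, and obtaining the sharp additive constant (rather than the lazy $2\max$ bound) hinges on controlling the cross term via PSD-ness rather than discarding it.
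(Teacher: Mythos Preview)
Your proposal is mathematically sound, but there is nothing to compare it against: the paper does not prove this statement at all. Fact~\ref{fact:properties-partitioned} is presented as a list of known results, each item carrying a citation to an external reference (Gallier for (a) and (b), Horn--Johnson for (c), Halko--Martinsson--Tropp for (d)), and no proof is given in the paper itself.

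That said, the self-contained argument you outline is correct and is essentially the standard one. The congruence factorization $P = L\,\operatorname{diag}(S_3,P_3)\,L^*$ is precisely how (a) and (b) are established in most linear-algebra texts, and inverting that factorization together with the push-through identity $S_3^{-1}P_2P_3^{-1}=P_1^{-1}P_2S_1^{-1}$ does recover the mixed block form in (c). For (d), your route via $\norm{P_2}_{\mathrm{op}}\le\norm{P_1}_{\mathrm{op}}^{1/2}\norm{P_3}_{\mathrm{op}}^{1/2}$ (which follows from writing $P=Q^*Q$ with $Q=[Q_1\ Q_2]$ so that $P_2=Q_1^*Q_2$) followed by completing the square and one Cauchy--Schwarz is exactly the argument in the cited Halko--Martinsson--Tropp paper. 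So your write-up would serve as a perfectly adequate replacement for the bare citations, should the authors want one; it simply goes beyond what the paper itself does.
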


\begin{fact}
  \label{fact:op-norm-inequality}
  Let $P\succeq \zeroMatrix_{\spH}$ and $\alpha \in\setPR$. If $\alpha\Id - P \succ \zeroMatrix_{\spH}$, then $\norm{(\alpha \Id - P)^{-1}}_{\mathrm{op}} \leq (\alpha - \norm{P}_{\mathrm{op}})^{-1}$ holds.
\end{fact}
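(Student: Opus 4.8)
The plan is to reduce the statement to the spectral picture of the self-adjoint operator $Q \coloneqq \alpha\Id - P$ on the finite-dimensional space $\spH$. First I would record that $P \succeq \zeroMatrix_{\spH}$ is self-adjoint, so $P$ is diagonalizable in an orthonormal eigenbasis $(e_i)_{i=1}^n$ of $\spH$ with eigenvalues $\lambda_i \ge 0$; then $Q$ is diagonal in the same basis with $Qe_i = (\alpha - \lambda_i)e_i$. The hypothesis $Q = \alpha\Id - P \succ \zeroMatrix_{\spH}$ forces $\alpha - \lambda_i > 0$ for every $i$, hence $\alpha > \max_i \lambda_i = \norm{P}_{\mathrm{op}}$ (for a positive semidefinite self-adjoint operator the operator norm is the largest eigenvalue). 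In particular the right-hand side $(\alpha - \norm{P}_{\mathrm{op}})^{-1}$ of the asserted inequality is well defined and strictly positive; this is exactly where strict definiteness, rather than merely $\succeq$, is used.

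Next I would compute the operator norm of the inverse directly from the eigendecomposition. Since the eigenvalues of $Q$ are the positive numbers $\alpha - \lambda_i$, the operator $Q^{-1}$ is self-adjoint positive definite with eigenvectors $e_i$ and eigenvalues $(\alpha - \lambda_i)^{-1}$, so
\[
  \norm{(\alpha\Id - P)^{-1}}_{\mathrm{op}} = \max_{1\le i\le n}\frac{1}{\alpha - \lambda_i} = \frac{1}{\alpha - \max_{1\le i\le n}\lambda_i} = \frac{1}{\alpha - \norm{P}_{\mathrm{op}}},
\]
which in fact yields equality, a fortiori the claimed $\leq$. If one prefers to avoid the spectral theorem, the same bound follows from the variational estimate $\ip{Qx}{x}_{\spH} = \alpha - \ip{Px}{x}_{\spH} \ge \alpha - \norm{P}_{\mathrm{op}}$ for unit vectors $x$: writing $y = Qz$ and applying Cauchy--Schwarz gives $(\alpha - \norm{P}_{\mathrm{op}})\norm{z}_{\spH}^2 \le \ip{Qz}{z}_{\spH} = \ip{y}{z}_{\spH} \le \norm{y}_{\spH}\norm{z}_{\spH}$, hence $\norm{Q^{-1}y}_{\spH} = \norm{z}_{\spH} \le (\alpha - \norm{P}_{\mathrm{op}})^{-1}\norm{y}_{\spH}$, and taking the supremum over $\norm{y}_{\spH}\le 1$ finishes the argument.

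There is no genuine obstacle here; the content is a one-line spectral (or Cauchy--Schwarz) computation once it has been checked that $\alpha > \norm{P}_{\mathrm{op}}$. The only point deserving a word of care in the write-up is that the conclusion would be meaningless under the weaker assumption $\alpha\Id - P \succeq \zeroMatrix_{\spH}$, since then $\alpha = \norm{P}_{\mathrm{op}}$ is possible and $\alpha\Id - P$ need not be invertible at all; so the statement should make explicit that strict positive definiteness is what simultaneously guarantees the existence of $(\alpha\Id - P)^{-1}$ and the strict positivity of the denominator $\alpha - \norm{P}_{\mathrm{op}}$.
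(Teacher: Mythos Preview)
Your proof is correct, and your primary spectral-eigenvalue argument is essentially the same as the paper's, which packages it via the Loewner order: from $P \preceq \norm{P}_{\mathrm{op}}\Id$ one gets $\alpha\Id - P \succeq (\alpha - \norm{P}_{\mathrm{op}})\Id \succ \zeroMatrix_{\spH}$, and operator-monotonicity of the inverse then yields $(\alpha\Id - P)^{-1} \preceq (\alpha - \norm{P}_{\mathrm{op}})^{-1}\Id$, whence the norm bound. Your explicit eigenvalue computation additionally shows that equality in fact holds, and your alternative Cauchy--Schwarz argument (which avoids the spectral theorem entirely) is a pleasant self-contained route not present in the paper.
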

\begin{proof}
  From $\alpha \Id - P\succeq (\alpha - \norm{P}_{\mathrm{op}})\Id$ by $\alpha\Id \succeq P$, $(\alpha \Id - P)^{-1}\preceq (\alpha - \norm{P}_{\mathrm{op}})^{-1}\Id$ holds. Hence, we have $\norm{(\alpha \Id - P)^{-1}}_{\mathrm{op}} \leq (\alpha - \norm{P}_{\mathrm{op}})^{-1}$.
\end{proof}

\subsection{Convex analysis, monotone operator theory and fixed point theory of nonexpansive operators}
\begin{fact}[Composition of averaged nonexpansive operators {\cite[Theorem 4(b)]{ogura2002}, \cite[Proposition 2.4]{combettes2015}}] 
  \label{fact:composition-averaged}
  Suppose that each $T_i :\spH\to \spH(i = 1,2)$ is 
  $\alpha_i$-averaged nonexpansive with $\alpha_i \in (0,1)$.
  Then $T_1\circ T_2$ is $\alpha$-averaged nonexpansive with
  $\alpha\coloneqq \frac{\alpha_1+\alpha_2 - 2\alpha_1\alpha_2}{1-\alpha_1\alpha_2}
  \in (0,1)$.
\end{fact}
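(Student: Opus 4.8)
The plan is to argue via the quantitative characterization of averagedness rather than through an explicit decomposition of $T_1\circ T_2$. Recall that a nonexpansive $T:\spH\to\spH$ is $\alpha$-averaged with $\alpha\in(0,1)$ if and only if
\[
(\forall x,y\in\spH)\quad \norm{Tx-Ty}_{\spH}^2\le\norm{x-y}_{\spH}^2-\tfrac{1-\alpha}{\alpha}\norm{(x-Tx)-(y-Ty)}_{\spH}^2
\]
(see, e.g., \cite{CAaMOTiH}). First I would apply this inequality to $T_2$ at an arbitrary pair $(x,y)$ and to $T_1$ at the pair $(T_2x,T_2y)$, then add the two. Setting $z\coloneqq T_2x$, $w\coloneqq T_2y$, $a\coloneqq(x-z)-(y-w)$ and $b\coloneqq(z-T_1z)-(w-T_1w)$, adding and rearranging yields
\[
\norm{T_1T_2x-T_1T_2y}_{\spH}^2+\tfrac{1-\alpha_1}{\alpha_1}\norm{b}_{\spH}^2+\tfrac{1-\alpha_2}{\alpha_2}\norm{a}_{\spH}^2\le\norm{x-y}_{\spH}^2,
\]
and the decisive observation is the identity $(x-T_1T_2x)-(y-T_1T_2y)=a+b$.

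Next I would invoke, for an arbitrary $\lambda>0$, the Young-type bound $\norm{a+b}_{\spH}^2\le(1+\lambda)\norm{a}_{\spH}^2+(1+\lambda^{-1})\norm{b}_{\spH}^2$, so that it suffices to find $\lambda>0$ and $\alpha\in(0,1)$ with $\tfrac{1-\alpha}{\alpha}(1+\lambda)\le\tfrac{1-\alpha_2}{\alpha_2}$ and $\tfrac{1-\alpha}{\alpha}(1+\lambda^{-1})\le\tfrac{1-\alpha_1}{\alpha_1}$; the displayed inequality for $T_1\circ T_2$ then furnishes exactly the characterization above with constant $\alpha$. The two constraints balance (hence the resulting $\alpha$ is smallest) at $\lambda=\tfrac{\alpha_1(1-\alpha_2)}{(1-\alpha_1)\alpha_2}$, which forces $\tfrac{\alpha}{1-\alpha}=\tfrac{\alpha_1}{1-\alpha_1}+\tfrac{\alpha_2}{1-\alpha_2}$; clearing denominators and simplifying gives $\alpha=\tfrac{\alpha_1+\alpha_2-2\alpha_1\alpha_2}{1-\alpha_1\alpha_2}$. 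Finally I would verify $\alpha\in(0,1)$: the numerator equals $\alpha_1(1-\alpha_2)+\alpha_2(1-\alpha_1)>0$, while $1-\alpha_1\alpha_2>0$ and $(1-\alpha_1\alpha_2)-(\alpha_1+\alpha_2-2\alpha_1\alpha_2)=(1-\alpha_1)(1-\alpha_2)>0$, so $0<\alpha<1$.

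The step I expect to demand the most care is pinning down the \emph{sharp} constant. Any fixed choice (e.g.\ $\lambda=1$, i.e.\ the parallelogram bound $\norm{a+b}_{\spH}^2\le2\norm{a}_{\spH}^2+2\norm{b}_{\spH}^2$) already yields a valid but larger averagedness constant; obtaining the stated value requires the scalar optimization over $\lambda$ together with checking that at the optimal $\lambda$ both inequalities hold with equality so that no slack is wasted. An alternative would mirror the resolvent/reflection computation of \cite[Proposition 2.4]{combettes2015} or \cite[Theorem 4(b)]{ogura2002}, but the inequality route above is the most self-contained given the notions already recalled in Section~\ref{sec:Preliminaries-convex}.
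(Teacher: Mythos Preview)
Your argument is correct. However, note that the paper does not supply its own proof of this statement: it is recorded as a \emph{Fact} in Appendix~\ref{appendix:known-facts} with citations to \cite[Theorem~4(b)]{ogura2002} and \cite[Proposition~2.4]{combettes2015}, and is then used as a black box in the proof of Theorem~\ref{thm:convergence-analysis}(c). So there is no ``paper's own proof'' to compare against.

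That said, your route is essentially the one taken in the cited reference \cite[Proposition~2.4]{combettes2015}: work through the inequality characterization of $\alpha$-averagedness, chain the two bounds, use $(\Id-T_1T_2)x-(\Id-T_1T_2)y=a+b$, and optimize the Young parameter to land on the identity $\tfrac{\alpha}{1-\alpha}=\tfrac{\alpha_1}{1-\alpha_1}+\tfrac{\alpha_2}{1-\alpha_2}$, which rearranges to the claimed constant. The computations you give (the value of $\lambda$, the resulting $\tfrac{1-\alpha}{\alpha}=\tfrac{(1-\alpha_1)(1-\alpha_2)}{\alpha_1+\alpha_2-2\alpha_1\alpha_2}$, and the check that $\alpha\in(0,1)$) are all correct. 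The earlier reference \cite{ogura2002} obtains the same constant via a somewhat different bookkeeping, but the content is the same.
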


\begin{fact}[Selected properties of the subdifferential]
  \label{fact:subdifferential}
  \quad
  \begin{enumerate}[(a)]
    \item (Fermat's rule \cite[Theorem 16.3]{CAaMOTiH}). For $\varphi\in\Gamma_0(\spH)$, the following relation holds:
    \begin{equation}
      \label{eq:fermat-rule}
      \bar{x}\in\argmin_{x\in\spH} \varphi(x) \iff 0\in\partial \varphi(x).
    \end{equation}
    \item (Sum rule \cite[Corollary 16.48]{CAaMOTiH}). For $\varphi_1\in\Gamma_0(\spH)$ and $\varphi_2\in\Gamma_0(\spH)$, 
    if $0_{\spH}\in\ri (\dom \varphi_1 - \dom \varphi_2)$, then 
    \begin{equation}
      \label{eq:sum-rule}
      \partial (\varphi_1 +\varphi_2) = \partial \varphi_1 + \partial \varphi_2.
    \end{equation}
    \item (Chain rule \cite[Corollary 16.53]{CAaMOTiH}). For $\psi\in\Gamma_0(\spK)$ and $L\in\setLO{\spH}{\spK}$,
    if $0_{\spK}\in \ri (\dom \psi - \ran L)$, then 
    \begin{equation}
      \label{eq:chain-rule}
      \partial (\psi\circ L) = L^* \circ (\partial \psi) \circ L.
    \end{equation}
    \item (Relation between subdifferential and conjugate \cite[Corollary 16.30]{CAaMOTiH}). For $\varphi\in\Gamma_0(\spH)$, the following relation holds:
    \begin{equation}
      \label{eq:subdifferential-and-conjugate}
      (\forall (x,u)\in\spH\times\spH)\quad
      u\in\partial \varphi(x)\iff x\in\partial \varphi^*(u).
    \end{equation}
    \item (Relation between subdifferential and proximity operator \cite[Example 23.3]{CAaMOTiH}).
    For $\varphi\in\Gamma_0(\spH)$, the following holds: 
    \begin{equation}
      \label{eq:subdifferential-and-prox}
      \prox_{\varphi} = (\Id +\partial\varphi)^{-1}.
    \end{equation}
  \end{enumerate}
\end{fact}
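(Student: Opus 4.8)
The five assertions split naturally into three that are essentially definitional (items (a), (d), (e)) and two that carry the analytic content (the sum and chain rules (b) and (c)). The plan is to dispatch the former directly from the definitions together with the Fenchel--Young apparatus, and then to establish the latter by conjugate duality, where the relative-interior qualification conditions do the real work. For \textbf{(a)}, I would simply unwind the definition of $\partial\varphi$ at the candidate point: the statement $0_{\spH}\in\partial\varphi(\bar x)$ reads $(\forall y\in\spH)\ \varphi(\bar x)+\ip{y-\bar x}{0_{\spH}}_{\spH}\leq\varphi(y)$, which is literally $(\forall y\in\spH)\ \varphi(\bar x)\leq\varphi(y)$, i.e. $\bar x$ is a global minimizer. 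No convexity is even required for this equivalence.

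For \textbf{(d)}, the key tool is the Fenchel--Young relation $\varphi(x)+\varphi^*(u)\geq\ip{x}{u}_{\spH}$, valid for all $x,u$ by the definition of $\varphi^*$ as a supremum, together with the fact that equality holds precisely when $u\in\partial\varphi(x)$. I would show $u\in\partial\varphi(x)\iff \varphi(x)+\varphi^*(u)=\ip{x}{u}_{\spH}$; since this equality is symmetric under the exchange $(\varphi,x)\leftrightarrow(\varphi^*,u)$ once biconjugation $\varphi^{**}=\varphi$ (valid for $\varphi\in\Gamma_0(\spH)$) is invoked, it is equivalent to $x\in\partial\varphi^*(u)$. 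For \textbf{(e)}, I would combine (a) with the easy half of the sum rule applied to the Moreau objective. By definition $p=\prox_{\varphi}(x)$ iff $p$ minimizes $v\mapsto\varphi(v)+\tfrac12\norm{v-x}_{\spH}^2$; since the quadratic term is finite and differentiable everywhere, the elementary inclusion $\partial\varphi(p)+(p-x)\subseteq\partial(\varphi+\tfrac12\norm{\cdot-x}_{\spH}^2)(p)$ together with (a) turns optimality into $0_{\spH}\in\partial\varphi(p)+(p-x)$, i.e. $x\in(\Id+\partial\varphi)(p)$, and single-valuedness of $\prox_{\varphi}$ yields $\prox_{\varphi}=(\Id+\partial\varphi)^{-1}$.

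The substantive part is the sum and chain rules, and this is where I expect the main obstacle. For \textbf{(b)}, the inclusion $\partial\varphi_1+\partial\varphi_2\subseteq\partial(\varphi_1+\varphi_2)$ is immediate: adding the two subgradient inequalities at a common point exhibits a subgradient of the sum. The reverse inclusion is the hard direction, and the qualification $0_{\spH}\in\ri(\dom\varphi_1-\dom\varphi_2)$ is exactly what makes it go through: it guarantees that $(\varphi_1+\varphi_2)^*$ equals the infimal convolution of $\varphi_1^*$ and $\varphi_2^*$ and, crucially, that this infimal convolution is \emph{exact} (the infimum is attained). Given $u\in\partial(\varphi_1+\varphi_2)(x)$, I would rewrite this via (d) as a Fenchel--Young equality for the sum, use the attained decomposition $u=u_1+u_2$ supplied by exactness, and read off $u_i\in\partial\varphi_i(x)$ by applying (d) again to each $\varphi_i$. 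The chain rule \textbf{(c)} is entirely analogous: the inclusion $L^*\circ(\partial\psi)\circ L\subseteq\partial(\psi\circ L)$ is elementary, while for the reverse inclusion the qualification $0_{\spK}\in\ri(\dom\psi-\ran L)$ plays the same role, ensuring exactness in the dual representation of $(\psi\circ L)^*$ in terms of $\psi^*$ and the constraint $L^*w=\cdot$; the same Fenchel--Young/exactness argument then recovers a witness $w\in\partial\psi(Lx)$ with $u=L^*w$. The genuine difficulty in both (b) and (c) is establishing the exactness of these infimal representations, which ultimately rests on a separation argument invoking the relative interior; since these are standard results of convex analysis, I would in practice simply cite \cite[Corollaries 16.48 and 16.53]{CAaMOTiH} rather than reproduce the separation argument.
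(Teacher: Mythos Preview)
The paper itself provides no proof of this Fact: it is stated in Appendix~A as a list of standard results, each item carrying only a citation to \cite{CAaMOTiH}. Your sketch therefore goes well beyond what the paper does, and is essentially sound for (a), (d), (b), and (c)---in particular, your identification of the exactness of the infimal convolution (resp.\ of the dual representation of $(\psi\circ L)^*$) as the place where the relative-interior qualification enters is exactly right, and your concluding remark that one would in practice cite the textbook matches the paper's stance.

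There is, however, a genuine logical slip in your argument for (e). You write that the elementary inclusion $\partial\varphi(p)+(p-x)\subseteq\partial\bigl(\varphi+\tfrac12\norm{\cdot-x}_{\spH}^2\bigr)(p)$ ``together with (a) turns optimality into $0_{\spH}\in\partial\varphi(p)+(p-x)$.'' This is backwards: optimality plus Fermat gives $0_{\spH}\in\partial\bigl(\varphi+\tfrac12\norm{\cdot-x}_{\spH}^2\bigr)(p)$, and to conclude $0_{\spH}\in\partial\varphi(p)+(p-x)$ you need the \emph{reverse} inclusion, i.e.\ the hard direction of the sum rule. What rescues you is precisely the observation you already made---that the quadratic term is finite and differentiable everywhere---since in that case the sum rule holds as an equality without any qualification condition (this is the differentiable-summand case, e.g.\ \cite[Proposition~16.6]{CAaMOTiH}). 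Once you invoke that, both directions of $\prox_{\varphi}=(\Id+\partial\varphi)^{-1}$ follow cleanly.
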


\begin{fact}[{\cite[Lemma 1.2.2]{nesterov2013}}]
  \label{fact:hessian-and-lipconst}
  Let $\varphi\in\Gamma_0(\spH)$ be twice continuously differentiable over $\spH$.
  Then $\nabla \varphi$ is $\lipconst{\nabla \varphi}$-Lipschitz continuous over $\spH$ with $\lipconst{\nabla \varphi}\in\setPR$ if and only if $\sup_{x\in\spH}\norm{\nabla^2\varphi(x)}_{\mathrm{op}} \leq \lipconst{\nabla \varphi}$.
\end{fact}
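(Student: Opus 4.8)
The plan is to prove the biconditional by establishing its two implications separately; both reduce to one-dimensional calculus along line segments, using that in the finite-dimensional setting $\nabla^2\varphi$ is the continuous, symmetric, positive semidefinite operator-valued derivative of $\nabla\varphi$ (continuity and symmetry from $\varphi$ being $C^2$, positive semidefiniteness from $\varphi\in\Gamma_0(\spH)$). Write $\beta\coloneqq\lipconst{\nabla\varphi}\in\setPR$.

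For the sufficiency direction, assume $\sup_{x\in\spH}\norm{\nabla^2\varphi(x)}_{\mathrm{op}}\leq\beta$. Fix $x,y\in\spH$ and consider the $C^1$ curve $g:[0,1]\to\spH:t\mapsto\nabla\varphi\bigl(x+t(y-x)\bigr)$, whose derivative is $g'(t)=\nabla^2\varphi\bigl(x+t(y-x)\bigr)(y-x)$ by the chain rule. Applying the fundamental theorem of calculus componentwise to $g$ yields
\begin{equation}
  \nabla\varphi(y)-\nabla\varphi(x)=\int_0^1\nabla^2\varphi\bigl(x+t(y-x)\bigr)(y-x)\,\mathrm{d}t .
\end{equation}
Bounding the norm of the integrand by $\norm{\nabla^2\varphi(x+t(y-x))}_{\mathrm{op}}\norm{y-x}_\spH\leq\beta\norm{y-x}_\spH$ and integrating gives $\norm{\nabla\varphi(y)-\nabla\varphi(x)}_\spH\leq\beta\norm{y-x}_\spH$, so $\nabla\varphi$ is $\beta$-Lipschitz over $\spH$.

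For the necessity direction, assume $\nabla\varphi$ is $\beta$-Lipschitz over $\spH$. Fix $x\in\spH$ and a unit vector $d\in\spH$. By the definition of the second derivative, $\nabla^2\varphi(x)d=\lim_{t\to0}t^{-1}\bigl(\nabla\varphi(x+td)-\nabla\varphi(x)\bigr)$, hence
\begin{equation}
  \norm{\nabla^2\varphi(x)d}_\spH=\lim_{t\to0}\frac{\norm{\nabla\varphi(x+td)-\nabla\varphi(x)}_\spH}{|t|}\leq\lim_{t\to0}\frac{\beta\,|t|\,\norm{d}_\spH}{|t|}=\beta .
\end{equation}
Taking the supremum over unit $d\in\spH$ gives $\norm{\nabla^2\varphi(x)}_{\mathrm{op}}\leq\beta$, and since $x\in\spH$ is arbitrary, $\sup_{x\in\spH}\norm{\nabla^2\varphi(x)}_{\mathrm{op}}\leq\beta<\infty$.

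The argument is routine, so there is no genuine obstacle; the closest things to a subtlety are (i) the passage from the pointwise derivative $g'$ to the Newton--Leibniz representation of $\nabla\varphi(y)-\nabla\varphi(x)$ for the Hilbert-space-valued $C^1$ curve $g$, together with the interchange of norm and integral, and (ii) being explicit that $\norm{\nabla^2\varphi(x)}_{\mathrm{op}}$ is realized as the supremum over unit directions, which is precisely what makes the directional bound in the necessity part conclusive. The hypothesis $\varphi\in\Gamma_0(\spH)$ is not used beyond guaranteeing $\nabla^2\varphi(x)\succeq\zeroMatrix_\spH$ and is inessential to the equivalence.
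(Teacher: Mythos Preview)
Your proof is correct and entirely standard. The paper does not supply its own proof of this statement; it records it as a known fact with a citation to \cite[Lemma~1.2.2]{nesterov2013}, so there is nothing further to compare.
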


\begin{fact}[Properties of generalized Hessian]
  \label{fact:generalized-hessian}
  Let $\varphi:\spH\to\setR$ be differentiable over $\spH$ with Lipschitz continuous gradient $\nabla \varphi$. Let $\mathfrak{D}$ be the set of all points where $\varphi$ is twice differentiable, which is a dense subset of $\spH$ (see \cite[Theorem 9.60]{rockafellar2009variational}). 
  Define the \textit{generalized Hessian} of $\varphi$ at $x\in\spH$ in the sense of \cite{aros2021} by
\begin{equation}
  \label{eq:def-g-hess}
  \overline{\nabla}^2 \varphi(x) \coloneqq \{M\in\setLO{\spH}{\spH} \mid \exists (x_n)_{n\in\setN_0}\subset \mathfrak{D} \mbox{ s.t. } \lim_{n\to +\infty} x_n = x \mbox{ and } \lim_{n\to +\infty} \nabla^2 \varphi (x_n) = M \}.
\end{equation}
  Then the following hold.
  \quad
  \begin{enumerate}[(a)]
    \item (\cite[Thorem 13.52]{rockafellar2009variational}). For every $x\in\spH$, $\overline{\nabla}^2 \varphi(x)$ is a nonempty compact set of symmetric matrices. 
    \item (\cite[Proposition 2.2(i)]{aros2021}). $\varphi$ is convex if and only if, for every $x\in\spH$ and $M\in\overline{\nabla}^2\varphi(x)$, $M\succeq \zeroMatrix_{\spH}$ holds.
    \item For $\alpha\in\setR$ and $x\in\spH$, 
    \begin{equation}
      \label{eq:gene-hessian-scalar}
      \overline{\nabla}^2 (\alpha\varphi)(x)
      = \alpha \overline{\nabla}^2 (\varphi)(x).
    \end{equation}
    \item Let $\psi:\spH\to\setR$ be twice continuously differentiable. Then, for $x\in\spH$,
    \begin{equation}
      \label{eq:g-hess-sum-rule}
      \overline{\nabla}^2(\varphi+\psi)
      (x) = \overline{\nabla}^2\varphi(x) + \nabla^2\psi(x)
      (=\{M+\nabla^2\psi(x)\in\setLO{\spH}{\spH} \mid M\in  \overline{\nabla}^2\varphi(x)\}).
    \end{equation}
    \item $\nabla\varphi$ is $\lipconst{\nabla\varphi}$-Lipschitz continuous if and only if the following inequality holds:
    \begin{equation}
      (\forall x\in\spH)(\forall M\in\overline{\nabla}^2\varphi(x)) \quad \norm{M}_{\mathrm{op}}\leq \lipconst{\nabla\varphi}.
    \end{equation}
  \end{enumerate}
\end{fact}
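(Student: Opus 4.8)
The plan is to treat items (a) and (b) as quoted directly from \cite{rockafellar2009variational,aros2021}, and to supply proofs only for the three computational items (c), (d) and (e). All three follow by manipulating the defining sequences in \eqref{eq:def-g-hess}: a matrix $M$ lies in $\overline{\nabla}^2\varphi(x)$ exactly when some $(x_n)_{n\in\setN_0}\subset\mathfrak{D}$ satisfies $x_n\to x$ and $\nabla^2\varphi(x_n)\to M$. The observations that make (c) and (d) immediate are that neither scaling by $\alpha$ nor adding a twice continuously differentiable $\psi$ alters the twice-differentiability set $\mathfrak{D}$, while each transforms the classical Hessian in a controlled way.

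For (c), note that for $\alpha\neq0$ the function $\alpha\varphi$ is twice differentiable at exactly the points of $\mathfrak{D}$, with $\nabla^2(\alpha\varphi)(x_n)=\alpha\nabla^2\varphi(x_n)$; hence $\nabla^2(\alpha\varphi)(x_n)\to M$ is equivalent to $\nabla^2\varphi(x_n)\to M/\alpha$, which gives $M\in\overline{\nabla}^2(\alpha\varphi)(x)\iff M/\alpha\in\overline{\nabla}^2\varphi(x)$, i.e. \eqref{eq:gene-hessian-scalar}. The case $\alpha=0$ is handled separately: the zero function is twice continuously differentiable with vanishing Hessian, so the left side is $\{\zeroMatrix_{\spH}\}$, while the right side is $\{\zeroMatrix_{\spH}\}$ because $\overline{\nabla}^2\varphi(x)\neq\emptyset$ by (a). For (d), since $\psi$ is twice continuously differentiable everywhere, $\varphi+\psi$ is twice differentiable at a point if and only if $\varphi$ is, and there $\nabla^2(\varphi+\psi)=\nabla^2\varphi+\nabla^2\psi$. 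Taking $x_n\to x$ in $\mathfrak{D}$ and using the continuity of $\nabla^2\psi$ to pass $\nabla^2\psi(x_n)\to\nabla^2\psi(x)$, one obtains $\nabla^2(\varphi+\psi)(x_n)\to M\iff\nabla^2\varphi(x_n)\to M-\nabla^2\psi(x)$, which is exactly \eqref{eq:g-hess-sum-rule}.

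Item (e) is the substantive one. For the forward direction, suppose $\nabla\varphi$ is $\lipconst{\nabla\varphi}$-Lipschitz. At any $x_n\in\mathfrak{D}$ the operator $\nabla^2\varphi(x_n)$ is the Fréchet derivative of the Lipschitz map $\nabla\varphi$ at $x_n$, so for every direction $v$ one has $\norm{\nabla^2\varphi(x_n)v}_{\spH}=\lim_{t\to0}\norm{t^{-1}(\nabla\varphi(x_n+tv)-\nabla\varphi(x_n))}_{\spH}\le\lipconst{\nabla\varphi}\norm{v}_{\spH}$, giving $\norm{\nabla^2\varphi(x_n)}_{\mathrm{op}}\le\lipconst{\nabla\varphi}$; continuity of $\norm{\cdot}_{\mathrm{op}}$ then passes this bound to every limit $M\in\overline{\nabla}^2\varphi(x)$. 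For the converse, which is the main obstacle, Fact \ref{fact:hessian-and-lipconst} cannot be invoked directly because $\varphi$ is only $C^{1,1}$, not $C^2$, so I would pass to a mollification. By Rademacher's theorem $\nabla\varphi$ is differentiable almost everywhere, so $\mathfrak{D}$ has full Lebesgue measure; at each $z\in\mathfrak{D}$ the constant sequence shows $\nabla^2\varphi(z)\in\overline{\nabla}^2\varphi(z)$, whence the hypothesis gives $\norm{\nabla^2\varphi(z)}_{\mathrm{op}}\le\lipconst{\nabla\varphi}$ for a.e. $z$. Fixing a smooth mollifier $\rho_\varepsilon$ and setting $\varphi_\varepsilon\coloneqq\varphi\ast\rho_\varepsilon$, the function $\varphi_\varepsilon$ is $C^\infty$ with $\nabla^2\varphi_\varepsilon(x)=\int\nabla^2\varphi(x-y)\rho_\varepsilon(y)\,dy$, so $\norm{\nabla^2\varphi_\varepsilon(x)}_{\mathrm{op}}\le\int\norm{\nabla^2\varphi(x-y)}_{\mathrm{op}}\rho_\varepsilon(y)\,dy\le\lipconst{\nabla\varphi}$ for every $x$. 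Now Fact \ref{fact:hessian-and-lipconst} applies to the smooth $\varphi_\varepsilon$ and yields that $\nabla\varphi_\varepsilon$ is $\lipconst{\nabla\varphi}$-Lipschitz; letting $\varepsilon\to0$ and using $\nabla\varphi_\varepsilon\to\nabla\varphi$ uniformly on compact sets preserves the Lipschitz constant, establishing the claim.

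I expect the delicate points to be justifying the a.e.\ Hessian bound and the convolution identity $\nabla^2\varphi_\varepsilon=(\nabla^2\varphi)\ast\rho_\varepsilon$ for the merely $C^{1,1}$ function $\varphi$ (where $\nabla^2\varphi$ is the essentially bounded distributional Hessian), together with the stability of the Lipschitz constant under mollification and under the limit $\varepsilon\to0$. An alternative route avoiding convolution would identify $\operatorname{conv}\overline{\nabla}^2\varphi(x)$ with the Clarke generalized Jacobian of $\nabla\varphi$, extend the operator-norm bound to the convex hull (the operator-norm ball being convex), and conclude via Lebourg's mean-value inclusion for the Lipschitz map $\nabla\varphi$.
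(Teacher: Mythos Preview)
Your treatments of (c) and (d) coincide with the paper's: both arguments rest on the observation that the twice-differentiability set $\mathfrak{D}$ is unchanged under scaling or under adding a $C^2$ function, and then pass to limits along defining sequences. The $\alpha=0$ case in (c) is handled the same way (the paper simply says it ``holds clearly'').

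For (e) the approaches diverge. The paper does not argue from scratch: it writes only that (e) is ``verified by combining (c) and \cite[Proposition~2.2(ii)]{aros2021}.'' In other words, the paper defers the substantive direction entirely to the cited reference and uses the scalar rule (c) only to reconcile the form of the bound there with the operator-norm bound stated here. Your proof is genuinely different in that it is self-contained: the forward direction via the difference-quotient bound on $\nabla^2\varphi(x_n)$ is standard and correct, and for the converse you supply a full mollification argument (with the Clarke--Lebourg mean-value route as an alternative). Both of your routes are valid; the delicate points you flag (that $\nabla^2\varphi$ is the essentially bounded distributional Hessian of a $C^{1,1}$ function, so $\nabla^2\varphi_\varepsilon=(\nabla^2\varphi)\ast\rho_\varepsilon$ holds, and that Lipschitz constants survive the limit $\varepsilon\to0$) are genuine but routine. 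What your approach buys is independence from \cite{aros2021}; what the paper's approach buys is brevity, since Proposition~2.2(ii) there already packages exactly this characterization.
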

\begin{proof}
  We provide a simple proof of (c-e)
  below for self-containedness. Fix $x\in\spH$ arbitrarily.

  \noindent
  \textbf{(Proof of (c))} If $ \alpha =0$, \eqref{eq:gene-hessian-scalar} holds clearly.
  Assume $\alpha\neq0$. Then \eqref{eq:gene-hessian-scalar} is verified by
  \begin{equation}
    M\in \alpha \overline{\nabla}^2(\varphi)(x)
    \iff "(\exists (x_n)_{n\in\setN_0}\subset \mathfrak{D})\ \lim_{n\to +\infty} x_n =x \mbox{ and } \lim_{n\to \infty}\nabla^2(\alpha\varphi)(x_n)= M"
    \iff M \in\overline{\nabla}^2(\alpha\varphi)(x).
  \end{equation}

  \noindent
  \textbf{(Proof of (d))}
  Since $\mathfrak{D}$ is also the set of all points where
  $\varphi + \psi$ is twice differentiable, the equality \eqref{eq:g-hess-sum-rule} follows from
  \begin{align}
    \label{eq:proof-g-hess-sum-1}
    \hspace{-1em} %
    M \in \overline{\nabla}^2(\varphi+\psi)(x)
    \iff  &\exists (x_n)_{n\in\setN_0}\subset \mathfrak{D} \mbox{ s.t. } \begin{cases}
    \lim_{n\to +\infty} x_n = x\\
    \lim_{n\to +\infty}\nabla^2 (\varphi+\psi) (x_n) = M
    \end{cases}\\
     \label{eq:proof-g-hess-sum-2}
     \iff &\exists (x_n)_{n\in\setN_0}\subset \mathfrak{D} \mbox{ s.t. } \begin{cases}
    \lim_{n\to +\infty} x_n = x\\
    \lim_{n\to +\infty}\nabla^2\varphi (x_n) + \nabla^2\psi (x) = M
    \end{cases}\\
    \label{eq:proof-g-hess-sum-3}
    \iff &  M -\nabla^2\psi(x) \in \overline{\nabla}^2(\varphi)(x) 
    \iff  M  \in \overline{\nabla}^2(\varphi)(x)+\nabla^2\psi(x),
  \end{align}
  where we used the twice continuous differentiability of $\psi$ to derive \eqref{eq:proof-g-hess-sum-2}.

  \noindent
  \textbf{(Proof of (e))} Verified by combining (c) and \cite[Proposition 2.2(ii)]{aros2021}.
\end{proof}

\setcounter{appnum}{2}
\setcounter{lemma}{0}
\setcounter{fact}{0}
\section{Proof of Proposition \ref{prop:overall-convexity}}
\label{sec:a1}

\noindent
\textbf{(Proof of (a))} 
From the definition of $\Psi_B$ in \eqref{eq:GME}, we observe, for every $x\in\spX$,
\begin{align}
  \Psi_B\circ \opL(x) &= \Psi\circ \opL(x) - \min_{v\in\spZ} \left[
    \Psi(v) + \frac{1}{2}\norm{B(\opL x - v)}^2_{\sptildeZ}
  \right]\\
  &=\Psi\circ \opL(x) - \min_{v\in\spZ} \left[
    \Psi(v) +  \frac{1}{2}\norm{B\opL x}^2_{\sptildeZ}-\ip{B^*B\opL x}{v}_{\spZ} + \frac{1}{2}\norm{B v}^2_{\sptildeZ}
  \right]\\
  &=\Psi\circ \opL (x)-  \frac{1}{2}\norm{B\opL x}^2_{\sptildeZ} - \min_{v\in\spZ} \left[
    \Psi(v) - \ip{B^*B\opL x}{v}_{\spZ} + \frac{1}{2}\norm{B v}^2_{\sptildeZ}
  \right]\\
  &= \Psi\circ \opL (x) -  \frac{1}{2}\norm{B\opL x}^2_{\sptildeZ} + \max_{v\in\spZ} \left[ \ip{B^*B\opL x}{v}_{\spZ}
  - \Psi(v) - \frac{1}{2}\norm{B v}^2_{\sptildeZ}
\right]\\
&=   \Psi\circ\opL (x) - \frac{1}{2}\norm{B\opL x}^2_{\sptildeZ}
+\left(\Psi + \frac{1}{2}\norm{B \cdot}^2_{\sptildeZ}\right)^*\circ B^*B\opL (x),
\end{align}
which verifies the expression in \eqref{eq:LiGME-reg-decompose}. 
For the third term in \eqref{eq:LiGME-reg-decompose},
$\left(\Psi + \frac{1}{2}\norm{B \cdot}^2_{\sptildeZ}\right)^*\in\Gamma_0(\spZ)$ is verified by \cite[Corollary 13.38]{CAaMOTiH} with $\Psi + \frac{1}{2}\norm{B \cdot}^2_{\sptildeZ}\in\Gamma_0(\spZ)$.
Moreover, for every $x\in\spX$, we have
\begin{align}
  &\left(\Psi + \frac{1}{2}\norm{B \cdot}^2_{\sptildeZ}\right)^*\circ B^*B\opL(x)
  = \sup_{v\in\spZ}\left[
  \ip{B^*B\opL(x)}{v}_{\spZ}- \Psi(v) - \frac{1}{2}\norm{Bv}^2_{\sptildeZ}\right]\\
  &\leq \sup_{v_1\in\spZ} [-\Psi(v_1)] + \sup_{v_2\in\spZ}\left[
  \ip{B\opL(x)}{B v_2}_{\spZ} - \frac{1}{2}\norm{Bv_2}^2_{\sptildeZ}\right]\\
  &= -\inf_{v_1\in\spZ} [ \Psi(v_1)] - \inf_{\widetilde{v}_2\in\ran B }\left[
  -\ip{B\opL(x)}{\widetilde{v}_2}_{\sptildeZ} + \frac{1}{2}\norm{\widetilde{v}_2}^2_{\sptildeZ}\right] <\infty,
\end{align}
where the last inequality follows from Fact \ref{fact:existence-minimizer}(a) with the coercivities of $\Psi\in\Gamma_0(\spZ)$ and $\left(-\ip{B\opL(x)}{\cdot}_{\sptildeZ} + \frac{1}{2}\norm{\cdot}^2_{\sptildeZ}\right)\in\Gamma_0(\sptildeZ)$.
Therefore, \eqref{eq:dom-of-conjugate} holds, 
and $\left(\Psi + \frac{1}{2}\norm{B \cdot}^2_{\sptildeZ}\right)^*\circ B^*B\opL \in\Gamma_0(\spX)$ is verified by \cite[Proposition 9.5]{CAaMOTiH} with $\left(\Psi + \frac{1}{2}\norm{B \cdot}^2_{\sptildeZ}\right)^*\in\Gamma_0(\spZ)$ and \eqref{eq:dom-of-conjugate}.
Combining \eqref{eq:LiGME-reg-decompose}, \eqref{eq:dom-of-conjugate} and $\dom (-\frac{1}{2}\norm{B\opL (\cdot)}^2_{\sptildeZ})=\spX$, we have \eqref{eq:dom-LiGME-reg}. Since every function in RHS of \eqref{eq:LiGME-reg-decompose} is lower semicontinuous, $\Psi_B\circ \opL$ is also lower semicontinuous by \cite[Lemma 1.27]{CAaMOTiH}.
Moreover, $\Psi_B\circ \opL$ is $1$-weakly convex relative to $q_{\opL^*B^*B\opL}$ because \eqref{eq:LiGME-reg-decompose} implies that $
  \Psi_B\circ \opL + q_{\opL^*B^*B\opL}
  = \Psi\circ\opL
  +\left(\Psi + \frac{1}{2}\norm{B \cdot}^2_{\sptildeZ}\right)^*\circ B^*B\opL
  \in\Gamma_0(\spX)
$ is convex.

{
\sloppy \noindent
\textbf{(Proof of (b))} 
From \eqref{eq:cLiGME-w-general-fidelity}, \eqref{eq:dom-LiGME-reg} and $\dom f = \spY$, we have 
\begin{equation}
  \dom (J_{\Psi_B\circ \opL}) = \dom(f\circ A)\cap\dom(\Psi_B\circ \opL)=\dom(\Psi_B\circ \opL)= \dom(\Psi\circ \opL).
\end{equation}
Moreover, since every term in RHS of \eqref{eq:cLiGME-w-general-fidelity} is lower semicontinuous, the lower semicontinuity of $J_{\Psi_B \circ \opL}$ is verified by \cite[Lemma 1.27]{CAaMOTiH}.

\noindent
\textbf{(Proof of (c))} 
$(C_0)\implies(C_1)$: Under the condition $(C_0)$, $f\circ A$ is $1$-strongly convex relative to $q_{A^*\Lambda A}$ by Proposition \ref{prop:properties-strong-convexity}(b). Therefore,
$(C_1)$ is verified by Proposition \ref{prop:properties-strong-convexity}(a).
$(C_1)\implies(C_2)$: $f\circ A$ is $\mu$-strongly convex relative to $q_{\opL^*B^*B\opL}$ from $(C_1)$, and $\mu\Psi_B\circ \opL$ is $\mu$-weakly convex relative to $q_{\opL^*B^*B\opL}$ by Proposition \ref{prop:overall-convexity}(a).
Therefore, $J_{\Psi_B\circ \opL}\in\Gamma_0(\spX)$ is verified by Proposition \ref{prop:properties-strong-convexity}(a) with $\mu\opL^*B^*B\opL - \mu\opL^*B^*B\opL = \zeroMatrix_{\spX}$.

\noindent
\textbf{(Proof of (d))} 
The equality \eqref{eq:LiGME-reg-decompose} yields the expression \eqref{eq:J-sum-of-convex}, where $\mathfrak{d}\in\Gamma_0(\spX)$ is assumed in $(C_1)$.
From $\Psi\in\Gamma_0(\spZ)$, $\mu\Psi\circ\opL\in\Gamma_0(\spX)$ is verified by \cite[Proposition 9.5]{CAaMOTiH} and $\dom(\Psi\circ\opL)\neq \emptyset$ in the assumption~\eqref{eq:assumption-proper}.
Finally, $\mu\left(\Psi + \frac{1}{2}\norm{B \cdot}^2_{\sptildeZ}\right)^*\circ B^*B\opL\in\Gamma_0(\spX)$ is verified by (a).
}

\setcounter{appnum}{3}
\setcounter{lemma}{0}
\setcounter{fact}{0}
\section{Proof of Proposition \ref{prop:coercivity-J}}
\label{appendix:coercivity-J}

We use the following lemmas for proof.
\begin{lemma}
  \label{prop:null-BL-cond}
  In Problem \ref{prob:cLiGME-w-general-fidelity}, the  following relation holds:
  \begin{equation}
    \label{eq:BL-null-relation}
    (C_1):\mathfrak{d}\overset{\eqref{eq:def-d}}{\coloneqq} f\circ A - \frac{\mu}{2}\norm{B\opL \cdot}_{\sptildeZ}^2\in\Gamma_0(\spX)\implies
    \dom(\rec(f\circ A)) \subset \nullsp(B\opL).
  \end{equation}
\end{lemma}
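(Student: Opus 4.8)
The plan is to write $f\circ A$ under $(C_1)$ as the sum of the proper lower semicontinuous convex function $\mathfrak{d}$ and the finite-valued convex quadratic $g\coloneqq\frac{\mu}{2}\norm{B\opL\cdot}^2_{\sptildeZ}$, and then to compare recession functions term by term. First I would record the elementary facts: $g\in\Gamma_0(\spX)$ with $\dom g=\spX$, since $g$ is a continuous nonnegative convex function; and, under $(C_1)$, $f\circ A=\mathfrak{d}+g\in\Gamma_0(\spX)$ by \cite[Corollary 9.4]{CAaMOTiH} together with $\dom(f\circ A)=\dom\mathfrak{d}\neq\emptyset$. Next I would compute $\rec(g)$ directly from the limit formula in \eqref{eq:properties-recession} with base point $u=0_{\spX}\in\dom g$: for every $x\in\spX$,
\[
  \rec(g)(x)=\lim_{\alpha\to+\infty}\frac{1}{\alpha}\cdot\frac{\mu}{2}\norm{\alpha B\opL x}^2_{\sptildeZ}=\lim_{\alpha\to+\infty}\frac{\mu\alpha}{2}\norm{B\opL x}^2_{\sptildeZ},
\]
which is $0$ if $B\opL x=0_{\sptildeZ}$ and $+\infty$ otherwise; hence $\rec(g)=\iota_{\nullsp(B\opL)}$ and $\dom(\rec(g))=\nullsp(B\opL)$.

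The core step is the recession sum identity $\rec(f\circ A)=\rec(\mathfrak{d})+\rec(g)$, which I would prove pointwise. Fix $u\in\dom\mathfrak{d}=\dom(f\circ A)$; for each $\alpha>0$ and each $x\in\spX$,
\[
  \frac{1}{\alpha}(f\circ A)(u+\alpha x)=\frac{1}{\alpha}\mathfrak{d}(u+\alpha x)+\frac{1}{\alpha}g(u+\alpha x).
\]
Letting $\alpha\to+\infty$ and applying the limit formula \eqref{eq:properties-recession} to each of $f\circ A$, $\mathfrak{d}$, $g\in\Gamma_0(\spX)$ with the common base point $u$, the three terms converge in $(-\infty,+\infty]$ to $\rec(f\circ A)(x)$, $\rec(\mathfrak{d})(x)$, $\rec(g)(x)$, respectively. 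Because $\rec(\mathfrak{d})$ is proper (it belongs to $\Gamma_0(\spX)$), $\rec(\mathfrak{d})(x)>-\infty$, so the sum $\rec(\mathfrak{d})(x)+\rec(g)(x)$ is well defined in $(-\infty,+\infty]$ and equals the limit of the sum (if either summand is $+\infty$, the other is bounded below for large $\alpha$, so the sum tends to $+\infty$). This yields the identity.

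Finally, let $x\in\dom(\rec(f\circ A))$, i.e.\ $\rec(f\circ A)(x)<+\infty$. From the identity and $\rec(\mathfrak{d})(x)>-\infty$ we must have $\rec(g)(x)<+\infty$, for otherwise the right-hand side would be $+\infty$; and $\rec(g)=\iota_{\nullsp(B\opL)}$ then forces $x\in\nullsp(B\opL)$. This establishes $\dom(\rec(f\circ A))\subset\nullsp(B\opL)$. I expect the only delicate point to be the bookkeeping of $+\infty$ values in the recession sum identity; one could instead quote a known sum rule for recession functions, but deriving the identity from \eqref{eq:properties-recession} keeps the argument self-contained, and everything else is a routine computation.
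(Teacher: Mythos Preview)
Your proof is correct and essentially the same as the paper's: both arguments compute the recession of the quadratic $g=\frac{\mu}{2}\norm{B\opL\cdot}^2_{\sptildeZ}$ via the limit formula \eqref{eq:properties-recession}, and both use that $\rec(\mathfrak{d})$ is proper because $\mathfrak{d}\in\Gamma_0(\spX)$ under $(C_1)$. The only cosmetic difference is that the paper decomposes $\mathfrak{d}=f\circ A-g$ and derives the contradiction $\rec(\mathfrak{d})(x)=-\infty$ directly, whereas you decompose $f\circ A=\mathfrak{d}+g$ and package the same computation as the sum identity $\rec(f\circ A)=\rec(\mathfrak{d})+\iota_{\nullsp(B\opL)}$.
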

\begin{proof}
The proof is by contradiction.
Assume that $(C_1)$ in \eqref{eq:convexity-cond} holds and there exists $x\notin\nullsp(B\opL)$ such that $\rec(f\circ A)(x)<\infty$, i.e., $\rec(f\circ A)(x)\in \setR$.
By \cite[Proposition 9.30(i)]{CAaMOTiH} with $\mathfrak{d}\in\Gamma_0(\spX)$, we have $\rec(\mathfrak{d})\in\Gamma_0(\spX)$.
By $\rec(f\circ A)(x)\in \setR$, we also have for every $u\in\dom(\mathfrak{d})=\dom(f\circ A)= \spX$
\begin{align}
  \label{eq:rec-dx-contradiction}
  \rec (\mathfrak{d})(x)&\overset{\eqref{eq:properties-recession}}{=}
  \lim_{\alpha\to +\infty}\frac{\mathfrak{d}(u + \alpha x)}{\alpha}
    \\
    &\overset{\hphantom{\eqref{eq:properties-recession}}}{=} \lim_{\alpha\to +\infty}
    \frac{\left(f\circ A -\frac{\mu}{2}\norm{B\opL\cdot}^2_{\sptildeZ}\right)(u + \alpha x)}{\alpha}\\
    &\overset{\hphantom{\eqref{eq:properties-recession}}}{=} \lim_{\alpha\to +\infty}
    \left(
      \frac{f\circ A (u+\alpha x)}{\alpha}
      -\frac{\mu}{2\alpha}\norm{B\opL(u + \alpha x)}^2_{\sptildeZ}
    \right)\\
    \label{eq:proof-rec-dx-cont}
    &\overset{\eqref{eq:properties-recession}}{=}\rec (f\circ A)(x) - \lim_{\alpha\to +\infty}\frac{\mu}{2\alpha}\norm{B\opL(u + \alpha x)}^2_{\sptildeZ}\\
    &\overset{\hphantom{\eqref{eq:properties-recession}}}{=} \rec (f\circ A)(x) - \lim_{\alpha\to +\infty}\left[\frac{\mu}{2 \alpha} \norm{B\opL u}^2_{\sptildeZ} + \mu\ip{B\opL u}{B\opL x}_{\sptildeZ}
    + \frac{\mu \alpha}{2} \norm{B\opL x}_{\sptildeZ}^2
    \right]\\
    \label{eq:contradiction-neg-inf}
    &\overset{\hphantom{\eqref{eq:properties-recession}}}{=} -\infty,
\end{align}
where we used $B\opL x\neq 0_{\sptildeZ}$ for \eqref{eq:contradiction-neg-inf}.
However, \eqref{eq:contradiction-neg-inf} contradicts $\rec(\mathfrak{d})\in\Gamma_0(\spX)$.
Thus, we have the implication in \eqref{eq:BL-null-relation} by contradiction.
\end{proof}

\begin{lemma}
  Consider Problem \ref{prob:cLiGME-w-general-fidelity} under $(C_1)$ in \eqref{eq:convexity-cond} and $\inf_{x\in\spX} f\circ A (x) >-\infty$.
  Then 
  \begin{equation}
    \label{eq:rec-J-geq-fA}
    (\forall x\in\spX)\quad  
    \rec (J_{\Psi_B\circ \opL})(x)\geq \rec (f\circ A)(x) \geq 0
  \end{equation}
  holds.
  In particular, if $\rec (f\circ A)(x)=0$, then we have
  \begin{equation}
    \label{eq:rec-J-eq-PsiL}
    \rec (J_{\Psi_B\circ \opL})(x) = \mu \rec (\Psi\circ \opL)(x).
  \end{equation}
\end{lemma}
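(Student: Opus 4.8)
The plan is to reduce everything to the limit characterization $\rec(\varphi)(x)=\lim_{\alpha\to+\infty}\varphi(u+\alpha x)/\alpha$ of \cite[Proposition 9.30(iii)]{CAaMOTiH}, evaluated at one fixed base point $u\in\dom(\Psi\circ\opL)$, which is nonempty by \eqref{eq:assumption-proper} and, by Proposition \ref{prop:overall-convexity}(b), coincides with $\dom J_{\Psi_B\circ\opL}$; moreover $u\in\dom(f\circ A)=\spX$ since $\dom f=\spY$. I would lean on two structural facts. First, the generalized Moreau enhanced function is pointwise nonnegative, $\Psi_B\ge 0$: plugging $v=z$ into the minimization in \eqref{eq:GME} shows $\min_v[\Psi(v)+\tfrac12\norm{B(z-v)}^2_{\sptildeZ}]\le\Psi(z)$ when $\Psi(z)<\infty$, and when $\Psi(z)=\infty$ that minimum is still finite (coercivity of $\Psi$), so $\Psi_B(z)=\infty$; in both cases $\Psi_B(z)\ge 0$, hence $\mu\Psi_B\circ\opL\ge 0$. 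Second, under $(C_1)$ the cost decomposes as in \eqref{eq:J-sum-of-convex}, $J_{\Psi_B\circ\opL}=\mathfrak{d}+\mu\Psi\circ\opL+\mu g_0$ with $g_0\coloneqq(\Psi+\tfrac12\norm{B\cdot}^2_{\sptildeZ})^*\circ B^*B\opL$, where $\mathfrak{d}\in\Gamma_0(\spX)$ and $g_0\in\Gamma_0(\spX)$ are finite on all of $\spX$ ($\dom\mathfrak{d}=\spX$ trivially, $\dom g_0=\spX$ by \eqref{eq:dom-of-conjugate}).

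To prove \eqref{eq:rec-J-geq-fA}, I would first use that $f\circ A$ is bounded below, say by $m\in\setR$: then $f\circ A(u+\alpha x)/\alpha\ge m/\alpha\to 0$, so $\rec(f\circ A)(x)\ge 0$. Next, from $J_{\Psi_B\circ\opL}=f\circ A+\mu\Psi_B\circ\opL\ge f\circ A$ on $\spX$, dividing by $\alpha>0$ and letting $\alpha\to+\infty$ (both limits existing by \cite[Proposition 9.30(iii)]{CAaMOTiH}) yields $\rec(J_{\Psi_B\circ\opL})(x)\ge\rec(f\circ A)(x)$. This argument automatically covers the degenerate case $\rec(f\circ A)(x)=+\infty$ (which, by Lemma \ref{prop:null-BL-cond}, happens exactly for $x\notin\nullsp(B\opL)$), since then the inequality forces $\rec(J_{\Psi_B\circ\opL})(x)=+\infty$.

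For \eqref{eq:rec-J-eq-PsiL}, I would assume $\rec(f\circ A)(x)=0$. Being finite, Lemma \ref{prop:null-BL-cond} forces $x\in\nullsp(B\opL)$, so $B\opL x=0_{\sptildeZ}$ and hence $B^*B\opL x=0_{\spZ}$. Evaluating the decomposition \eqref{eq:J-sum-of-convex} along the ray $u+\alpha x$, the quadratic part of $\mathfrak{d}$ freezes, $\norm{B\opL(u+\alpha x)}^2_{\sptildeZ}=\norm{B\opL u}^2_{\sptildeZ}$, and $g_0(u+\alpha x)=(\Psi+\tfrac12\norm{B\cdot}^2_{\sptildeZ})^*(B^*B\opL u)$ is a constant $c_0\in\setR$ (finite since $\dom g_0=\spX$). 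Hence
\[
  \frac{J_{\Psi_B\circ\opL}(u+\alpha x)}{\alpha}=\frac{f\circ A(u+\alpha x)}{\alpha}-\frac{\mu}{2\alpha}\norm{B\opL u}^2_{\sptildeZ}+\mu\,\frac{\Psi\circ\opL(u+\alpha x)}{\alpha}+\frac{\mu c_0}{\alpha},
\]
and I would let $\alpha\to+\infty$: the first term tends to $\rec(f\circ A)(x)=0$, the second and fourth to $0$, and the third to $\mu\rec(\Psi\circ\opL)(x)$, giving $\rec(J_{\Psi_B\circ\opL})(x)=\mu\rec(\Psi\circ\opL)(x)$.

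I expect the only real obstacle to be the extended-real bookkeeping around $\rec$: keeping a single base point $u$ inside the effective domains of $J_{\Psi_B\circ\opL}$, $f\circ A$, $\mathfrak{d}$, $g_0$ and $\Psi\circ\opL$ so that \cite[Proposition 9.30(iii)]{CAaMOTiH} applies uniformly, and checking that no $\infty-\infty$ occurs in the last display — the only term that can diverge is $\mu\,\Psi\circ\opL(u+\alpha x)/\alpha$, whose limit $\mu\rec(\Psi\circ\opL)(x)=\mu\,\rec(\Psi)(\opL x)\in[0,+\infty]$ is nonnegative by coercivity of $\Psi$, so the limit of the sum is the sum of the limits. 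Plugging $u+\alpha x$ into the decomposition before dividing, rather than summing recession functions term by term, is the device that sidesteps needing a recession-of-a-sum rule.
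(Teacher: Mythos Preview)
Your proposal is correct and follows essentially the same route as the paper: both establish $\Psi_B\circ\opL\ge 0$ to get $J_{\Psi_B\circ\opL}\ge f\circ A$ for \eqref{eq:rec-J-geq-fA}, and for \eqref{eq:rec-J-eq-PsiL} both invoke Lemma~\ref{prop:null-BL-cond} to obtain $B\opL x=0$ and then observe that the non-$\Psi\circ\opL$ contributions are constant along the ray $u+\alpha x$. The only cosmetic difference is that you freeze those contributions via the conjugate decomposition \eqref{eq:J-sum-of-convex}, whereas the paper re-expands $\Psi_B\circ\opL$ directly from its defining minimization \eqref{eq:GME}; the bookkeeping and conclusions are identical.
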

\begin{proof}
  Since $\dom (\Psi\circ \opL) = \dom(\Psi_B\circ \opL)\neq \emptyset$ by \eqref{eq:dom-LiGME-reg} and
  \begin{align}
    \Psi_B \circ \opL(x)
    &=\Psi \circ \opL(x)
    - \min_{v\in\spZ} \left[
      \Psi(v) + \frac{1}{2}\norm{B(\opL x - v)}^2_{\sptildeZ}
    \right]
    \geq\Psi \circ \opL(x)
    - \left[
      \Psi(\opL x) + \frac{1}{2}\norm{B(\opL x - \opL x)}^2_{\sptildeZ}
    \right]\\
    &= \Psi \circ \opL(x) - \Psi \circ \opL(x) =0
  \end{align}
  holds for every $ x\in\dom (\Psi\circ \opL) $, we have 
$
  (\forall x\in\spX)\ J_{\Psi_B\circ\opL}(x)= f\circ A(x) + \mu \Psi_B\circ \opL (x) \geq f\circ A(x)
$.
Thus, with $u\in\dom (J_{\Psi_B\circ \opL})$, \eqref{eq:rec-J-geq-fA} is verified by
\begin{equation}
  \rec (J_{\Psi_B\circ\opL})(x) \overset{\eqref{eq:properties-recession}}{=}\lim_{\alpha\to +\infty} \frac{J_{\Psi_B\circ\opL}(u+\alpha x)}{\alpha}
  \geq \lim_{\alpha\to +\infty} \frac{f\circ A(u+\alpha x)}{\alpha}
  \label{eq:rec-f-A-geq-0}
  \overset{\eqref{eq:properties-recession}}{=}\rec (f\circ A)(x) \geq 0,
\end{equation}
where the last inequality is verified by \cite[Proposition 9.30(v)]{CAaMOTiH} with the assumption $\inf_{x\in\spX} f\circ A(x) >-\infty$.

Next, assume that $\rec (f\circ A)(x) =0$. Then, from \eqref{eq:BL-null-relation}, we obtain $x\in\nullsp(B\opL)$. With $u\in \dom(J_{\Psi_B\circ\opL})$, we observe
\begin{align}
  \operatorname{rec} (J_{\Psi_B\circ\opL})(x)
  \overset{\eqref{eq:properties-recession}}{=}& \lim_{\alpha \to +\infty}\frac{
  J_{\Psi_B\circ\opL}(u+\alpha x)}{\alpha}
  = \lim_{\alpha \to +\infty}\left[
    \frac{f\circ A (u+\alpha x)}{\alpha}
    + \mu\frac{\Psi_B\circ\opL(u+\alpha x)}{\alpha}
  \right]\\
  \label{eq:by-rec-properties-3}
  \overset{\hphantom{\eqref{eq:properties-recession}}}{=}&  0+ \mu\lim_{\alpha \to +\infty}
    \frac{\Psi_B\circ\opL(u+\alpha x)}{\alpha}\\
  \overset{\hphantom{\eqref{eq:properties-recession}}}{=}&\mu\lim_{\alpha \to +\infty} \left[\frac{\Psi\circ\opL(u+\alpha x)}{\alpha}
  - \frac{\min\limits_{v\in\spZ} \left[\Psi(v)
  +\frac{1}{2}\norm{B(\opL (u+\alpha x)- v)}^2_{\sptildeZ}\right]}{\alpha}\right]\\
  \label{eq:by-nullBL-relation}
  \overset{\hphantom{\eqref{eq:properties-recession}}}{=}&\mu\lim_{\alpha \to +\infty} \left[\frac{\Psi\circ\opL(u+\alpha x)}{\alpha}
  - \frac{\min\limits_{v\in\spZ} \left[\Psi(v)
  +\frac{1}{2}\norm{B(\opL u- v)}^2_{\sptildeZ}\right]}{\alpha}\right]\\
  \overset{\hphantom{\eqref{eq:properties-recession}}}{=} &\mu\lim_{\alpha \to +\infty}\frac{\Psi\circ\opL(u+\alpha x)}{\alpha}
  - 0
  =\mu\lim_{\alpha \to +\infty}\frac{\Psi(\opL u+\alpha \opL x)}{\alpha}
  \overset{\eqref{eq:properties-recession}}{=}\mu(\operatorname{rec} \Psi)(\opL x),
\end{align}
where the equality in \eqref{eq:by-rec-properties-3} is verified by \eqref{eq:properties-recession} as  
$
  \lim_{\alpha \to +\infty} \frac{f\circ A (u+\alpha x)}{\alpha}
  =\rec(f\circ A)(x) = 0,
$
and the equality in \eqref{eq:by-nullBL-relation} is verified by $x\in\nullsp (B\opL)$.
\end{proof}

Here, we show Proposition \ref{prop:coercivity-J} below.

\noindent
\textbf{(Proof of (a))} 
\eqref{eq:rec-J-geq-f-A-geq-0} has already been proven in \eqref{eq:rec-J-geq-fA}.
From \eqref{eq:rec-J-geq-fA}, $(\impliedby)$ in \eqref{eq:implication-rec-positive} is verified
by contraposition of ($\implies$) in \eqref{eq:implication-rec-0}, and $(\impliedby)$ in \eqref{eq:implication-rec-0} is verified by contraposition of ($\implies$) in \eqref{eq:implication-rec-positive}.
Hence, it suffices to show ($\implies$) in \eqref{eq:implication-rec-positive} and \eqref{eq:implication-rec-0}.

First, we prove $(\implies)$ in \eqref{eq:implication-rec-positive}
by showing $\rec (J_{\Psi_B\circ\opL})(x) >0$ under the following two cases: (I) $x\in\spX$ satisfies $\rec(f\circ A)(x)>0$ and (II) $x\in\spX$ satisfies $\rec(f\circ A)(x) = 0$ and $x\notin\nullsp\opL$.
If $\rec(f\circ A)(x)>0$ holds, we obtain $\rec (J_{\Psi_B\circ\opL})(x)\geq \rec(f\circ A)(x)>0$ from \eqref{eq:rec-J-geq-fA}.
On the other hand, if $\rec(f\circ A)(x) = 0$ and $x\notin\nullsp\opL$,
from \eqref{eq:rec-J-eq-PsiL}, we obtain $ \rec (J_{\Psi_B\circ\opL})(x) = \mu \rec (\Psi\circ \opL)(x) = \mu \rec (\Psi)(\opL x)>0$, where the last inequality is verified by the coercivity of $\Psi$ and $\opL x\neq 0_{\spZ}$ (see \eqref{eq:def-coercive} for the definition of coercivity).

Next, we prove $(\implies)$ in \eqref{eq:implication-rec-0}. 
Assume that $\rec(f\circ A)(x) = 0 \mbox{ and } x\in\nullsp \opL $.
From \eqref{eq:rec-J-eq-PsiL}, we obtain 
\begin{equation}
  \rec (J_{\Psi_B\circ\opL}) (x) = \mu\rec (\Psi)(\opL x) \overset{\opL x = 0_{\spZ}}{=}\mu\rec (\Psi)(0_{\spZ}) = 0.
\end{equation}

\noindent
\textbf{(Proof of (b))} 
Assume that $f$ is coercive. Then $f$ is bounded below, and thus
$
(\forall x\in\spX)\ \rec (J_{\Psi_B\circ\opL}) (x)\geq 0
$ holds from (a).
Moreover, by $\rec (f\circ A) = \rec(f)\circ A$ and the definition of the coercivity, we have
\begin{equation}
  \label{eq:rec-0-and-null-A}
  \rec(f)(A x) = 0 \iff x\in\nullsp A.
\end{equation}
Combining \eqref{eq:implication-rec-0} and \eqref{eq:rec-0-and-null-A},
we obtain 
\begin{equation}
  x\in\nullsp A \cap\nullsp \opL
  \overset{\eqref{eq:rec-0-and-null-A}}{\iff}
  "\rec(f\circ A)(x) = 0 \mbox{ and } x\in\nullsp \opL"
  \overset{\eqref{eq:implication-rec-0}}{\iff} \rec (J_{\Psi_B\circ\opL})(x) = 0.
\end{equation}

\noindent
\textbf{(Proof of (c))}
Assume that $f$ is coercive.
If $\rec (J_{\Psi_B\circ\opL}) (x)= 0$, \eqref{eq:rec-J-0-and-null} yields
\begin{align}
  (\forall u \in( \dom J_{\Psi_B\circ\opL}))(\forall \alpha\in\setR)\quad J_{\Psi_B\circ\opL}(u+\alpha x)
  = &f\circ A(u+\alpha x)+ \mu\Psi_B\circ\opL (u+\alpha x)\\
  =&f\circ A(u) + \mu\Psi_B\circ\opL(u)\\
  \label{eq:J-const-on-recession-dir}
  =&J_{\Psi_B\circ\opL}(u).
\end{align}
By combining \eqref{eq:rec-J-positive-under-coercivity-of-f} from (b) with \eqref{eq:J-const-on-recession-dir}, $J_{\Psi_B\circ \opL}$ is weakly coercive.

\noindent
\textbf{(Proof of (d))} 
First, assume the condition (i).
Since $f$ is coercive, \eqref{eq:rec-J-positive-under-coercivity-of-f} and \eqref{eq:rec-J-0-and-null} hold from (b).
Therefore,  the coercivity of $J_{\Psi_B\circ \opL}$ is verified by 
\begin{equation}
  x \notin (\nullsp A\cap \nullsp \opL)=\{0_{\spX}\}
  \iff \rec (J_{\Psi_B\circ\opL}) (x) >0.
\end{equation}

Next, assume the condition (ii). By \eqref{eq:implication-rec-positive}, we have the implication
$
  x\notin \nullsp\opL \implies \rec (J_{\Psi_B\circ\opL})(x)>0$.
Then, by combining this implication and $\nullsp \opL =\{0_{\spX}\}$, we obtain $(x\in\spX\setminus \{0_{\spX}\})\ \rec (J_{\Psi_B\circ\opL})(x)>0$.%

\setcounter{appnum}{4}
\setcounter{lemma}{0}
\setcounter{fact}{0}
\section{Proof of Lemma \ref{lemma:coercive-constant}}
\label{appendix:a5}
\noindent
\textbf{(Proof of (a))} 
Since $f\circ A $ is differentiable and $\nabla(f\circ A)$ is Lipschitz continuous over $\spY$, $\mathfrak{d}= f\circ A - \frac{\mu}{2}\norm{B\opL\cdot}^2_{\sptildeZ}$ is also continuously differentiable with Lipschitz continuous gradient.
From Fact~\ref{fact:generalized-hessian}(e), it suffices to show  
\begin{equation}
  (\forall x \in\spX) (\forall M \in \overline{\nabla}^2\mathfrak{d}(x))\quad \norm{M}_{\mathrm{op}} \leq \lipconst{\nabla(f\circ A)}.
\end{equation}
Choose $x\in\spX$ and $M \in \overline{\nabla}^2\mathfrak{d}(x)$ arbitrarily.
Since $-\frac{\mu}{2}\norm{B\opL (\cdot)}^2_{\sptildeZ}$ is twice continuously differentiable over $\spX$, Fact \ref{fact:generalized-hessian}(d) yields 
\begin{equation}
  \label{eq:hess-d-expression}
  \overline{\nabla}^2\mathfrak{d}(x) = \overline{\nabla}^2\left(f\circ A -\frac{\mu}{2}\norm{B\opL (\cdot)}^2_{\sptildeZ}\right)(x)
  = \overline{\nabla}^2(f\circ A)(x) - \mu\opL^*B^*B\opL,
\end{equation} 
which implies
$M + \mu\opL^*B^*B\opL \in \overline{\nabla}^2(f\circ A)(x)$.
From Fact \ref{fact:generalized-hessian}(e), $\norm{M + \mu\opL^*B^*B\opL}_{\mathrm{op}}\leq \lipconst{\nabla(f\circ A)}$ holds. This inequality yields $M + \mu\opL^*B^*B\opL \preceq \lipconst{\nabla(f\circ A)}\Id$. 
Moreover, Fact \ref{fact:generalized-hessian}(b) with $M\in \overline{\nabla}^2\mathfrak{d}(x)$ and $\mathfrak{d}\in\Gamma_0(\spX)$ ensures
$M\succeq \zeroMatrix_{\spX}$.
Combining the last two inequalities and $\mu\opL^*B^*B\opL\succeq \zeroMatrix_{\spX}$, we have 
\begin{equation}
  \lipconst{\nabla(f\circ A)}\Id \succeq M + \mu\opL^*B^*B\opL\succeq M \succeq \zeroMatrix_{\spX},
\end{equation}
which implies $\norm{M}_{\mathrm{op}} \leq \lipconst{\nabla(f\circ A)}$.

\noindent
\textbf{(Proof of (b))}
Since $\mathfrak{d}$ is convex and $\nabla{\mathfrak{d}}$ is $\lipconst{\nabla \mathfrak{d}}$-Lipschitz continuous, Baillon-Haddad theorem (see, e.g., \cite[Corollary 18.17]{CAaMOTiH}) ensures that
  $\nabla \mathfrak{d}$ is $\frac{1}{\lipconst{\nabla \mathfrak{d}}}$-cocoercive, i.e.,
  \begin{equation}
    (\forall (x_1,x_2)\in\spX\times\spX)\ \ip{x_1-x_2}{\nabla\mathfrak{d}(x_1) - \nabla\mathfrak{d}(x_2)}_{\spX}\geq \frac{1}{\lipconst{\nabla \mathfrak{d}}}\norm{\nabla\mathfrak{d}(x_1) - \nabla\mathfrak{d}(x_2)}^2_{\spX}.
  \end{equation}
  On the other hand, we have
  $
    (\forall v\in\spZ)\ \norm{\mu B^*B v}_{\spZ}^2\leq \mu^2 \norm{B}^2_{\mathrm{op}}\norm{Bv}^2_{\sptildeZ}.
  $
  Therefore, for every $(x_1,v_1)\in\spX\times \spZ$ and  $(x_2,v_2)\in\spX\times \spZ$, we obtain
  \begin{align}
    & \quad \norm{
    \begin{pmatrix}
        \nabla \mathfrak{d}(x_1)\\
        \mu B^*B v_1
    \end{pmatrix}
    - \begin{pmatrix}
      \nabla \mathfrak{d}(x_2)\\
      \mu B^*B v_2
  \end{pmatrix}
    }^2_{\spX\times\spZ}\\
    &= \norm{\nabla  \mathfrak{d} (x_1) - \nabla  \mathfrak{d}(x_2)}^2_{\spX}
    + \norm{\mu B^*B (v_1 - v_2)}^2_{\spZ}\\
    &\leq \lipconst{\nabla \mathfrak{d}} \ip{x_1 - x_2}{\nabla  \mathfrak{d}(x_1) - \nabla  \mathfrak{d}(x_2)}_{\spX} + \mu^2 \norm{B}^2_{\mathrm{op}}\norm{B(v_1 - v_2)}^2_{\sptildeZ}\\
    &= \lipconst{\nabla \mathfrak{d}} \ip{x_1 - x_2}{\nabla  \mathfrak{d} (x_1) - \nabla  \mathfrak{d}(x_2)}_{\spX} 
    + \mu  \norm{B}^2_{\mathrm{op}} \ip{v_1-v_2}{\mu B^*B(v_1-v_2)}_{\spZ}\\
    &\leq \max\{ \lipconst{\nabla \mathfrak{d}}, \mu  \norm{B}^2_{\mathrm{op}}\} ( \ip{x_1 - x_2}{\nabla  \mathfrak{d} (x_1) - \nabla  \mathfrak{d}(x_2)}_{\spX}  +  \ip{v_1-v_2}{\mu B^*B(v_1-v_2)}_{\spZ})\\
    &= \frac{1}{\rho} \ip{
      \begin{pmatrix} x_1  \\ 
        v_1
      \end{pmatrix}
      -
      \begin{pmatrix} x_2  \\ 
        v_2
      \end{pmatrix}}
      {
      \begin{pmatrix}
        \nabla  \mathfrak{d} (x_1) \\
        \mu B^*B v_1
      \end{pmatrix}
      -
      \begin{pmatrix}
        \nabla  \mathfrak{d} (x_2) \\
        \mu B^*B v_2
      \end{pmatrix}}_{\spX\times \spZ},
  \end{align}
  which implies that a mapping $\spX\times\spZ\to\spX\times\spZ:(x,v)\mapsto(\nabla\mathfrak{d}(x), \mu B^*B v)$ is $\rho$-cocoercive over $(\spX\times \spZ, \ip{\cdot}{\cdot}_{\spX\times \spZ},\norm{\cdot}_{\spX\times \spZ})$.

\setcounter{appnum}{5}
\setcounter{lemma}{0}
\setcounter{fact}{0}
\section{Proof of Theorem \ref{thm:convergence-analysis}}
\label{appendix:a6}
\subsection{Preliminary lemmas}
We use the following inclusion for proof of convergence analysis.
\begin{lemma}
  \label{lemma:relative-interior}
  In Problem \ref{prob:cLiGME-w-general-fidelity}, the following  inclusion holds:
  \begin{align}
    \label{eq:ri-of-conjugate}
    0_\spZ\in \ri\left[
      \dom \left(\left(
        \Psi + \frac{1}{2}\norm{B\cdot}^2_{\sptildeZ}\right)^*
      \right) - \ran (B^* B \opL) \right].
  \end{align}
\end{lemma}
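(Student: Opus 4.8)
The goal is to show $0_{\spZ}\in\ri\bigl[\dom\bigl((\Psi+\tfrac{1}{2}\norm{B\cdot}^2_{\sptildeZ})^*\bigr) - \ran(B^*B\opL)\bigr]$. The plan is to exploit the fact, already established in Proposition~\ref{prop:overall-convexity}(a) (see equation~\eqref{eq:dom-of-conjugate} and its proof in Appendix~\ref{sec:a1}), that the conjugate function $(\Psi+\tfrac{1}{2}\norm{B\cdot}^2_{\sptildeZ})^*$ is finite on all of $\ran(B^*B\opL)$; in fact the argument there shows more, namely that $(\Psi+\tfrac{1}{2}\norm{B\cdot}^2_{\sptildeZ})^*$ is everywhere finite on $\spZ$ after precomposition with $B^*B\opL$. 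First I would argue that $\dom\bigl((\Psi+\tfrac{1}{2}\norm{B\cdot}^2_{\sptildeZ})^*\bigr) = \spZ$: the function $\Psi+\tfrac{1}{2}\norm{B\cdot}^2_{\sptildeZ}$ is coercive (coercivity of $\Psi$ plus lower-boundedness of $\tfrac{1}{2}\norm{B\cdot}^2_{\sptildeZ}$, exactly as in the footnote accompanying~\eqref{eq:GME}), and a proper lsc convex function is coercive if and only if its conjugate is finite and continuous on the whole space --- this is a standard duality between coercivity and full-domain conjugacy (e.g.\ \cite[Proposition 14.16 / Theorem 11.8]{CAaMOTiH} or \cite[Theorem 11.8]{rockafellar2009variational}).

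Given $\dom\bigl((\Psi+\tfrac{1}{2}\norm{B\cdot}^2_{\sptildeZ})^*\bigr)=\spZ$, the set whose relative interior we care about becomes $\spZ - \ran(B^*B\opL)$. But $\ran(B^*B\opL)$ is a linear subspace of $\spZ$, hence $\spZ - \ran(B^*B\opL) = \spZ$, which is its own relative interior (it is open, indeed the whole space), and it certainly contains $0_{\spZ}$. So the inclusion~\eqref{eq:ri-of-conjugate} follows immediately. The entire content of the lemma is thus the reduction ``$\dom$ of that conjugate is all of $\spZ$,'' after which the statement is a triviality about subtracting a subspace from the whole space.

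The only subtle point --- and the step I would be most careful about --- is justifying $\dom\bigl((\Psi+\tfrac{1}{2}\norm{B\cdot}^2_{\sptildeZ})^*\bigr)=\spZ$ cleanly. Rather than invoking a coercivity/conjugate duality theorem verbatim, it is probably cleaner and more self-contained to reuse the estimate already carried out in the proof of Proposition~\ref{prop:overall-convexity}(a): there, for every $x\in\spX$ one bounds
\begin{equation}
  \Bigl(\Psi + \tfrac{1}{2}\norm{B\cdot}^2_{\sptildeZ}\Bigr)^*(B^*B\opL x) \le -\inf_{v_1\in\spZ}\Psi(v_1) - \inf_{\widetilde v_2\in\ran B}\Bigl(-\ip{B\opL x}{\widetilde v_2}_{\sptildeZ} + \tfrac{1}{2}\norm{\widetilde v_2}^2_{\sptildeZ}\Bigr) < \infty,
\end{equation}
and exactly the same splitting works with $B^*B\opL x$ replaced by an arbitrary $z\in\spZ$: write $(\Psi+\tfrac{1}{2}\norm{B\cdot}^2_{\sptildeZ})^*(z) = \sup_{v}\bigl[\ip{z}{v} - \Psi(v) - \tfrac{1}{2}\norm{Bv}^2_{\sptildeZ}\bigr] \le \sup_{v}[-\Psi(v)] + \sup_v\bigl[\ip{z}{v} - \tfrac{1}{2}\norm{Bv}^2_{\sptildeZ}\bigr]$, and bound the second supremum by completing the square using $\tfrac12\norm{Bv}^2 \ge 0$ --- hmm, but that only gives $\le \sup_v \ip{z}{v}$, which is $+\infty$ unless $z=0$. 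So that crude split is not enough in general; the genuine argument must use coercivity of the full sum $\Psi + \tfrac12\norm{B\cdot}^2_{\sptildeZ}$, not of the two pieces separately. Thus the correct route is: $\Psi+\tfrac12\norm{B\cdot}^2_{\sptildeZ}\in\Gamma_0(\spZ)$ is coercive, so by \cite[Proposition 14.15]{CAaMOTiH} (coercivity of $\varphi$ $\Leftrightarrow$ $\varphi^*$ continuous at $0$, combined with convexity giving continuity on $\setInt\dom\varphi^*$) we get $\dom\varphi^* = \spZ$; then subtract the subspace $\ran(B^*B\opL)$ to conclude. I would write it this way, citing the coercivity footnote of~\eqref{eq:GME} and the relevant proposition in \cite{CAaMOTiH}, and then close with the one-line subspace argument.
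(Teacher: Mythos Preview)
Your argument has a genuine gap: the claim that coercivity of $\varphi\coloneqq\Psi+\tfrac12\norm{B\cdot}^2_{\sptildeZ}$ implies $\dom\varphi^*=\spZ$ is false. Coercivity in the sense of this paper (equivalently, $\varphi(v)\to\infty$ as $\norm{v}\to\infty$) only yields $0_{\spZ}\in\operatorname{int}(\dom\varphi^*)$ (this is exactly what \cite[Proposition 14.16]{CAaMOTiH} says); full domain of the conjugate is equivalent to \emph{supercoercivity} ($\varphi(v)/\norm{v}\to\infty$), which is not assumed here. A concrete counterexample within the setting of Problem~\ref{prob:cLiGME-w-general-fidelity}: take $\Psi=\norm{\cdot}_1$ (coercive and prox-friendly) and $B=\zeroMatrix_{\spZ,\sptildeZ}$; then $\varphi=\norm{\cdot}_1$ and $\varphi^*=\iota_{[-1,1]^n}$, so $\dom\varphi^*\neq\spZ$. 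Your own attempt to verify $\dom\varphi^*=\spZ$ by the crude splitting already hinted at the problem, and the fallback ``coercivity $\Rightarrow$ full-domain conjugate'' does not rescue it.

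The paper's proof uses only the correct consequence $0_{\spZ}\in\operatorname{int}(\dom\varphi^*)\subset\ri(\dom\varphi^*)$, notes that $0_{\spZ}\in\ran(B^*B\opL)=\ri(\ran(B^*B\opL))$ since the range is a linear subspace, and then applies the relative-interior calculus rule $\ri(C)-\ri(D)=\ri(C-D)$ (\cite[Proposition 6.19(viii)]{CAaMOTiH}) to conclude $0_{\spZ}\in\ri(\dom\varphi^*-\ran(B^*B\opL))$. Your argument is easily repaired along these lines: from $0_{\spZ}\in\operatorname{int}(\dom\varphi^*)$ and $0_{\spZ}\in\ran(B^*B\opL)$ one gets directly that $\dom\varphi^*-\ran(B^*B\opL)$ contains an open ball around $0_{\spZ}$, hence $0_{\spZ}$ lies in its interior and a fortiori in its relative interior.
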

\begin{proof}
  The proof is similar to the proof of \cite[Lemma 2]{shoji2025} while $\dom \Psi=\spZ$ is assumed in \cite[Lemma 2]{shoji2025}.
  For completeness, we give a proof. 
  Since $\Psi$ is coercive and $\frac{1}{2}\norm{B\cdot}_{\widetilde{\spZ}}^2$ is bounded below with $\dom\left(\frac{1}{2}\norm{B\cdot}_{\widetilde{\spZ}}^2\right)=\spZ$, $\Psi + \frac{1}{2}\norm{B\cdot}_{\widetilde{\spZ}}^2$ is coercive by \cite[Corollary 11.16]{CAaMOTiH}.
Then \cite[Proposition 14.16]{CAaMOTiH} yields $0_{\spZ}\in\operatorname{int} \left(\dom \left( \Psi+ \frac{1}{2}\norm{B\cdot}_{\widetilde{\spZ}}^2\right)^*\right)$.
By the relation (see, e.g., \cite[(6.11)]{CAaMOTiH})
$
  \ri \left(\dom \left( \Psi + \frac{1}{2}\norm{B\cdot}_{\widetilde{\spZ}}^2\right)^*\right)\supset \operatorname{int} \left(\dom \left( \Psi + \frac{1}{2}\norm{B\cdot}_{\widetilde{\spZ}}^2\right)^*\right)
  \ni 0_{\spZ}
$
and $\ri \left(\ran (B^* B\opL)\right) =\ran (B^*B\opL) \ni 0_{\spZ}$,
we have 
\begin{align}
  0_{\spZ}\in  \ri \left(\dom \left( \Psi + \frac{1}{2}\norm{B\cdot}_{\widetilde{\spZ}}^2\right)^* \right) \cap \ri(\ran (B^* B\opL)),
\end{align}
which further yields \eqref{eq:ri-of-conjugate} by \cite[Proposition 6.19(viii)]{CAaMOTiH}.
\end{proof}

We also use the following inequalities which can be verified essentially by facts of Schur complement (see also Fact \ref{fact:properties-partitioned}).
\begin{lemma}
    \label{lemma:matrix-inequality}
    In Problem \ref{prob:cLiGME-w-general-fidelity},
    let
    $(\sigma, \tau)\in(0,\infty)\times (\frac{1}{2\rho},\infty)$ satisfy \eqref{eq:cond-sigma-tau}.
    Then the following hold.
    \begin{enumerate}[\upshape (a)]
    \item We have the inequality
    \begin{equation}
      \label{eq:positiveness-of-Schur-P-by-norm}
      \sigma - \frac{\mu^2}{\tau}\norm{B^*B\opL}^2_{\mathrm{op}} -\mu\norm{\opL^*\opL +\opC^*\opC}_{\mathrm{op}}>0,
    \end{equation}
    and thus the following holds:
    \begin{equation}
      \label{eq:inequality-for-Schur}
      \sigma \Id 
      - \frac{\mu^2}{\tau}\opL^*(B^*B)^2\opL
      - \mu \opL^*\opL -\mu \opC^*\opC \succ \zeroMatrix_{\spX}.
    \end{equation}
    \item $\opP\succ \zeroMatrix_{\spH}$
    holds, where $\opP$ is defined in \eqref{eq:def-P}.
    \item Let $\opP$ be partitioned as 
    $
      \opP = \begin{bmatrix}
        \opP_1 & \opP_2\\
        \opP_2^* & \opP_3
      \end{bmatrix},
    $
    where
    \begin{equation}
      \opP_1 \coloneqq\begin{bmatrix}
        \sigma \Id & -\mu\opL^*B^*B \\
        -\mu B^* B \opL & \tau \Id
      \end{bmatrix}, \ 
      \opP_2 \coloneqq \begin{bmatrix}
        -\mu \opL^*  & -\mu\opC^*\\
        \zeroMatrix_{\spZ, \spZ} & \zeroMatrix_{\spfrakZ, \spZ}
      \end{bmatrix},
      \opP_3 \coloneqq \begin{bmatrix}
        \mu \Id & \zeroMatrix_{\spfrakZ,\spZ}\\
        \zeroMatrix_{\spZ, \spfrakZ} & \mu \Id
      \end{bmatrix}.
    \end{equation}
    Then 
    $\opP^{-1}$ can be expressed as 
    \begin{equation}
      \label{eq:block-exp-P}
    \opP^{-1} = \begin{bmatrix}
      \mathfrak{Q}^{-1} & \opP_1^{-1}\opP_2(\opP_2^* \opP_1^{-1} \opP_2 - \opP_3)^{-1}\\
      \opP_3^{-1} \opP_2^* (\opP_2\opP_3^{-1} \opP_2^* - \opP_1)^{-1} & (\opP_3 - \opP_2^* \opP_1^{-1} \opP_2)^{-1}
    \end{bmatrix}
  \end{equation}
  where 
  \begin{equation}
    \label{eq:def-Q}
    \mathfrak{Q}\coloneqq \opP_1 - \opP_2\opP_3^{-1}\opP_2^* = \begin{bmatrix}
    \sigma \Id - \mu(\opL^* \opL + \opC^*\opC) & -\mu\opL^*B^*B \\
    -\mu B^* B \opL & \tau \Id
  \end{bmatrix} \succ\zeroMatrix_{\spX\times\spZ}.
  \end{equation}
  \item 
  Set $\mathfrak{R}_1\in\setLO{\spX}{\spX}$, $\mathfrak{R}_2\in\setLO{\spZ}{\spX}$ and
  $\mathfrak{R}_3\in\setLO{\spZ}{\spZ}$ to satisfy
  \begin{equation}
    \label{eq:Q-inv-partition}
    \mathfrak{Q}^{-1}\coloneqq\begin{bmatrix}
      \mathfrak{R}_1 & \mathfrak{R}_2\\
      \mathfrak{R}_2^* & \mathfrak{R}_3
      \end{bmatrix}\succ \zeroMatrix_{\spX\times\spZ}.
  \end{equation}
  Then we have the expressions $\mathfrak{R}_1 =  (\sigma \Id - \frac{\mu^2}{\tau}\opL^*(B^*B)^2\opL - \mu(\opL^*\opL +\opC^*\opC) )^{-1}$ and
$\mathfrak{R}_3 = ( \tau \Id - \mu^2 B^* B \opL (\sigma \Id - \mu(\opL^*\opL +\opC^*\opC))^{-1}\opL^*B^*B)^{-1}$, where the following inequalities hold:
\begin{align}
   \label{eq:bound-R1}
  \norm{\mathfrak{R}_1}_{\mathrm{op}}
  &\leq \left( \sigma - \frac{\mu^2}{\tau}\norm{B^*B\opL}_{\mathrm{op}}^2 - \mu\norm{\opL^*\opL +\opC^*\opC}_{\mathrm{op}} \right)^{-1},\\
   \label{eq:bound-R3}
  \norm{\mathfrak{R}_3}_{\mathrm{op}} 
  &\leq \left(
    \tau - \mu^2 \norm{ B^* B \opL}_{\mathrm{op}}^2 (\sigma - \mu\norm{\opL^*\opL +\opC^*\opC}_{\mathrm{op}})^{-1}
  \right)^{-1}.
\end{align}
\end{enumerate}
\end{lemma}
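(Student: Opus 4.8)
The proof of all four parts is a sequence of applications of the Schur-complement criteria in Fact \ref{fact:properties-partitioned}, supplemented by Fact \ref{fact:op-norm-inequality} for the operator-norm estimates in part (d). The only genuinely non-mechanical step is part (a), which I would treat as the crux and settle first; everything else then follows by bookkeeping.

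\textbf{Part (a).} Since $\tau>\tfrac{1}{2\rho}$ gives $2\rho\tau-1>0$, it suffices, in view of \eqref{eq:cond-sigma-tau}, to check that
\[
  \frac{2\rho\mu^2\norm{B^*B\opL}^2_{\mathrm{op}}+\tau}{2\rho\tau-1}\ \geq\ \frac{\mu^2}{\tau}\norm{B^*B\opL}^2_{\mathrm{op}}.
\]
Clearing the two positive denominators reduces this to $\tau^2+\mu^2\norm{B^*B\opL}^2_{\mathrm{op}}\geq 0$, which is immediate; combining with \eqref{eq:cond-sigma-tau} yields \eqref{eq:positiveness-of-Schur-P-by-norm}. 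Then \eqref{eq:inequality-for-Schur} follows from the estimates $\opL^*(B^*B)^2\opL=(B^*B\opL)^*(B^*B\opL)\preceq\norm{B^*B\opL}^2_{\mathrm{op}}\Id$ and $\opL^*\opL+\opC^*\opC\preceq\norm{\opL^*\opL+\opC^*\opC}_{\mathrm{op}}\Id$, which make the left-hand side of \eqref{eq:inequality-for-Schur} dominate the strictly positive multiple of $\Id$ produced by \eqref{eq:positiveness-of-Schur-P-by-norm}.

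\textbf{Parts (b) and (c).} With the partition $\opP=\begin{bmatrix}\opP_1&\opP_2\\ \opP_2^*&\opP_3\end{bmatrix}$, a direct computation gives $\opP_2\opP_3^{-1}\opP_2^*=\begin{bmatrix}\mu(\opL^*\opL+\opC^*\opC)&\zeroMatrix\\ \zeroMatrix&\zeroMatrix\end{bmatrix}$, hence $\opP_1-\opP_2\opP_3^{-1}\opP_2^*=\mathfrak{Q}$ as in \eqref{eq:def-Q}. Since $\opP_3=\mu\Id\succ\zeroMatrix$, Fact \ref{fact:properties-partitioned}(a) with pivot $\opP_3$ shows $\opP\succ\zeroMatrix_{\spH}\iff\mathfrak{Q}\succ\zeroMatrix_{\spX\times\spZ}$; applying Fact \ref{fact:properties-partitioned}(a) once more to $\mathfrak{Q}$ with pivot $\tau\Id$ reduces the latter to exactly \eqref{eq:inequality-for-Schur}, which holds by part (a). This proves (b) together with $\mathfrak{Q}\succ\zeroMatrix$. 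For the block inverse \eqref{eq:block-exp-P}, I would additionally check $\opP_1\succ\zeroMatrix$ — its pivot-$\tau\Id$ Schur complement is $\sigma\Id-\tfrac{\mu^2}{\tau}\opL^*(B^*B)^2\opL\succeq(\sigma-\tfrac{\mu^2}{\tau}\norm{B^*B\opL}^2_{\mathrm{op}})\Id\succ\zeroMatrix$ by part (a) — so that Fact \ref{fact:properties-partitioned}(c) applies verbatim and gives \eqref{eq:block-exp-P} with $(1,1)$-block $(\opP_1-\opP_2\opP_3^{-1}\opP_2^*)^{-1}=\mathfrak{Q}^{-1}$.

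\textbf{Part (d).} Apply Fact \ref{fact:properties-partitioned}(c) to $\mathfrak{Q}$ itself: its diagonal blocks $\tau\Id\succ\zeroMatrix$ and $\sigma\Id-\mu(\opL^*\opL+\opC^*\opC)\succeq(\sigma-\mu\norm{\opL^*\opL+\opC^*\opC}_{\mathrm{op}})\Id\succ\zeroMatrix$ (the last inequality by part (a)) are both positive definite, so the inversion formula is valid and reads off the stated expressions for $\mathfrak{R}_1$ and $\mathfrak{R}_3$. For \eqref{eq:bound-R1}, write $\mathfrak{R}_1^{-1}=\sigma\Id-P$ with $P\coloneqq\mu(\opL^*\opL+\opC^*\opC)+\tfrac{\mu^2}{\tau}\opL^*(B^*B)^2\opL\succeq\zeroMatrix$; since $\mathfrak{R}_1\succ\zeroMatrix$ as a diagonal block of $\mathfrak{Q}^{-1}\succ\zeroMatrix$, one has $\sigma\Id-P\succ\zeroMatrix$, and $\norm{P}_{\mathrm{op}}\leq\mu\norm{\opL^*\opL+\opC^*\opC}_{\mathrm{op}}+\tfrac{\mu^2}{\tau}\norm{B^*B\opL}^2_{\mathrm{op}}$, so Fact \ref{fact:op-norm-inequality} gives the claimed bound. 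For \eqref{eq:bound-R3}, write $\mathfrak{R}_3^{-1}=\tau\Id-P'$ with $P'\coloneqq\mu^2(B^*B\opL)(\sigma\Id-\mu(\opL^*\opL+\opC^*\opC))^{-1}(B^*B\opL)^*\succeq\zeroMatrix$; bounding $\norm{P'}_{\mathrm{op}}\leq\mu^2\norm{B^*B\opL}^2_{\mathrm{op}}\norm{(\sigma\Id-\mu(\opL^*\opL+\opC^*\opC))^{-1}}_{\mathrm{op}}\leq\mu^2\norm{B^*B\opL}^2_{\mathrm{op}}(\sigma-\mu\norm{\opL^*\opL+\opC^*\opC}_{\mathrm{op}})^{-1}$ (the inner inverse estimated by Fact \ref{fact:op-norm-inequality}) and applying Fact \ref{fact:op-norm-inequality} once more yields \eqref{eq:bound-R3}; the positivity of the denominator there is exactly \eqref{eq:positiveness-of-Schur-P-by-norm}. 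The main ``obstacle'' is thus merely the care needed in tracking which block serves as the pivot in each invocation of Fact \ref{fact:properties-partitioned}, and in matching the resulting Schur complements to the elementary inequality of part (a).
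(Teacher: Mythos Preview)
Your proof is correct and follows essentially the same route as the paper's: part (a) via the same elementary inequality (the paper phrases it as $\frac{2\rho\mu^2\norm{B^*B\opL}^2_{\mathrm{op}}+\tau}{2\rho\tau-1}>\frac{2\rho\mu^2\norm{B^*B\opL}^2_{\mathrm{op}}}{2\rho\tau}$, you clear denominators directly), and parts (b)--(d) via the same Schur-complement bookkeeping from Fact~\ref{fact:properties-partitioned} combined with Fact~\ref{fact:op-norm-inequality}. The only cosmetic difference is that for (b) the paper pivots once on the full lower-right $3\times 3$ block of $\opP$, whereas you pivot twice (first on $\opP_3$, then on $\tau\Id$ inside $\mathfrak{Q}$); your route has the minor advantage of producing $\mathfrak{Q}\succ\zeroMatrix$ en passant rather than as a separate observation in (c).
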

\begin{proof}
\textbf{(Proof of (a))}
\eqref{eq:positiveness-of-Schur-P-by-norm} and \eqref{eq:inequality-for-Schur} are verified by the conditions \eqref{eq:cond-sigma-tau} and $\tau > \frac{1}{2\rho}$ as
\begin{align}
  0 &< \sigma - \mu\norm{\opL^*\opL +\opC^*\opC}_{\mathrm{op}}-\frac{2\rho \mu^2\norm{B^*B\opL}^2_{\mathrm{op}} + \tau}{2\rho\tau - 1}
  <\sigma - \mu\norm{\opL^*\opL +\opC^*\opC}_{\mathrm{op}} -\frac{2\rho \mu^2\norm{B^*B\opL}^2_{\mathrm{op}}}{2\rho\tau}\\
  &= \sigma - \frac{\mu^2}{\tau}\norm{B^*B\opL}^2_{\mathrm{op}} -\mu\norm{\opL^*\opL +\opC^*\opC}_{\mathrm{op}}
  \leq \sigma  -\norm{\frac{\mu^2}{\tau}\opL^*(B^*B)^2\opL +  \mu(\opL^*\opL +\opC^*\opC)}_{\mathrm{op}},
\end{align}
where the last inequality verifies \eqref{eq:inequality-for-Schur}.

\noindent
\textbf{(Proof of (b))} Fact \ref{fact:properties-partitioned}(a) with
$
  \begin{bmatrix}
    \tau \Id & \zeroMatrix_{\spZ, \spZ} & \zeroMatrix_{\spfrakZ,\spZ}\\
       \zeroMatrix_{\spZ,\spZ} & \mu \Id & \zeroMatrix_{\spfrakZ,\spZ}\\
       \zeroMatrix_{\spZ,\spfrakZ} & \zeroMatrix_{\spZ,\spfrakZ} & \mu\Id
  \end{bmatrix}\succ \zeroMatrix_{\spZ\times\spZ\times\spfrakZ}
$ yields
{
\begin{align}
  \opP\succ \zeroMatrix_{\spH}
  \iff &\sigma \Id
  - \begin{bmatrix}
    -\mu\opL^*B^*B & -\mu \opL^* & -\mu\opC^*
  \end{bmatrix}
  \begin{bmatrix}
    \tau \Id & \zeroMatrix_{\spZ, \spZ} & \zeroMatrix_{\spfrakZ,\spZ}\\
       \zeroMatrix_{\spZ,\spZ} & \mu \Id & \zeroMatrix_{\spfrakZ,\spZ}\\
       \zeroMatrix_{\spZ,\spfrakZ} & \zeroMatrix_{\spZ,\spfrakZ} & \mu\Id
  \end{bmatrix}^{-1}
  \begin{bmatrix}
    -\mu B^*B\opL \\ -\mu \opL \\ -\mu\opC
  \end{bmatrix}\succ \zeroMatrix_{\spX}\\
  \iff &\sigma \Id 
  - \frac{\mu^2}{\tau}\opL^*(B^*B)^2\opL
  - \mu \opL^*\opL -\mu \opC^*\opC \succ \zeroMatrix_{\spX}
\end{align}
}%
Therefore, by \eqref{eq:inequality-for-Schur}, we have $\opP\succ\zeroMatrix_{\spH}$.

\noindent
\textbf{(Proof of (c))} 
From \eqref{eq:inequality-for-Schur}, we have 
$\sigma \Id - \frac{\mu^2}{\tau} \opL^*(B^*B)^2\opL\succ\zeroMatrix_{\spX}$.
Then applying Fact \ref{fact:properties-partitioned}(a) to $\opP_1$ with $\tau \Id\succ \zeroMatrix_{\spX}$ and $\sigma \Id - \frac{\mu^2}{\tau} \opL^*(B^*B)^2\opL\succ\zeroMatrix_{\spX}$, we get 
$\opP_1\succ \zeroMatrix_{\spX\times\spZ}$.
By $\opP\succ\zeroMatrix_{\spH}$, $\opP_1\succ\zeroMatrix_{\spX\times\spZ}$ and $\opP_3\succ\zeroMatrix_{\spZ\times\spfrakZ}$, Fact~\ref{fact:properties-partitioned}(c) presents an expression of $\opP^{-1}$ as in \eqref{eq:block-exp-P}. 
Moreover, by applying Fact \ref{fact:properties-partitioned}(a) to $\opP$ with $\opP\succ\zeroMatrix_{\spH}$ by (b) and with $\opP_3\succ \zeroMatrix_{\spZ\times\spfrakZ}$, we also get $\mathfrak{Q}=\opP_1 - \opP_2\opP_3^{-1}\opP_2^*\succ\zeroMatrix_{\spX\times\spZ}$.

\noindent 
{ \sloppy
\textbf{(Proof of (d))} By applying Fact \ref{fact:properties-partitioned}(c) to $\mathfrak{Q}$ in \eqref{eq:def-Q} with 
$
  \sigma \Id - \mu(\opL^* \opL + \opC^*\opC)\succ\zeroMatrix_{\spX}
$ from \eqref{eq:inequality-for-Schur} and with $\tau\Id\succ\zeroMatrix_{\spZ}$,
we have the expressions 
\begin{align}
\mathfrak{R}_1 &=  (\sigma \Id - \frac{\mu^2}{\tau}\opL^*(B^*B)^2\opL - \mu(\opL^*\opL +\opC^*\opC) )^{-1},\\
\mathfrak{R}_3 &= ( \tau \Id - \mu^2 B^* B \opL (\sigma \Id - \mu(\opL^*\opL +\opC^*\opC))^{-1}\opL^*B^*B)^{-1}.
\end{align}
From \eqref{eq:inequality-for-Schur}, we have $\mathfrak{R}_1\succ\zeroMatrix_{\spX}$.
Then, by applying Fact \ref{fact:properties-partitioned}(b) to $\mathfrak{Q}^{-1}\succ\zeroMatrix_{\spX\times\spZ}$ with $\mathfrak{R}_1\succ\zeroMatrix_{\spX}$, we obtain $\mathfrak{R}_3 - \mathfrak{R}_2^*\mathfrak{R}_1^{-1}\mathfrak{R}_2\succ\zeroMatrix_{\spZ}$, which further implies $\mathfrak{R}_3\succ\zeroMatrix_{\spZ}$.
With $\mathfrak{R}_1\succ\zeroMatrix_{\spX}$ and $\mathfrak{R}_3\succ\zeroMatrix_{\spZ}$, we observe
\begin{align}
  &\norm{\mathfrak{R}_1}_{\mathrm{op}}
  = \norm{\left(\sigma \Id - \frac{\mu^2}{\tau}\opL^*(B^*B)^2\opL - \mu(\opL^*\opL +\opC^*\opC)\right)^{-1}}_{\mathrm{op}}\\
  &\overset{\mbox{Fact \ref{fact:op-norm-inequality}}}{\leq} \left( \sigma - \norm{\frac{\mu^2}{\tau}\opL^*(B^*B)^2\opL + \mu(\opL^*\opL +\opC^*\opC)}_{\mathrm{op}} \right)^{-1}
  \leq \left( \sigma - \frac{\mu^2}{\tau}\norm{B^*B\opL}_{\mathrm{op}}^2 - \mu\norm{\opL^*\opL +\opC^*\opC}_{\mathrm{op}} \right)^{-1},\\
  &\norm{\mathfrak{R}_3}_{\mathrm{op}} \overset{\hphantom{\mbox{Fact \ref{fact:op-norm-inequality}}}}{=}\norm{( \tau \Id - \mu^2 B^* B \opL (\sigma \Id - \mu(\opL^*\opL +\opC^*\opC))^{-1}\opL^*B^*B)^{-1}}_{\mathrm{op}}
  \\
  &\hphantom{\norm{\mathfrak{R}_3}_{\mathrm{op}}} \overset{\mbox{Fact \ref{fact:op-norm-inequality}}}{\leq} \left(
    \tau - \norm{\mu^2 B^* B \opL (\sigma \Id - \mu(\opL^*\opL + \opC^*\opC) )^{-1}\opL^*B^*B}_{\mathrm{op}}
  \right)^{-1}
  \\
  &\hphantom{\norm{\mathfrak{R}_3}_{\mathrm{op}}}  \overset{\hphantom{\mbox{Fact \ref{fact:op-norm-inequality}}}}{\leq}\left(
    \tau - \mu^2 \norm{ B^* B \opL}_{\mathrm{op}}^2 \norm{(\sigma \Id - \mu(\opL^*\opL + \opC^*\opC) )^{-1}}_{\mathrm{op}}
  \right)^{-1}
  \\
  \label{eq:ineq-R3}
  &\hphantom{\norm{\mathfrak{R}_3}_{\mathrm{op}}}  \overset{\hphantom{\mbox{Fact \ref{fact:op-norm-inequality}}}}{\leq} \left(
    \tau - \mu^2 \norm{ B^* B \opL}_{\mathrm{op}}^2 (\sigma - \mu\norm{\opL^*\opL +\opC^*\opC}_{\mathrm{op}})^{-1}
  \right)^{-1},
\end{align}
where the inequality on \eqref{eq:ineq-R3} follows from 
\begin{equation}
  \tau - \mu^2 \norm{ B^* B \opL}_{\mathrm{op}}^2 \norm{(\sigma \Id - \mu(\opL^*\opL + \opC^*\opC) )^{-1}}_{\mathrm{op}}
  \overset{}{\geq}
  \tau - \mu^2 \norm{ B^* B \opL}_{\mathrm{op}}^2 (\sigma - \mu\norm{\opL^*\opL + \opC^*\opC }_{\mathrm{op}})^{-1} \label{eq:inequality-for-bound-R3}
  \overset{\eqref{eq:positiveness-of-Schur-P-by-norm}}{>}0
\end{equation}
which is
verified by Fact \ref{fact:op-norm-inequality} with $\sigma \Id - \mu(\opL^*\opL + \opC^*\opC)\succ \zeroMatrix_{\spX}$ from \eqref{eq:inequality-for-Schur}.
}
\end{proof}

\subsection{Proof of Theorem \ref{thm:convergence-analysis}}
The proof is inspired by Condat's primal dual splitting algorithm \cite{condat2013}.

\noindent
\textbf{(Proof of (a))} Note that the solution set $\setS$ of \eqref{eq:cLiGME-w-general-fidelity} is also the solution set of the unconstrained optimization problem: 
\begin{equation}
  \label{eq:cLiGME-unconst}
  \minimize_{x\in\spX}
  J_{\Psi_B\circ\opL}(x) + \mu\iota_{\Cz}\circ\opC(x),
\end{equation}
which is nonempty by Assumption \ref{assumption:alg}(iii).
Applying Fermat's rule \eqref{eq:fermat-rule} to the convex function $J_{\Psi_B\circ\opL} + \mu\iota_{\Cz}\circ\opC$ in \eqref{eq:cLiGME-unconst}, we have 
\begin{equation}
  \label{eq:fermat-rule-J}
  x^\diamond \in \setS\iff 
  0_{\spX}\in \partial (J_{\Psi_B\circ\opL} + \mu\iota_{\Cz}\circ\opC) (x^\diamond)
   \left(= \partial J_{\Psi_B\circ\opL}(\xdiamond) + \mu \opC^*\circ \partial \iota_{\Cz}\circ\opC(\xdiamond)\right),
\end{equation}
where the equality follows from \eqref{eq:sum-rule-J-and-constraint} and \eqref{eq:chain-rule-of-indicator}.
In the following, we calculate the subdifferential of the RHS of \eqref{eq:J-sum-of-convex}.
Since $\mathfrak{d}$ is differentiable over $\spX$, we have 
\begin{equation}
  \label{eq:subdifferential-of-J}
  \partial J_{\Psi_B\circ\opL} = \nabla\mathfrak{d}+ \partial \left(\mu\Psi \circ\opL + \mu \left(\Psi + \frac{1}{2}\norm{B \cdot}^2_{\sptildeZ}\right)^*\circ B^*B\opL\right).
\end{equation}
The sum rule \eqref{eq:sum-rule} with \eqref{eq:dom-of-conjugate} yields
{
\begin{align}
  \label{eq:sum-rule-of-Psi-and-GME}
  \partial \left(\mu\Psi \circ\opL + \mu \left(\Psi + \frac{1}{2}\norm{B \cdot}^2_{\sptildeZ}\right)^*\circ B^*B\opL\right)
  = \mu\partial (\Psi \circ\opL) 
  + \mu \partial \left( \left(\Psi + \frac{1}{2}\norm{B \cdot}^2_{\sptildeZ}\right)^*\circ B^*B\opL\right).
\end{align}
}%
Then, by the chain rule \eqref{eq:chain-rule} with \eqref{eq:ri-of-conjugate}, we have 
\begin{align}
  \label{eq:subdifferential-of-conjugate}
  \partial \left( \left(\Psi + \frac{1}{2}\norm{B \cdot}^2_{\sptildeZ}\right)^*\circ B^*B\opL\right)
  = (\opL^* B^* B) \circ \partial \left(\Psi + \frac{1}{2}\norm{B \cdot}^2_{\sptildeZ}\right)^* \circ (B^*B\opL).
\end{align}
Combining \eqref{eq:chain-rule-of-regularizer}, \eqref{eq:subdifferential-of-J}, \eqref{eq:sum-rule-of-Psi-and-GME} and \eqref{eq:subdifferential-of-conjugate}, we obtain
{
\begin{equation}
  \label{eq:subdifferential-J-decomposed}
  \partial J_{\Psi_B\circ\opL} = \nabla\mathfrak{d} + \mu \opL^* \circ \partial\Psi \circ \opL + \mu(\opL^* B^* B) \circ \partial \left(\Psi + \frac{1}{2}\norm{B \cdot}^2_{\sptildeZ}\right)^* \circ (B^*B\opL)
\end{equation}
}%
Furthermore, by the relation in \eqref{eq:subdifferential-and-conjugate} between subdifferential and conjugate, we have
{
\thickmuskip=0\thickmuskip
\medmuskip=0\medmuskip
\thinmuskip=0.0\thinmuskip
\arraycolsep=0.5\arraycolsep
\begin{equation}
  \label{eq:subdifferential-and-conjugate-J}
  \left.
    \begin{aligned}
    w^\diamond \in \partial \Psi (\opL x^\diamond) \iff
  &\opL x^\diamond \in \partial \Psi^* (w^\diamond),\\
  v^\diamond \in
  \partial \left(\Psi + \frac{1}{2} \norm{B \cdot}^2_{\sptildeZ}\right)^* (B^*B\opL x^\diamond)\iff & B^*B\opL x^\diamond \in \partial \left(\Psi + \frac{1}{2} \norm{B \cdot}^2_{\sptildeZ}\right) (v^\diamond)    
  \ \left(\overset{(\clubsuit)}{=} \ \partial \Psi (v^\diamond) +B^* B v^\diamond\right),\\
  z^\diamond \in\partial \iota_{\Cz} (\opC x^\diamond) \iff &
  \opC x^\diamond \in \partial \iota_{\Cz}^*(z^{\diamond}).
    \end{aligned}
    \right\}
\end{equation}
where the equality in $(\clubsuit)$ follows from the sum rule \eqref{eq:sum-rule} with $\dom\left(\frac{1}{2}\norm{B\cdot}^2_{\sptildeZ}\right)= \spZ$.
}%
Therefore, we have
\begin{align}
  x^\diamond \in\setS
  \overset{\eqref{eq:fermat-rule-J}}{\iff}& 0_{\spX} \in\partial  J_{\Psi_B\circ\opL}(x^\diamond) + \mu \opC^*\circ \partial \iota_{\Cz}\circ \opC (x^\diamond)\\
  \overset{\eqref{eq:subdifferential-J-decomposed}}{\iff} &0_{\spX} \in \nabla\mathfrak{d}(x^\diamond) + \mu(\opL^* B^* B) \circ \partial \left(\Psi + \frac{1}{2}\norm{B \cdot}^2_{\sptildeZ}\right)^* \circ (B^*B\opL) (x^\diamond)\\ 
  &\quad \quad \quad + \mu \opL^* \circ \partial\Psi \circ \opL (x^\diamond) + \mu \opC^*\circ \partial \iota_{\Cz}\circ \opC (x^\diamond)\\
  \iff & \exists (\vdiamond, \wdiamond, \zdiamond)\in\spZ\times\spZ\times\spfrakZ \mbox{ s.t. }\\
  & \begin{cases}
    0_{\spX} \in \nabla\mathfrak{d}(x^\diamond) +
    \mu \opL^* B^* B v^\diamond
    + \mu \opL^* w^\diamond 
    + \mu \opC^* z^\diamond\\
    v^\diamond \in \partial \left(\Psi + \frac{1}{2} \norm{B \cdot}^2_{\sptildeZ}\right)^* (B^*B\opL x^\diamond)\\
     w^\diamond \in \partial \Psi (\opL x^\diamond)
   \\
    z^\diamond \in\partial \iota_{\Cz} (\opC x^\diamond)
  \end{cases}\\
  \overset{\eqref{eq:subdifferential-and-conjugate-J}}{\iff} & \exists (\vdiamond, \wdiamond, \zdiamond)\in\spZ\times\spZ\times\spfrakZ \mbox{ s.t. }\\
  & \begin{cases}
    0_{\spX} \in \nabla\mathfrak{d}(x^\diamond) +
    \mu \opL^* B^* B v^\diamond
    + \mu \opL^* w^\diamond 
    + \mu \opC^* z^\diamond\\
    0_{\spZ} \in - \mu B^*B\opL x^\diamond + \mu B^*B v^{\diamond} + \mu\partial \Psi (v^\diamond)\\
    0_{\spZ}\in -\mu\opL x^\diamond + \mu\partial \Psi^*(w^\diamond)\\
    0_{\spfrakZ}\in -\mu\opC x^\diamond + \mu\partial \iota_{\Cz}^*(z^{\diamond})
  \end{cases}\\
  \iff & \exists (\vdiamond, \wdiamond, \zdiamond)\in\spZ\times\spZ\times\spfrakZ \mbox{ s.t. } (0_{\spX},0_{\spZ},0_{\spZ},0_{\spfrakZ})\in (F+G)(x^\diamond,v^\diamond,w^\diamond,z^\diamond).
\end{align}

\noindent \textbf{(Proof of (b))} 
The following relation holds:
\begin{align}
  (\xi,\zeta,\eta,\varsigma) = T(x, v, w, z)
  \label{eq:characterize-T-by-inclusion}
  \iff 
  & \begin{cases}
  [\sigma \Id - \nabla \mathfrak{d}] (x) - \mu\opL^*B^*B v - \mu\opL^* w-\mu\opC^* z = \sigma \xi\\
  2\mu B^*B\opL \xi -\mu B^*B\opL x +(\tau \Id - \mu B^*B)(v) \in (\tau\Id +\mu\partial \Psi)(\zeta)\\
  2\mu \opL \xi - \mu\opL x +\mu w \in \mu(\Id + \partial \Psi^*) (\eta)\\
  2\mu \opC\xi - \mu \opC x + \mu z \in \mu (\Id + \partial \iota_{\Cz}^*)(\varsigma)
\end{cases}\\
\label{eq:characterize-T-by-F-G}
\iff & (\opP - F)(x,v,w,z) \in (\opP+G)(\xi, \zeta, \eta, \varsigma),
\end{align}
where we used the relation in \eqref{eq:subdifferential-and-prox} to derive \eqref{eq:characterize-T-by-inclusion} and $\opP$ in \eqref{eq:characterize-T-by-F-G} is defined in \eqref{eq:def-P} (Note: $(\Id +\partial \iota_{\Cz}*)^{-1}=\prox_{\iota_{\Cz}^*} = \Id - \prox_{\iota_{\Cz}} = \Id - P_{\Cz}$ follows from \cite[Example 12.25, Proposition 24.8(ix)]{CAaMOTiH}).
Hence, \eqref{eq:fixed-point-encode} is verified by
\begin{align}
  \xdiamond\in\setS\overset{\mbox{ Theorem \ref{thm:convergence-analysis}(a) }}{\iff} &\exists (\vdiamond, \wdiamond, \zdiamond)\in\spZ\times\spZ\times\spfrakZ \mbox{ s.t. } \\
  &(0_{\spX}, 0_{\spZ}, 0_{\spZ}, 0_{\spfrakZ}) \in (F+G)(x^\diamond, v^\diamond, w^\diamond, z^\diamond) \\
  \overset{\hphantom{\mbox{ Theorem \ref{thm:convergence-analysis}(a) }}}{\iff}
  &\exists (\vdiamond, \wdiamond, \zdiamond)\in\spZ\times\spZ\times\spfrakZ \mbox{ s.t. }\\
  &(\opP - F)(x^\diamond, v^\diamond, w^\diamond, z^\diamond) \in (\opP+G)(x^\diamond, v^\diamond, w^\diamond, z^\diamond)\\
  \overset{\hphantom{\mbox{ Theorem \ref{thm:convergence-analysis}(a) }}}{\overset{\eqref{eq:characterize-T-by-F-G}}{\iff}}
  &\exists (\vdiamond, \wdiamond, \zdiamond)\in\spZ\times\spZ\times\spfrakZ \mbox{ s.t. }
  (x^\diamond, v^\diamond, w^\diamond, z^\diamond) \in \fix T\\
  \overset{\hphantom{\mbox{ Theorem \ref{thm:convergence-analysis}(a) }}}{\overset{}{\iff}}
  & \xdiamond \in \Xi(\fix T).
\end{align}

\noindent
\textbf{(Proof of (c))} 
$\opP\succ \zeroMatrix_{\spH}$ is verified by Lemma \ref{lemma:matrix-inequality}(b).
Firstly, we show that $\opP^{-1}\circ F$ is $\frac{1}{\theta}$-cocoercive over $(\spH, \ip{\cdot}{\cdot}_{\opP}, \norm{\cdot}_{\opP})$.
Choose $h_1 = (x_1,v_1,w_1,z_1)\in\spH$ and $h_2 = (x_2,v_2,w_2,z_2)\in\spH$ arbitrary.
Noting that $F (h_1) - F (h_2) = (\nabla \mathfrak{d}(x_1) - \nabla \mathfrak{d}(x_2), \mu B^*B(v_1-v_2), 0_{\spZ}, 0_{\spfrakZ})$,
with $\mathfrak{Q}$ in \eqref{eq:def-Q} and the expression of $\opP^{-1}$ in \eqref{eq:block-exp-P}, we observe
{
\begin{align}
  &\norm{\opP^{-1}\circ F (h_1)- \opP^{-1}\circ F (h_2)}^2_{\opP}
  =\ip{\opP^{-1}(F (h_1)-  F (h_2))}{ F (h_1)-  F (h_2)}_{\spH}\\
  =& \ip{\opP^{-1}
  \begin{bmatrix}
    \nabla \mathfrak{d}(x_1) - \nabla \mathfrak{d}(x_2)\\ \mu B^*B(v_1-v_2)\\ 0_{\spZ}\\ 0_{\spfrakZ}
  \end{bmatrix} }{\begin{bmatrix}
    \nabla \mathfrak{d}(x_1) - \nabla \mathfrak{d}(x_2)\\ \mu B^*B(v_1-v_2)\\ 0_{\spZ}\\ 0_{\spfrakZ}
  \end{bmatrix}}_{\spH}\\
  =&\ip{\mathfrak{Q}^{-1} (\nabla \mathfrak{d}(x_1) - \nabla \mathfrak{d}(x_2), \mu B^*B(v_1-v_2))}{(\nabla \mathfrak{d}(x_1) - \nabla \mathfrak{d}(x_2), \mu B^*B(v_1-v_2))}_{\spX\times \spZ}\\
  \leq &\norm{\mathfrak{Q}^{-1}}_{\mathrm{op}} \norm{ (\nabla \mathfrak{d}(x_1) - \nabla \mathfrak{d}(x_2), \mu B^*B(v_1-v_2))}_{\spX\times \spZ}^2\\
  \leq &   \frac{\norm{\mathfrak{Q}^{-1}}_{\mathrm{op}}}{\rho} \cdot \ip{(x_1 - x_2, v_1-v_2)}{(\nabla \mathfrak{d}(x_1) - \nabla \mathfrak{d}(x_2), \mu B^*B(v_1-v_2))}_{\spX\times \spZ},
  \label{eq:cocoercivity-with-opnorm}
\end{align}
where we used Lemma \ref{lemma:coercive-constant}(b) on the last inequality.
}%
From Fact \ref{fact:properties-partitioned}(d) with \eqref{eq:Q-inv-partition}, \eqref{eq:bound-R1} and \eqref{eq:bound-R3}, $\norm{\mathfrak{Q}^{-1}}_{\mathrm{op}}$ can be bounded above by
\begin{align}
  &\norm{\mathfrak{Q}^{-1}}_{\mathrm{op}}
  =
   \norm{\begin{bmatrix}
    \mathfrak{R}_1 & \mathfrak{R}_2\\
    \mathfrak{R}_2^* & \mathfrak{R}_3
    \end{bmatrix}}_{\mathrm{op}}
  \leq 
  \norm{\mathfrak{R}_1}_{\mathrm{op}} + \norm{\mathfrak{R}_3}_{\mathrm{op}}\\
  \leq
  & \left(\sigma  - \frac{\mu^2}{\tau}\norm{B^* B\opL}^2_{\mathrm{op}} - \mu\norm{\opL^*\opL + \opC^*\opC }_{\mathrm{op}}\right)^{-1}
  + \left(\tau - \mu^2 \norm{B^* B \opL}_{\mathrm{op}}^2 (\sigma - \mu\norm{\opL^*\opL + \opC^*\opC }_{\mathrm{op}})^{-1}\right)^{-1}\\
  = & \frac{\tau}{\sigma\tau - \mu^2 \norm{B^* B \opL}_{\mathrm{op}}^2 - \tau\mu\norm{\opL^*\opL + \opC^*\opC }_{\mathrm{op}}}
  + \frac{\sigma - \mu\norm{\opL^*\opL + \opC^*\opC }_{\mathrm{op}}}{\sigma\tau - \mu^2 \norm{B^* B \opL}_{\mathrm{op}}^2 - \tau\mu\norm{\opL^*\opL + \opC^*\opC }_{\mathrm{op}}}
  \\
  = &
  \frac{ \sigma  + \tau- \mu\norm{\opL^*\opL + \opC^*\opC }_{\mathrm{op}}}{\sigma \tau - \tau\mu\norm{\opL^*\opL + \opC^*\opC }_{\mathrm{op}} - \mu^2 \norm{B^* B \opL}_{\mathrm{op}}^2} = \theta\rho.
  \label{eq:opnorm-ineq}
\end{align}
Combining \eqref{eq:cocoercivity-with-opnorm} and \eqref{eq:opnorm-ineq}, we have 
\begin{equation}
  \norm{\opP^{-1}\circ F (h_1)- \opP^{-1}\circ F (h_2)}^2_{\opP}
  \leq  \theta \ip{(x_1 - x_2, v_1-v_2)}{(\nabla \mathfrak{d}(x_1) - \nabla \mathfrak{d}(x_2), \mu B^*B(v_1-v_2))}_{\spX\times \spZ}.
\end{equation}
Furthermore, from the observation
\begin{align}
  &\ip{(x_1 - x_2, v_1-v_2)}{(\nabla \mathfrak{d}(x_1) - \nabla \mathfrak{d}(x_2), \mu B^*B(v_1-v_2))}_{\spX\times \spZ}
  =  \ip{h_1 - h_2}{F (h_1) - F (h_2)}_{\spH}\\
  = & \ip{h_1 - h_2}{\opP(\opP^{-1}\circ F (h_1) - \opP^{-1}\circ F (h_2))}_{\spH}
  = \ip{h_1 - h_2}{\opP^{-1}\circ F (h_1) - \opP^{-1}\circ F (h_2)}_{\opP},
\end{align}
we obtain
\begin{align}
  \norm{\opP^{-1}\circ F (h_1)- \opP^{-1}\circ F (h_2)}^2_{\opP}
  \leq &\theta \ip{(x_1 - x_2, v_1-v_2)}{(\nabla \mathfrak{d}(x_1) - \nabla \mathfrak{d}(x_2), \mu B^*B(v_1-v_2))}_{\spX\times \spZ}\\
  = & \theta\ip{h_1 - h_2}{\opP^{-1}\circ F (h_1) - \opP^{-1}\circ F (h_2)}_{\opP},
\end{align}
which implies that $\opP^{-1}\circ F$ is $\frac{1}{\theta}$-cocoercive over $(\spH,\ip{\cdot}{\cdot}_{\opP},\norm{\cdot}_{\opP})$.

Secondly, we show that $\opP^{-1}\circ G$ is maximally monotone over $(\spH,\ip{\cdot}{\cdot}_{\opP},\norm{\cdot}_{\opP})$.
Set 
\begin{align}
  &G_1:\spH\to2^{\spH}:(x,v,w,z) \mapsto \{0_{\spX}\}\times \mu\partial\Psi(v) \times \mu\partial \Psi^*(w) \times \mu\partial\iota_{\Cz}^*(z),\\
  &G_2:\spH\to\spH:(x,v,w,z)\mapsto  (\mu\opL B^*B v + \mu\opL^*w + \mu \opC^*z, -\mu B^*B \opL x, -\mu \opL x, -\mu\opC x).
\end{align}
Then, from \cite[Corollary 13.38, Proposition 20.23 and Theorem 20.25]{CAaMOTiH}, $G_1$ is maximally monotone over $(\spH,\ip{\cdot}{\cdot}_{\spH}, \norm{\cdot}_{\spH})$. Also, $G_2$ is a bounded linear skew-symmetric operator (i.e., $G_2\in\setLO{\spH}{\spH}$ and $G_2^* = -G_2$) and thus is maximally monotone over $(\spH,\ip{\cdot}{\cdot}_{\spH}, \norm{\cdot}_{\spH})$ \cite[Example 20.35]{CAaMOTiH}.
Moreover, from $\dom G_2 \coloneqq \{u\in\spH\mid G_2u\neq \emptyset\} = \spH$ in the sense of set-valued operator and from \cite[Corollary 25.5 (i)]{CAaMOTiH}, $G= G_1+G_2$ is maximally monotone over $(\spH,\ip{\cdot}{\cdot}_{\spH}, \norm{\cdot}_{\spH})$.
Therefore, the maximal monotonicity of $\opP^{-1}\circ G$ over $(\spH,\ip{\cdot}{\cdot}_{\opP},\norm{\cdot}_{\opP})$ is verified by \cite[Proposition 20.24]{CAaMOTiH}.

Finally, we show that $T$ is $\frac{2}{4-\theta}$-averaged nonexpansive over $(\spH,\ip{\cdot}{\cdot}_{\opP},\norm{\cdot}_{\opP})$.
By applying $\opP^{-1}\succ \zeroMatrix_{\spH}$ to the both sides in \eqref{eq:characterize-T-by-F-G}, we have
\begin{align}
  (\xi,\zeta,\eta,\varsigma) = T(x, v, w, z)
  \overset{\eqref{eq:characterize-T-by-F-G}}{\iff} &(\opP -F)(x, v, w, z) \in (\opP + G)(\xi,\zeta,\eta,\varsigma) \\
  \label{eq:inclusion-id-P-comp-F}
  \iff &(\Id - \opP^{-1}\circ F) (x, v, w, z) \in (\Id + \opP^{-1}\circ G)(\xi,\zeta,\eta,\varsigma).
\end{align}
Since $\opP^{-1}\circ G$ is maximally monotone over $(\spH,\ip{\cdot}{\cdot}_{\opP},\norm{\cdot}_{\opP})$, its resolvent $(\Id + \opP^{-1}\circ G)^{-1}$ is single-valued and $\frac{1}{2}$-averaged nonexpansive over $(\spH,\ip{\cdot}{\cdot}_{\opP},\norm{\cdot}_{\opP})$ \cite[Corollary 23.9]{CAaMOTiH}, and thus $T$ can be expressed as $T= (\Id + \opP^{-1}\circ G)^{-1} \circ (\Id - \opP^{-1}\circ F)$ by \eqref{eq:inclusion-id-P-comp-F}.
With $\tau>\frac{1}{2\rho}$ and \eqref{eq:positiveness-of-Schur-P-by-norm}, we get $\theta<2$ by \eqref{eq:cond-sigma-tau} and 
\begin{align}
  &\sigma > \mu\norm{\opL^*\opL +\opC^*\opC}_{\mathrm{op}}
  + \frac{2\rho\mu^2\norm{B^*B\opL}^2_{\mathrm{op}}+\tau}{2\rho\tau - 1} \\
  \iff & 
    \left(2\rho\tau - 1\right) \sigma > (2\rho\tau - 1)\mu\norm{\opL^*\opL +\opC^*\opC}_{\mathrm{op}}+ \tau +2\rho\mu^2\norm{B^*B\opL}^2_{\mathrm{op}}
  \\
  \iff &\sigma + \tau  - \mu\norm{\opL^*\opL +\opC^*\opC}_{\mathrm{op}} < 2\rho(\sigma \tau - \tau\mu\norm{\opL^*\opL +\opC^*\opC}_{\mathrm{op}} - \mu^2 \norm{B^* B \opL}_{\mathrm{op}}^2)
  \overset{\eqref{eq:positiveness-of-Schur-P-by-norm}}
  {\iff}
  \theta < 2.
  \label{eq:theta-less-than}
\end{align}
Since $\opP^{-1}\circ F$ is $\frac{1}{\theta}$-cocoercive over $(\spH,\ip{\cdot}{\cdot}_{\opP},\norm{\cdot}_{\opP})$ and $ 1<\frac{2}{\theta}$,
$(\Id - \opP^{-1}\circ F)$ is $\frac{\theta}{2}$-averaged nonexpansive over $(\spH,\ip{\cdot}{\cdot}_{\opP},\norm{\cdot}_{\opP})$ \cite[Proposition 4.39]{CAaMOTiH}.
Therefore, by Fact \ref{fact:composition-averaged}, we conclude that 
$T= (\Id + \opP^{-1}\circ G)^{-1} \circ (\Id - \opP^{-1}\circ F)$ is $\frac{2}{4-\theta}$-averaged nonexpansive over $(\spH,\ip{\cdot}{\cdot}_{\opP},\norm{\cdot}_{\opP})$.

\noindent
\textbf{(Proof of (d))} 
Thanks to Fact \ref{fact:KM-itr}, $(h_k)_{k\in\setN}$ is guaranteed to converge to $\bar{h}\in \fix T$ (Note: Since $\spH$ is finite-dimensional, $\norm{\cdot}_{\spH}$ and $\norm{\cdot}_{\opP}$ are equivalent).
Furthermore, from (b), we conclude that
$x_k=\Xi(h_k)$ converges to $\Xi(\bar{h})\in\setS$.

\setcounter{appnum}{6}
\setcounter{lemma}{0}
\setcounter{fact}{0}
\section{Proof of Proposition \ref{prop:separable-extension}}
\label{appendix:proof-separable-extension}
\sloppy
\textbf{(Proof of (a))}
Choose arbitrarily a twice continuously differentiable convex function $r\in\Gamma_0(\setR)$ satisfying \eqref{eq:cond-r}.
From the definition of $\widetilde{\fBeforeExtension}_i(\cdot;r)$ in \eqref{eq:def-ftilde-i-r},
for every $i\in\{1,2,\ldots,m\}$, $\widetilde{\fBeforeExtension}_i$ is twice continuously differentiable over $\setR$ with the gradient
\begin{equation}
  \label{eq:gradient-f-i-r}
  \widetilde{\fBeforeExtension}_i'(t;r) = \begin{cases}
    \fBeforeExtension_i''(l_i)(t - l_i) + \fBeforeExtension'(l_i) - r'(-t+l_i), & t\leq l_i\\
    \fBeforeExtension_i'(t), & l_i< t < h_i\\
    \fBeforeExtension_i''(h_i)(t - h_i) + \fBeforeExtension'(h_i) + r'(t-h_i), & t\geq h_i
  \end{cases}
\end{equation}
and the Hessian 
\begin{equation}
  \label{eq:hessian-f-i-r}
  \widetilde{\fBeforeExtension}_i''(t;r) = \begin{cases}
    \fBeforeExtension_i''(l_i) + r''(-t+l_i), & t\leq l_i\\
    \fBeforeExtension_i''(t), & l_i< t < h_i\\
    \fBeforeExtension_i''(h_i) + r''(t-h_i), & t\geq h_i.
  \end{cases}
\end{equation}
Hence, $\widetilde{\fBeforeExtension}(\cdot;r)$ in \eqref{eq:def-ftilde-r} is also twice continuously differentiable over $\spY$.
From \eqref{eq:hessian-f-i-r}, \eqref{eq:assumption-hessian-sup} and $\sup_{t\in\setR} r''(t)<\infty$ in \eqref{eq:cond-r},
we have 
\begin{equation}
  (\forall i \in\{1,2,\ldots,m\})\quad
  \sup_{t\in\setR} \widetilde{\fBeforeExtension}_i''(t;r) \leq \sup_{t\in\Pi_i} \fBeforeExtension_i''(t) +
  \sup_{t\in\setR} r''(t)  < \infty.
\end{equation}
Therefore, we obtain $ (\forall (u_1, u_2)\in\spY\times \spY)$
\begin{align}
  \norm{\nabla \widetilde{\fBeforeExtension}(u_1;r) - \nabla \widetilde{\fBeforeExtension}(u_2;r)}^2_\spY
  = &\sum_{i=1}^m \left(\widetilde{\fBeforeExtension}_i'([u_1]_i;r) - \widetilde{\fBeforeExtension}_i'([u_2]_i;r)\right)^2
  \label{eq:proof-separable-extension-1}
  \overset{\mbox{Fact \ref{fact:hessian-and-lipconst}}}{\leq} \sum_{i=1}^m \left(\sup_{t\in\setR} \widetilde{\fBeforeExtension}_i''(t;r)\right)^2 ([u_1]_i - [u_2]_i)^2\\
  \leq &\sum_{i=1}^m \left(\sup_{t\in\Pi_i} \fBeforeExtension_i''(t) +
  \sup_{t\in\setR} r''(t)\right)^2 ([u_1]_i - [u_2]_i)^2\\
  \leq &\underbrace{\left(
    \max\left\{
          \sup_{t\in \Pi_1} \fBeforeExtension_1''(t),\sup_{t\in \Pi_2} \fBeforeExtension_2''(t), \ldots , \sup_{t\in \Pi_m} \fBeforeExtension_m''(t)
          \right\}
          +  \sup_{t\in\setR} r''(t)
  \right)^2}_{=\lipconst{\nabla \widetilde{\fBeforeExtension}(\cdot;r)}^2}
          \sum_{i=1}^m ([u_1]_i - [u_2]_i)^2\\
  = &\lipconst{\nabla \widetilde{\fBeforeExtension}(\cdot;r)}^2\norm{u_1-u_2}^2_{\spY}.
\end{align}

\noindent
\textbf{(Proof of (b))}
From \eqref{eq:hessian-f-i-r} and \eqref{eq:cond-r}, we have
$
  (\forall i \in \{1,2,\ldots,m\})\
  \inf_{t\in\setR} \widetilde{\fBeforeExtension}''_i (t;r) = \inf_{t\in\Pi_i} \fBeforeExtension''_i (t),
$
which yields with \eqref{eq:def-lambda} 
\begin{equation}
  \label{eq:proof-separable-extension-2}
  (\forall 1\leq i \leq m) (\forall t\in\setR) \quad 
  \widetilde{\fBeforeExtension}_i''(t;r) \geq [\Lambda]_{i,i}\geq 0.
\end{equation}
Noting that $\nabla^2 \widetilde{\fBeforeExtension}(u;r)$ is diagonal and $[\nabla^2 \widetilde{\fBeforeExtension}(u;r)]_{i,i} = \widetilde{\fBeforeExtension}_i''([u]_i)$ holds for every $u\in\spY$, \eqref{eq:proof-separable-extension-2} implies the inequality
$
  (\forall u\in\spY) \ \nabla^2 \widetilde{\fBeforeExtension}(u;r)\succeq\Lambda.
$
Therefore, $\widetilde{\fBeforeExtension}(\cdot;r) - q_{\Lambda}\in\Gamma_0(\spX)$ is verified by 
\begin{align}
  (\forall u\in\spY) \quad \nabla^2\left(\widetilde{\fBeforeExtension}(\cdot;r) - q_{\Lambda}\right)(u) &=\nabla^2 \widetilde{\fBeforeExtension}(u;r) - \Lambda \succeq \zeroMatrix_{\spX}.
\end{align}

\noindent \textbf{(Proof of (c))} From the definition of $\Pi$ in Problem \ref{prob:relaxed-fidelity}, we have $x\in A^{-1}(\Pi)$ for every $x\in\opC^{-1}(\Cz)$. This implies that $Ax \in\Pi$ holds. From the definition of $\widetilde{\fBeforeExtension}_i$ and $\widetilde{\fBeforeExtension}$ in \eqref{eq:def-ftilde-i-r} and \eqref{eq:def-ftilde-r}, $\widetilde{\fBeforeExtension}(Ax;r) = \fBeforeExtension \circ A (x)$ is verified.  

\noindent \textbf{(Proof of (d))}
Assume that $\lim_{t\to +\infty}r(t)/t = \infty$ holds. Choose $i \in\{1,2,\ldots,m\}$ arbitrarily.
If $h_i < \infty$, we have
\begin{align}
  \lim_{t\to +\infty}\widetilde{\fBeforeExtension}_i(t;r) = &\lim_{t\to +\infty}\left[\frac{\fBeforeExtension_i''(h_i)}{2} (t -h_i)^2 + \fBeforeExtension_i'(h_i) (t-h_i) + \fBeforeExtension_i(h_i) +r(t-h_i)\right]\\
  \label{eq:proof-separable-extension-3}
  \geq &\lim_{t\to +\infty}\left[\fBeforeExtension_i'(h_i) (t-h_i) + \fBeforeExtension_i(h_i) +r(t-h_i)\right]\\
  = & \lim_{t\to +\infty}t\left[\fBeforeExtension_i'(h_i) \left(1-\frac{h_i}{t}\right) + \frac{\fBeforeExtension_i(h_i)}{t} +\frac{r(t-h_i)}{t}\right] = \infty,
\end{align}
where we used $\fBeforeExtension_i''(h_i)\geq 0$ (see \eqref{eq:def-lambda}) to derive \eqref{eq:proof-separable-extension-3} and the assumption $\lim_{t\to +\infty}r(t)/t = \infty$ to derive the last equation.
Hence, we have 
\begin{equation}
  \label{eq:proof-seperable-extension-4}
  \lim_{t\to +\infty} \widetilde{\fBeforeExtension}_i(t;r) = 
  \begin{cases}
    \infty, & h_i<\infty\\
    \lim_{t\to +\infty} \fBeforeExtension_i(t), & h_i = \infty.  
  \end{cases}
\end{equation}
With a similar observation, we also obtain
\begin{equation}
  \label{eq:proof-separable-extension-5}
  \lim_{t\to-\infty} \widetilde{\fBeforeExtension}_i(t;r)= 
  \begin{cases}
    \infty, & l_i>-\infty\\
    \lim_{t\to-\infty} \fBeforeExtension_i(t), & l_i = -\infty.  
  \end{cases}
\end{equation}
If $\fBeforeExtension$ is coercive, then $\lim_{|t|\to +\infty} \widetilde{\fBeforeExtension}_i (t;r) = \infty$ follows from \eqref{eq:proof-seperable-extension-4} and \eqref{eq:proof-separable-extension-5}, and thus $\widetilde{\fBeforeExtension}$ is also coercive.
Similarly, if $\fBeforeExtension$ is bounded below, then $\lim_{t\to +\infty} \widetilde{\fBeforeExtension}_i (t;r) > -\infty$ and $\lim_{t\to -\infty} \widetilde{\fBeforeExtension}_i (t;r) > -\infty$ follow from \eqref{eq:proof-seperable-extension-4} and \eqref{eq:proof-separable-extension-5}, and thus $f$ is also bounded below.

\setcounter{appnum}{7}
\setcounter{lemma}{0}
\setcounter{fact}{0}
\section{Proof of Corollary \ref{corollary:existence-minimizer-extension}}
\label{appendix:proof-reformulation}

\noindent \textbf{(Proof of (a))} See Proposition \ref{prop:separable-extension}(a).
\noindent 
\textbf{(Proof of (b))}  The coincidence of solution sets of \eqref{eq:before-reformulated-model} and \eqref{eq:reformulated-model} follows from $(\forall x\in\opC^{-1}(\Cz)) \ \fBeforeExtension \circ A(x) = \widetilde{\fBeforeExtension}\circ A(x)$ by Proposition \ref{prop:separable-extension}(c).
Moreover, from (a), the model \eqref{eq:reformulated-model} is an instance of Problem \ref{prob:cLiGME-w-general-fidelity}.
\noindent \textbf{(Proof of (c))} Since $\widetilde{\fBeforeExtension}$ is $1$-strongly convex relative to $q_{\Lambda}$ with $\Lambda$ in \eqref{eq:def-lambda}, it follows from $(C_0)\implies (C_1)\implies (C_2)$ in Proposition~\ref{prop:overall-convexity}(c).

  \noindent\textbf{(Proof of (d))}
  Let $\widetilde{r}\in\Gamma_0(\setR)$ satisfy the condition (4.5) and $\lim_{t\to\infty}\widetilde{r}(t)/t=\infty$ (see Footnote \ref{footnote:existence-r} in Proposition \ref{prop:separable-extension}(d) for an example of $\widetilde{r}$). It suffices for ensuring the existence of a minimizer of the models \eqref{eq:before-reformulated-model} and \eqref{eq:reformulated-model} to show the existence of a minimizer of another model:
  \begin{equation}
    \label{eq:another-reformulated}
    \minimize_{\opC x\in\Cz} \widetilde{\fBeforeExtension}(\cdot;\widetilde{r})\circ A(x) + \mu \Psi_B\circ\opL (x)
  \end{equation}
  because $(\forall x \in \opC^{-1}(\Cz)) \widetilde{\fBeforeExtension}(Ax;\widetilde{r}) = \fBeforeExtension \circ A (x) = \widetilde{\fBeforeExtension}\circ A(x)$ holds by Proposition \ref{prop:separable-extension}(c). 
  By Proposition \ref{prop:separable-extension}(d),
  $\widetilde{\fBeforeExtension}(\cdot;\widetilde{r})$ is coercive (bounded below) if $\fBeforeExtension$ is coercive (bounded below). 
  Therefore, if one of the conditions (i)-(iv) in Corollary~\ref{corollary:existence-minimizer-extension}(d) holds, Theorem \ref{thm:existence-solution} verifies the existence of a minimizer of the model \eqref{eq:another-reformulated}.

  \noindent\textbf{(Proof of (e))}
  Recall that the model \eqref{eq:reformulated-model} is an instance of Problem \ref{prob:cLiGME-w-general-fidelity}. Moreover, Assumption \ref{assumption:alg}(i) holds by (b), Assumption \ref{assumption:alg}(ii) holds by (c) and the assumption (i) in (e), and Assumption \ref{assumption:alg}(iii)-(iv) hold respectively by the assumptions (ii)-(iii) in (e). Therefore, (e) is verified by Theorem \ref{thm:convergence-analysis}.

\setcounter{appnum}{8}
\setcounter{lemma}{0}
\setcounter{fact}{0}
\section{Proof of Lemma \ref{lemma:monotonicity-declipping}}
\label{appendix:proof-clipping-hessian}
The expressions of first-order and second-order derivatives in \eqref{eq:clip-gradient} and \eqref{eq:hessian-clipping-positive} are given in \cite[Proof of Theorem 1]{banerjee2024}.
Let $\mathcal{R}:\setR\to\setR$ be the inverse of  Mill's ratio 
$
  \mathcal{R}(t)\coloneqq \frac{\exp\left(-\frac{t^2}{2}\right)}{\int_{t}^{\infty} \exp\left(-\frac{\alpha^2}{2}\right)\mathrm{d}\alpha}.
$
Then we have 
\begin{equation}
-\frac{p(t)}{\operatorname{Pr}(t)}
  = - \frac{\exp\left(-\frac{t^2}{2s^2}\right)}{\int_{-\infty}^{t} \exp\left(-\frac{\alpha^2}{2s^2}\right) \mathrm{d}\alpha}
  \overset{\widetilde{\alpha}=\frac{\alpha}{s}}{=} -\frac{\exp\left(-\frac{1}{2}(\frac{-t}{s})^2\right)}{s\int_{\frac{-t}{s}}^{\infty} \exp\left(-\frac{\widetilde{\alpha}^2}{2}\right) \mathrm{d}\widetilde{\alpha}}
  =- \frac{1}{s}\mathcal{R}\left(\frac{-t}{s}\right)
\end{equation}
and
$
\left(-\frac{p(\cdot)}{\operatorname{Pr}(\cdot)}\right)''(t) = -\frac{1}{s^3}\mathcal{R}''\left(\frac{-t}{s}\right).
$
Since $(\forall t \in\setR)\ \mathcal{R}''(t)>0$
holds from \cite[(4)]{sampford1953}, we have the inequality $(\forall t\in\setR)\ \left(-\frac{p(\cdot)}{\operatorname{Pr}(\cdot)}\right)''(t)<0$, which implies that $\left(-\frac{p(\cdot)}{\operatorname{Pr}(\cdot)}\right)'$ in \eqref{eq:hessian-clipping-positive} is monotonically decreasing over $\setR$.
Hence, we obtain 
\begin{align}
  \sup_{t\in\Pi_i}{ \fBeforeExtension_i^{\lrangle{\mathrm{clip}}}}''(t) &=
\begin{cases}
      \sup_{t\in\setR}\frac{1}{s^2},  &i\in \mathbb{S}_{uc}\\
      \sup_{t\in [ \vartheta - \varpi , \infty)}\left(-\frac{p(\cdot)}{\operatorname{Pr}(\cdot)}\right)'(t - \vartheta), &i\in \mathbb{S}_{+}\\
      \sup_{t\in (-\infty, -\vartheta + \varpi]}
      \left(-\frac{p(\cdot)}{\operatorname{Pr}(\cdot)}\right)'(-t - \vartheta), &i\in \mathbb{S}_{-}
      \end{cases}\\
      &=
\begin{cases}
      \frac{1}{s^2},  &i\in \mathbb{S}_{uc}\\
      \left(-\frac{p(\cdot)}{\operatorname{Pr}(\cdot)}\right)'(-\varpi), &i\in \mathbb{S}_{+}\cap\mathbb{S}_{-}.
      \end{cases}
\end{align}
Moreover, from $\lim_{t\to \infty} t p(t)=0$, $\lim_{t\to\infty} \operatorname{Pr}(t)=1$ and $\lim_{t\to\infty} p(t)=0$, we obtain
$
  \lim_{t\to\infty}\left(-\frac{p(\cdot)}{\operatorname{Pr}(\cdot)}\right)'(t)
  = \lim_{t\to\infty} \frac{p(t)}{\operatorname{Pr}(t)^2}\left( p(t) + \frac{t}{s^2} \operatorname{Pr}(t)\right)
  =\lim_{t\to\infty} \frac{t p(t)}{s^2\operatorname{Pr}(t)}
  =0
$.
Therefore, we have
\begin{align}
  \inf_{t\in\Pi_i}{ \fBeforeExtension_i^{\lrangle{\mathrm{clip}}}}''(t) &=
\begin{cases}
      \inf_{t\in\setR}\frac{1}{s^2},  &i\in \mathbb{S}_{uc}\\
      \inf_{t\in [ \vartheta - \varpi , \infty)}\left(-\frac{p(\cdot)}{\operatorname{Pr}(\cdot)}\right)'(t - \vartheta), &i\in \mathbb{S}_{+}\\
      \inf_{t\in (-\infty, -\vartheta + \varpi]}
      \left(-\frac{p(\cdot)}{\operatorname{Pr}(\cdot)}\right)'(-t - \vartheta), &i\in \mathbb{S}_{-}
      \end{cases}\\
      &=
  \begin{cases}
      \frac{1}{s^2},  &i\in \mathbb{S}_{uc}\\
      \lim_{t\to\infty}\left(-\frac{p(\cdot)}{\operatorname{Pr}(\cdot)}\right)'(t-\vartheta), &i\in \mathbb{S}_{+}\\
      \lim_{t\to-\infty}\left(-\frac{p(\cdot)}{\operatorname{Pr}(\cdot)}\right)'(-t-\vartheta), &i\in \mathbb{S}_{-}
    \end{cases}\\
      &=\begin{cases}
      \frac{1}{s^2},  &i\in \mathbb{S}_{uc}\\
      0, &i\in \mathbb{S}_{+}\cup \mathbb{S}_{-}.
    \end{cases}
\end{align}

\bibliographystyle{IEEEbib}
\bibliography{references}
\end{document}